\newcommand\EquiOneDimPoint[1][\relax]{{\text{%
  \tikz[baseline] { 
    \fill (0,0.7ex) circle (0.3ex); 
    \ifx#1\relax
    \else \node[anchor=south west,inner sep=0pt] at (2.1ex, 0)  {$ #1 $};
    \fi
  }}}
}
\newcommand\EquiOneDimEdge[1][\relax]{{\text{%
  \tikz[baseline] { 
    \draw[>=serif cm,<->, thick] (0,0.7ex) -- (2.0ex, 0.7ex); 
    \ifx#1\relax
    \else \node[anchor=south west,inner sep=0pt] at (2.1ex, 0)  {$ #1 $};
    \fi
  }}}
}
\newcommand\ifpdf
\newcommand\inputfigeps[1]{\ifpdf
\includegraphics{figures/#1.pdf}
\else
\includegraphics{figures/#1.eps}
\fi
}
\newcommand*{\vcenteredhbox}[1]{\begingroup
\setbox0=\hbox{#1}\parbox{\wd0}{\box0}\endgroup}
\DeclareMathOperator    \intr                   {int}
\DeclareMathOperator    \relint         {rel\,int}
\DeclareMathOperator    \verts          {vert}
\DeclareMathOperator    \Aff {Aff}  
\newcommand\ve{\relax} 
\newcommand{\old}[1]{{}}
\newcommand{\bb}{\mathbb}
\newcommand{\R}{\bb R}
\newcommand{\Q}{\bb Q}
\newcommand{\Z}{\bb Z}
\newcommand{\N}{\bb N}
\newcommand{\C}{\bb C}
\newcommand\st{\mid}
\newcommand\bigst{\mathrel{\big|}}
\newtheorem{theorem}{Theorem}[section]
\newtheorem{corollary}[theorem]{Corollary}
\newtheorem{lemma}[theorem]{Lemma}
\newtheorem{assumption}[theorem]{Assumption}
\theoremstyle{definition}
\newtheorem{definition}[theorem]{Definition}
\newtheorem{remark}[theorem]{Remark}
\def\th@claim{%
  \let\thm@indent\indent 
  \thm@headfont{\itshape}%
  \normalfont 
  \thm@preskip\topsep \divide\thm@preskip2
  \thm@postskip\thm@preskip
}
\renewenvironment{proof}[1][\proofname]{\par
  \pushQED{\qed}%
  \normalfont \topsep6\p@\@plus6\p@\relax
  \trivlist
  \item[\hskip\labelsep
        \itshape\bfseries
    #1\@addpunct{.}]\ignorespaces
}{%
  \popQED\endtrivlist\@endpefalse
}
\newcommand\Step[1]{\textbf{Step~#1.}
  \def\@currentlabel{#1}}
\newcommand{\bpi}{\bar \pi}
\newcommand{\I}{\mathcal{P}}
\renewcommand{\P}{\mathcal{P}}
\newcommand{\Iedge}[1][q]{\I_{#1,\edge}}
\newcommand{\Ipoint}[1][q]{\I_{#1,\point}}
\newcommand\edge{\EquiOneDimEdge}
\newcommand\point{\EquiOneDimPoint}
\newcommand{\E}{\mathcal{E}}
\newcommand{\G}{\mathcal{G}}
\renewcommand{\S}{\mathcal{S}}
\newcommand{\B}{B}
\def\st{\mid}
\newcommand\Tau{\Gamma^+}
\newcommand\EqClass[1]{[#1]} 
\let\bfseries=\undefined
\DeclareRobustCommand\bfseries
\renewcommand{\pod}[1]
{\allowbreak\mathchoice{\mkern18mu}{\mkern8mu}{\mkern8mu}{\mkern8mu}(#1)}
\title{Equivariant Perturbation in \\Gomory and Johnson's Infinite Group
  Problem.\\[1ex] I. The One-Dimensional Case}
\author{
Amitabh Basu\thanks{Dept.\ of Mathematics, University of California, Davis, 
\texttt{abasu@math.ucdavis.edu}} \and
Robert Hildebrand\thanks{Dept.\ of Mathematics, University of California, Davis,
\texttt{rhildebrand@math.ucdavis.edu}} \and
Matthias K\"oppe\thanks{Dept.\ of Mathematics, University of California, Davis, 
\texttt{mkoeppe@math.ucdavis.edu} }
}
\date{\today}
\begin{document}

\maketitle
\begin{abstract}
We give an algorithm for testing the extremality of minimal valid functions
for Gomory and Johnson's infinite group problem that are  piecewise linear
(possibly discontinuous) with rational breakpoints. This is the first set of
necessary and sufficient conditions that can be tested algorithmically for
deciding extremality in this important class of minimal valid functions.  
We also present an extreme function that is a piecewise linear function with
some irrational breakpoints, whose extremality follows from a new principle.
\end{abstract}
\section{Introduction}

\paragraph{Cutting planes for mixed integer optimization.}

Cutting planes are a key ingredient in branch and cut algorithms, the
state-of-the-art technology for solving mixed integer optimization problems.
Strong cutting planes for \emph{combinatorial} optimization problems
arise from detailed studies of the convex geometry and polyhedral
combinatorics of the problem, more specifically of the convex hull of the
incidence vectors of feasible solutions.  Thousands of research papers giving
sophisticated problem-specific cutting planes (for instance, most famously,
for the Traveling Salesperson Problem) have appeared since the early 1980s. 

In contrast, the state-of-the-art solvers (both commercial and academic) for
general mixed integer optimization problems are based on extremely simple
principles of generating cutting planes that go back to the 1960s, but whose
numerical effectiveness has only been discovered in the mid-1990s
\cite{balas96gomory}.   

The optimal simplex tableau, describing an optimal solution to
the continuous relaxation of an integer optimization problem
\begin{equation}
  \label{eq:orig-ip}
  \max \{ \, \ve c \cdot\ve x \st A \ve x = \ve b,\ \ve x\in\Z^n_+ \,\}
\end{equation}
takes the form
\begin{equation}
  \ve x_B = A_B^{-1} \ve b\, +\, (- A_B^{-1} A_N) \ve x_N, \quad \ve x_B
  \in \Z_+^B,\ \ve x_N \in \Z_+^N, 
\end{equation}
where the subscripts $B$ and $N$ denote the basic and non-basic parts of the
solution~$\ve x$ and matrix~$A$, respectively. 
The widely used, numerically effective general-purpose cuts such as Gomory's
mixed integer (GMI) cut \citep{gom} are derived by simple integer rounding
principles from a \emph{single row}, corresponding to some basic
variable~$x_i$, of the tableau: 
$$ x_i = -f_i + \sum_{j\in N} r_j x_j, \quad x_i \in \Z_+,\ \ve x_N \in
\Z_+^N.  $$
For example, if a tableau row reads
\begin{displaymath}
  x_1 = -(-\tfrac45) - \tfrac15 x_2 + \tfrac25 x_3 + \tfrac{11}5 x_4,
\end{displaymath}
we can use the GMI formula, which describes a periodic, piecewise linear
function $\pi_{f_i}\colon \R\to\R$, to determine the coefficients of a cutting
plane, one-by-one:
\begin{displaymath}
  \pi_{f_i}(-\tfrac15) = \tfrac14,\quad \pi_{f_i}(\tfrac25) = \tfrac34, \quad \pi_{f_i}(\tfrac{11}5)
  = 1.
\end{displaymath}
Thus, we obtain the (very strong) cutting plane
\begin{equation}\label{eq:example-gmi-cut}
  \tfrac14 x_2 + \tfrac34 x_3 + x_4 \geq 1.
\end{equation}

Unfortunately, the performance of cutting-plane algorithms has stagnated since
the computational breakthroughs of the late 1990s and early 2000s.  

\paragraph{The quest of the effective multi-row cut.}

To meet the challenges of ever more demanding applications, it is necessary to
make effective use of information from several rows of the tableau for
generating cuts.  Finding such effective \emph{multi-row cuts} is one of the most
important open questions in cutting plane theory and in practical mixed-integer
linear optimization. 

The past seven years have seen a revival of so-called \emph{intersection
  cuts}, originally introduced by Balas \cite{bal} in 1971.  This research
trend was started by Andersen et al.\ \cite{alww}, who considered the
relaxation 
$$ \ve x_B = A_B^{-1} \ve b\, +\, (- A_B^{-1} A_N) \ve x_N, \quad \ve x_B \in
{\Z^B},\ \ve x_N \in \R_+^N. $$
In this relaxation, 
\begin{itemize}
\item the basic variables $\ve x_B$ are not restricted to be non-negative, but
  are still required to be integers;
\item the non-basic variables $\ve x_N$ are restricted to be non-negative, but
  are no longer required to be integers.
\end{itemize}
This setup, in which maximal lattice-free convex bodies play a central role,
can be studied using the classical tools of convex geometry and the Geometry
of Numbers and has proved to be a highly fruitful research direction \cite{DBLP:journals/siamjo/AndersenWW09,bbcm,bccz}.
Unfortunately, the recent numerical studies of cutting planes based on these
techniques have been disappointing; only marginal improvements upon the
standard GMI cuts have been obtained \cite{lp}.

\paragraph{Gomory's relaxations revisited.}

We study a different relaxation, the \emph{infinite group
  problem}, which goes back to ``classic'' work by Gomory \cite{gom} in
the 1960s and Gomory--Johnson \cite{infinite, infinite2} in the 1970s.  
It is an elegant infinite dimensional generalization of earlier concepts, 
Gomory's \emph{finite group relaxation} and the closely associated \emph{corner
polyhedron}~\cite{gom}.  Both the finite and the infinite group
problem have played a very important role in the theory of deriving valid
cutting planes for integer programming problems, and thus in the foundational
aspects of integer 
programming.  The problems have attracted renewed attention in the past
decade, with several
recent papers discovering very intriguing structures in these problems, and
connecting with some deep and beautiful areas of mathematics~\cite{kianfar3,
  bccz2, bccz08222222, bhkm, BorCor, cm, 3slope, dey1, dey2, deyRichard,
  tspace, kianfar1, kianfar2}. There remain many significant open
problems which provide fertile grounds for future research.  
A more detailed discussion of the importance of the infinite group problem can be
found in the recent survey by Conforti, Cornu\'ejols and
Zambelli~\cite{corner_survey}.\smallbreak

Gomory's group relaxation is defined as 
\begin{equation}
  \label{eq:group-relax}
  \ve x_B = A_B^{-1} \ve b\, +\, (- A_B^{-1} A_N) \ve x_N, \quad \ve x_B \in
  {\Z^B},\ \ve x_N \in \Z_+^N. 
\end{equation}

This relaxation is stronger than the one by Andersen et al., since the
non-basic variables are required to be non-negative \emph{integers}.

Instead of the full tableau, one can again study just a single row of the
tableau, or a few rows.  In the numerical example above, a single-row group
relaxation reads:
\begin{displaymath}
  x_1 = -(-\tfrac45) - \tfrac15 x_2 + \tfrac25 x_3 + \tfrac{11}5 x_4,\quad
  x_1\in\Z, \ x_2,x_3,x_4\in\Z_+.
\end{displaymath}
This equation can be equivalently written as a group
equation in $\R/\Z$ by reading the equation modulo 1; the variable~$x_1$
disappears in this way:
\begin{displaymath}
  0 \equiv -\tfrac15 + \tfrac45 x_2 + \tfrac25 x_3 + \tfrac{1}5 x_4\pmod1,\ x_2,x_3,x_4\in\Z_+.
\end{displaymath}
The example, arising from a basis of determinant $q=5$ and all-integer problem
data, has tableau data that are multiples of $\frac1q=\frac15$.  By
renaming 
\begin{displaymath}
  x_2 = s(\tfrac45), \quad x_3 = s(\tfrac25),\quad x_4=s(\tfrac15)
\end{displaymath}
and introducing extra variables $s(\frac05)$, $s(\frac35)$ for \emph{every}
possible coefficient that is a multiple of~$\frac1q$, we obtain the
\emph{finite master group relaxation}
\begin{displaymath}
  0 \equiv -\tfrac15 + \tfrac05\cdot s(\tfrac05) + \tfrac15\cdot s(\tfrac15)  + \tfrac25\cdot
  s(\tfrac25) + \tfrac35\cdot s(\tfrac35) + \tfrac45\cdot s(\tfrac45)  \pmod1,\
  s(\tfrac05),\dots,s(\tfrac45)\in\Z_+,
\end{displaymath}
which only depends on the value $f_i = \tfrac15$ and on $q=5$.  
Now we go one
step further and introduce infinitely many new variables $s(r)$ for every $r\in
\R$, viz., 
a function $s\colon \R\to\Z_+$, and obtain the
\emph{infinite group relaxation}
\begin{displaymath}
  0 \equiv -\tfrac15 + \sum_{r \in \R} r \cdot s(r) \pmod1,\quad
  s\colon \R\to\Z_+ \text{ a function of finite support},
\end{displaymath}
which only depends on the value $f_i = \tfrac15$.\smallbreak

\paragraph{Formal definition of the problem.}
More formally, Gomory's group problem~\cite{gom} considers an abelian group $G$, written additively, and studies the set of functions $s \colon G \to \R$ satisfying the following constraints: 
                \begin{gather}
                        \sum_{r \in G} r s(r) \in f + S  \label{GP} \\
                        s(r) \in \mathbb{Z}_+ \ \ \textrm{for all $r \in G$}  \notag\\
                        s \textrm{ has finite support}, \notag
                \end{gather}
where $f$ is a given element in $G$, and $S$ is a subgroup of $G$ (not
necessarily of finite index in~$G$); so $f + S$ is the coset containing the element $f$. 

In particular, we are interested in studying the convex hull $R_f(G,S)$ of all
functions satisfying the constraints in~\eqref{GP}. Observe that $R_f(G,S)$ is
a convex subset of the (possibly infinite-dimensional) vector space
$\mathcal{V}$ of functions $s \colon G \to \R$ with finite support. 

An important case is the infinite group problem, where $G =\R^k$ is
taken to be the group of real vectors under addition, and $S= \Z^k$ is the subgroup of
the integral vectors.  
In this paper, we are considering the one-dimensional
case of the infinite group problem, i.e., $k=1$.  This is an important
stepping stone for the larger goal of effective multi-row cuts.  
We extend our results to the case $k=2$ in 
\cite{basu-hildebrand-koeppe:equivariant-2,bhk-IPCOext}.

\paragraph{Valid inequalities and valid functions.} Any linear inequality in $\mathcal{V}$ is given by a pair $(\pi, \alpha)$ where $\pi$ is a function $\pi\colon G \to \R$ (not necessarily of finite support) and $\alpha \in \R$. The linear inequality is then given by $\sum_{r \in G} \pi(r)s(r) \geq \alpha$; the left-hand side is a finite sum because $s$ has finite support. Such an inequality is called a {\em valid inequality} for $R_f(G,S)$ if $\sum_{r \in G} \pi(r)s(r) \geq \alpha$ for all $s \in R_f(G,S)$.

For historical and technical reasons, we concentrate on those valid
inequalities for which $\pi \geq 0$. This implies that we can choose, after a
scaling, $\alpha = 1$. Thus, we only focus on valid inequalities of the form
$\sum_{r \in G} \pi(r)s(r) \geq 1$ with $\pi \geq 0$. Such functions $\pi$
will be termed {\em valid functions} for $R_f(G,S)$. As pointed out in
\cite{corner_survey}, the non-negativity assumption in the definition of a
valid function might seem artificial at first. Although there exist valid
inequalities $\sum_{r \in \R} \pi(r)s(r) \geq \alpha$ 
for $R_f(\R,\Z)$ such that $\pi(r) < 0$ for some~$r \in \R$, it can be shown that $\pi$ must be non-negative over all \emph{rational} $r \in \Q$. Since data in integer programs are usually rational, it is natural to focus on non-negative valid functions. 

A valid function immediately gives the coefficients of a cutting plane for
Gomory's group relaxation~\eqref{eq:group-relax} and thus the original integer
optimization problem~\eqref{eq:orig-ip}, as the GMI function did in the
example of~\eqref{eq:example-gmi-cut}.

\paragraph{Minimal and extreme functions.} Gomory and Johnson~\cite{infinite,
  infinite2} defined a hierarchy on the set of valid functions, capturing the
strength of the corresponding valid inequalities, which we summarize now. A
valid function $\pi$ for $R_f(G,S)$ is said to be \emph{minimal} for
$R_f(G,S)$ if there is no valid function $\pi' \neq \pi$ such that $\pi'(r)
\le \pi(r)$ for all $r \in G$. 
It is known that for every valid function $\pi$
for $R_f(G,S)$, there exists a minimal valid function $\pi'$ such that $\pi'
\leq \pi$ (see~\cite{bhkm} for a proof in the case when $G=\R^k$,
$S=\Z^k$). 
Also minimal valid functions~$\pi$ satisfy $\pi(r) \leq 1$ for $r\in G$
\cite{corner_survey}. 
Since $s \in R_f(G,S)$ are always non-negative functions, minimal
functions clearly dominate valid functions that are not minimal, making the
latter redundant in the description of $R_f(G,S)$. 

A stronger notion is that
of an {\em extreme function}. A~valid function~$\pi$ is \emph{extreme} for
$R_f(G,S)$ if it cannot be written as a convex combination of two other valid
functions for $R_f(G,S)$, i.e., $\pi = \frac{1}{2}(\pi^1 + \pi^2)$
implies $\pi = \pi^1 = \pi^2$.  It is easy to verify that extreme functions
are minimal. Minimal functions for $R_f(G,S)$ were well characterized by
Gomory for groups $G$ such that $S$ has finite index in~$G$ in~\cite{gom}, and
later for $R_f(\R,\Z)$ by 
Gomory and Johnson~\cite{infinite}. We state these results in a unified
notation in the following theorem, which has the same proof as the theorem
in~\cite{infinite}. 

A function $\pi\colon G \rightarrow \mathbb{R}$ is \emph{subadditive} if
$\pi(x + y) \le \pi(x) + \pi(y)$ for all $x,y \in G$. We say that  $\pi$ is
\emph{symmetric} (or \emph{satisfies the symmetry condition}) if $\pi(x) + \pi(f - x) = 1$ for all $x \in G$.

\begin{theorem}[Gomory \cite{gom}, Gomory and Johnson \cite{infinite}] \label{thm:minimal} Let
  $\pi \colon G \rightarrow \mathbb{R}$ be a non-negative function. Then $\pi$
  is a minimal valid function for $R_f(G,S)$ if and only if $\pi(r) = 0$ for
  all $r\in S$, $\pi$ is subadditive, and $\pi$ satisfies the symmetry
  condition. The first two conditions imply that $\pi$ is periodic modulo~$S$
  (i.e., constant over any coset of $S$).
\end{theorem}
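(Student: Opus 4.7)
\emph{Proof strategy.} I would prove both implications of the biconditional and derive the parenthetical periodicity statement as a common lemma. The key auxiliary fact is that if $\pi(r) = 0$ for all $r \in S$ and $\pi$ is subadditive, then $\pi$ is constant on every coset of $S$: for any $w \in S$, subadditivity gives $\pi(x + w) \leq \pi(x) + \pi(w) = \pi(x)$, and symmetrically $\pi(x) \leq \pi(x + w) + \pi(-w) = \pi(x + w)$, using $-w \in S$. This lemma is used in both directions and also establishes the final claim in the theorem.

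For the sufficiency direction, I would assume the three conditions and verify validity by iterated subadditivity: for $s \in R_f(G,S)$, writing $\sum_r r\, s(r) = f + w$ with $w \in S$,
\begin{equation*}
  \sum_{r \in G} \pi(r)\, s(r) \;\geq\; \pi\Bigl(\sum_{r \in G} r\, s(r)\Bigr) \;=\; \pi(f + w) \;=\; \pi(f) \;=\; 1,
\end{equation*}
where I use periodicity for the equality $\pi(f + w) = \pi(f)$, and symmetry evaluated at $x = 0$ together with $\pi(0) = 0$ for $\pi(f) = 1$. For minimality, given any valid $\pi' \leq \pi$ and any $r_0 \in G$, the assignment $s(r_0) = s(f - r_0) = 1$ (which becomes $s(r_0) = 2$ if $2 r_0 \in f + S$) lies in $R_f(G,S)$, so validity of $\pi'$ yields $\pi'(r_0) + \pi'(f - r_0) \geq 1$; combining this with $\pi' \leq \pi$ and the symmetry equality forces $\pi'(r_0) = \pi(r_0)$, so $\pi' = \pi$.

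For the necessity direction, I would argue contrapositively on each condition, in each case constructing a valid function $\pi' \lneq \pi$ dominated by $\pi$, contradicting minimality. To kill a positive value $\pi(r) > 0$ at some $r \in S$, set $\pi'(r) := 0$ and $\pi' := \pi$ elsewhere; any feasible $s$ with $s(r) > 0$ can be replaced by $s'$ with $s'(r) = 0$ without leaving $R_f(G,S)$ (since removing an integer multiple of $r \in S$ preserves the coset), and the direct identity $\sum \pi'(x) s(x) = \sum \pi(x) s'(x) \geq 1$ establishes validity of $\pi'$. To restore subadditivity, if $\pi(x_1 + x_2) > \pi(x_1) + \pi(x_2)$, I redefine $\pi'(x_1 + x_2) := \pi(x_1) + \pi(x_2)$ and $\pi' := \pi$ elsewhere, justifying validity by splitting the mass at $x_1 + x_2$ into equal parts at $x_1$ and $x_2$, which preserves both feasibility and the value of the relevant sum. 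The symmetry step is the most delicate and will be the main obstacle: feasibility of $s(x) = s(f - x) = 1$ immediately gives $\pi(x) + \pi(f - x) \geq 1$; to obtain the reverse inequality I would consider $\pi'(x) := \min\{\pi(x),\, 1 - \pi(f - x)\}$, which is pointwise $\leq \pi$, and show this function remains valid. I expect the validity check to require a careful case analysis on the support of any feasible $s$, using the already-established $\pi(f) = 1$ and subadditivity to pair up the mass so that each pair contributes at least $1$ to $\sum \pi'(r) s(r)$. Once validity of $\pi'$ is in hand, minimality forces $\pi'(x) = \pi(x)$, hence $\pi(x) \leq 1 - \pi(f - x)$, which combined with the previously shown reverse inequality yields the symmetry condition.
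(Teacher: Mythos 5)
The paper does not actually prove this theorem: it states it in unified notation and explicitly defers to Gomory \cite{gom} and Gomory--Johnson \cite{infinite} (``which has the same proof as the theorem in \cite{infinite}''), so your proposal has to be judged against that classical argument. Your periodicity lemma, the sufficiency direction (iterated subadditivity for validity, the two-point solutions $s(r_0)=s(f-r_0)=1$ for minimality), and the necessity of $\pi|_S=0$ and of subadditivity are correct and are essentially the classical steps. (One wording fix: in the subadditivity step each unit of mass at $x_1+x_2$ must be replaced by one unit at $x_1$ \emph{and} one unit at $x_2$; literally ``splitting into equal parts'' would destroy feasibility and integrality, though your stated conclusion shows you intend the correct move.)

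The genuine gap is the symmetry step, and it is not merely ``delicate'': your candidate $\pi'(x)=\min\{\pi(x),\,1-\pi(f-x)\}$ is \emph{never} valid in the only case that matters. If symmetry fails at some $\bar x$, i.e.\ $\pi(\bar x)+\pi(f-\bar x)>1$ (recall $\geq 1$ always holds), then $1-\pi(f-\bar x)<\pi(\bar x)$ and $1-\pi(\bar x)<\pi(f-\bar x)$, so $\pi'(\bar x)+\pi'(f-\bar x)=2-\pi(\bar x)-\pi(f-\bar x)<1$, and the feasible solution $s(\bar x)=s(f-\bar x)=1$ already violates $\sum_r \pi'(r)s(r)\geq 1$. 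Thus $\pi'$ is valid exactly when $\pi$ is already symmetric, and the planned ``careful case analysis pairing up the mass'' cannot succeed --- each pair contributes exactly $2-\pi(x)-\pi(f-x)$, not at least $1$; the argument is circular. (Minimality of $\pi$ cannot rescue it either: minimality asserts the nonexistence of smaller valid functions and gives you no tool to certify validity of a specific smaller function.) The classical proof instead lowers $\pi$ only at the single point (or coset) where symmetry fails, e.g.\ $\pi'(\bar x):=1-\pi(f-\bar x)$ and $\pi'=\pi$ elsewhere, so that the paired two-point solutions still sum to exactly $1$; the substantive work, which your sketch does not supply, is then to verify validity for feasible solutions placing multiplicity at least $2$ on $\bar x$'s coset (for instance $s(\bar x)=n$ with $n\bar x\in f+S$), using subadditivity, periodicity, and $\pi\leq 1$ --- this is precisely where the content of the Gomory--Johnson argument lies.
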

        
\begin{remark} 
  Note that this implies that one can view a minimal
  valid function $\pi$ as a function from $G/S$ to $\R$, and thus
  studying $R_f(G,S)$ is the same as studying $R_f(G/S,0)$.  However, we avoid
  this viewpoint in this paper. 
\end{remark}

All classes of extreme functions described in the literature are piecewise
linear, with the exception of a family of measurable functions constructed
in~\cite{bccz08222222}.  
However, a tight characterization of extreme functions for $R_f(\R,\Z)$ has eluded
researchers for the past four decades now. 

\paragraph{Overview of our main results.}
In this paper, we give an algorithm
for deciding the extremality of piecewise linear functions with rational
breakpoints. This algorithm is then used to prove a simple necessary and
sufficient condition for the extremality of {\em continuous} piecewise linear
minimal functions. To the best of our knowledge, these two results are the
first of their kind; in comparison, the majority of results on extremality in
the literature are very specific sufficient conditions for guaranteeing
extremality~\cite{bhkm, 3slope, dey1, dey2, deyRichard,  tspace,
  Richard-Li-Miller-2009:Approximate-Liftings, Miller-Li-Richard2008}. Moreover, some of the more general sufficient conditions (see, for example, Theorem 6 in~\cite{dey1}) are not algorithmic in the sense that given a particular valid function, it is not possible to test the sufficient condition in a finite number of computations. We give more details below.

\old{Although our results only hold for testing the extremality of piecewise linear functions with rational breakpoints, we believe this is an important step in understanding extreme valid functions. Moreover, continuous piecewise linear functions with rational breakpoints, are arguably the most important class of valid functions, from a practical perspective. This is because we are ultimately interested in finite dimensional integer programs with rational data, and continuous piecewise linear functions with rational breakpoints are the most natural class of valid functions relevant to these programs. We now state our main results.}

For the infinite group problem $R_f(\R,\Z)$, by Theorem~\ref{thm:minimal} a  minimal valid function 
$\pi \colon \R \to \R_+$ is periodic modulo~$\Z$, i.e., periodic with period
$1$. 
We consider piecewise linear functions, possibly discontinuous, which 
have only finitely many breakpoints in the fundamental domain~$[0,1)$.

Our first main result is an algorithm for deciding if a given piecewise linear
function is extreme or not. The proof of this theorem appears in
section~\ref{section:proofs-main}.

\begin{theorem}\label{thm:main}
Consider the following problem.  
\begin{quote}
  Given a minimal 
  valid function $\pi$ for $R_f(\R,\Z)$ that is piecewise
  linear with a set of rational breakpoints with the least common
  denominator~$q$, decide if $\pi$ is extreme or not.
\end{quote}
There exists an algorithm for this problem that takes a number of elementary operations over the reals that is
bounded by a polynomial in $q$. 
\end{theorem}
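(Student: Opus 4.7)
The starting point is the standard perturbation characterization: $\pi$ is not extreme precisely when there exists a nonzero function $\bar\pi$ such that $\pi^1 := \pi + \bar\pi$ and $\pi^2 := \pi - \bar\pi$ are both minimal valid functions (after rescaling). Using Theorem~\ref{thm:minimal}, any such $\bar\pi$ must be periodic modulo $\Z$, satisfy $\bar\pi(0)=\bar\pi(f)=0$, and obey $\bar\pi(x)+\bar\pi(y)=\bar\pi(x+y)$ at every pair $(x,y)$ where $\pi$ is tight, i.e.\ $\pi(x)+\pi(y)=\pi(x+y)$. So the algorithm reduces to deciding, in time polynomial in $q$, whether the space of such admissible perturbations is trivial (equal to $\{0\}$).

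Since $\pi$ is piecewise linear with breakpoints in $\tfrac1q\Z$, the subadditivity slack $\Delta\pi(x,y) := \pi(x)+\pi(y)-\pi(x+y)$ is a piecewise linear function on the square $[0,1]^2$, whose natural polyhedral complex $\Delta\P_q$ has at most $O(q^2)$ cells. The tight region $E(\pi) = \{\Delta\pi = 0\}$ is the union of a subfamily of these cells. For a 2-dimensional face $F$ of $\Delta\P_q$ contained in $E(\pi)$, the functional equation $\bar\pi(x)+\bar\pi(y)=\bar\pi(x+y)$ holding on all of $F$ forces $\bar\pi$ to be affine on the three intervals given by the projections of $F$ onto the $x$-, $y$-, and $(x+y)$-axes, with compatible slopes — this is the source of the ``equivariance'' in the title. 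Collecting these restrictions across all 2-faces in $E(\pi)$ yields a finite, polynomial-size partition of $[0,1]$ into intervals on each of which $\bar\pi$ must be affine, and a set of affine equivariance (translation) relations between these intervals. Combining this with the discrete subadditivity relations at vertices of $\Delta\P_q$ and with the symmetry $\bar\pi(x)+\bar\pi(f-x)=0$ reduces the entire extremality question to a finite linear-algebra problem: the null space of a system of $\mathrm{poly}(q)$ linear equations in $\mathrm{poly}(q)$ unknowns (the values and slopes on each interval, with limit values tracked separately on each side of a breakpoint to accommodate discontinuities) must be trivial.

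The algorithm therefore proceeds in three steps. \textbf{Step 1.} Compute the polyhedral complex $\Delta\P_q$ and determine the cells on which $\Delta\pi$ vanishes identically (a finite vertex-enumeration task of size $\mathrm{poly}(q)$). \textbf{Step 2.} From the 2-faces in $E(\pi)$, extract the equivariance relations: a finite partition $\{I_1,\dots,I_N\}$ of $[0,1)$ with $N = \mathrm{poly}(q)$ together with affine maps identifying the restriction of $\bar\pi$ to various $I_j$, plus a linear constraint that $\bar\pi|_{I_j}$ is affine. \textbf{Step 3.} Assemble the resulting linear system — consisting of (i) the discrete additivity equations at triples of breakpoints, (ii) the affine constraints from Step~2, (iii) the symmetry condition, and (iv) the vanishing $\bar\pi(0)=\bar\pi(f)=0$ — and test whether it has only the trivial solution by Gaussian elimination; output ``extreme'' in that case, and ``not extreme'' otherwise. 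Each step runs in time polynomial in $q$.

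The main obstacle is Step 2, specifically a clean combinatorial handling of the equivariance relations in the presence of discontinuities: the limits $\pi(x^-)$ and $\pi(x^+)$ at a breakpoint interact differently with lower-dimensional (edge) and higher-dimensional (2-cell) tight faces, so one must carefully distinguish which limit values of $\bar\pi$ are coupled by each tight face. The right framework — which is where most of the real work of the paper lies — is to model these relations as an ``equivariance graph'' whose vertices are the open intervals $I_j$ augmented with their two endpoint limits, and whose edges record translation identifications coming from 2-dimensional tight faces, and then to deduce that extremality is equivalent to a rigidity property of this graph that can be checked by linear algebra in $\mathrm{poly}(q)$ operations.
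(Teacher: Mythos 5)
There is a genuine gap in the sufficiency direction of your reduction. You assert that ``collecting these restrictions across all 2-faces in $E(\pi)$ yields a finite, polynomial-size partition of $[0,1]$ into intervals on each of which $\bar\pi$ must be affine,'' but the Interval Lemma only forces $\bar\pi$ to be affine on the \emph{covered} intervals, i.e.\ the projections $p_1(F),p_2(F),p_3(F)$ of two-dimensional tight faces. In general not every interval of $\P_{\frac1q\Z}$ is covered, and affine linearity of $\bar\pi$ on the remaining intervals can only be propagated along one-dimensional tight faces (which give both translation \emph{and reflection} identifications, not just the translations you mention), and even then only within connected components of the resulting graph that contain a covered interval. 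On an interval whose component contains no covered interval, nothing forces $\bar\pi$ to be affine, to be piecewise linear, or to have breakpoints in your partition at all; consequently your finite system (whose unknowns are values and slopes on the $\frac1q$-intervals) can have a unique solution while $\pi$ is \emph{not} extreme, the non-extremality being witnessed only by a perturbation with strictly finer breakpoints. The rational variants of the construction in section~\ref{sec:irrational} (Theorem~\ref{thm:rational-non-extreme}) are exactly of this type: the slope-zero intervals are neither covered nor connected to covered intervals, and the function fails to be extreme because of a fine equivariant zigzag perturbation, not because of any kernel element supported on the $\frac1q$-grid. So the claimed equivalence ``extreme $\iff$ the $\mathrm{poly}(q)$ linear system has trivial null space'' is not justified by your argument and, for the system as you set it up, is false.

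What is missing is precisely the dichotomy that constitutes the main work of the paper: either every interval is covered or connected to a covered one, in which case $\pi$ is affine imposing in $\I_{q,\edge}$ (Lemma~\ref{lemma:patching-interval-lemma}, Lemma~\ref{lemma:pointLemma}, Theorem~\ref{theorem:AI}) and Corollary~\ref{corollary:iff} makes the linear system decisive; or some component contains no covered interval, in which case one must \emph{construct} an explicit perturbation proving non-extremality --- the $\Gamma$-equivariant function $\psi$ of Lemma~\ref{lemma:our-equivariant-psi} restricted to that component, with the $\epsilon/3$ scaling and the vertex-by-vertex verification of subadditivity and symmetry on $\Delta\P_{4q}$ (Lemma~\ref{lemma:not-extreme}). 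Because this perturbation is piecewise linear with breakpoints in $\frac1{4q}\Z$ (Corollary~\ref{corollary:AIG4}), the paper runs the linear system~\eqref{equation:system} on the refined grid $\B=\frac1{4q}\Z$, which is exactly what makes ``unique solution $\iff$ extreme'' correct (Theorem~\ref{thm:1/4q test}) and hence yields the $\mathrm{poly}(q)$ algorithm. Your final remark about an ``equivariance graph'' whose ``rigidity'' is equivalent to extremality names this issue but does not resolve it: you would still need to prove that failure of rigidity can be converted into an actual pair of minimal valid functions (which requires the equivariant construction and the minimality verification), and that rigidity plus a trivial null space rules out arbitrary, possibly non-piecewise-linear perturbations; and you would need the $\frac1{4q}$ refinement (or an equivalent device) for the linear-algebra test itself to be sound.
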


If we start with any piecewise linear valid function~$\pi$, the first step is to determine if
$\pi$ is minimal.  A minimality test for {\em continuous} piecewise linear
functions was given by Gomory and Johnson (see Theorem 7 in~\cite{tspace}).
In section~\ref{section:minimalityTest} we present a minimality test that
works for discontinuous functions too.\smallskip

Next we investigate the precise relationship between continuous extreme functions for the
infinite group problem and certain finite group problems, i.e., $R_f(G, \Z)$
where $G$ is a discrete subgroup of~$\R$ that contains~$\Z$.  (The word
``finite'' refers to the finite index of $\Z$ in $G$, in other words the
quotient group~$G/\Z$ is finite.) 
A first result in
this direction appeared in \cite{dey1}; we state it in our notation.  
\begin{theorem}[Theorem 6 in~\cite{dey1}]\label{thm:infinite_test}
Let $\pi$ be a
piecewise linear minimal valid function for $R_f(\R,\Z)$ with set $\B$ of rational
breakpoints with the least common denominator~$q$. 
Let $G_{2^n}, n \in \N$ denote the subgroups $ \frac{1}{2^nq} \Z$. Then
$\pi$ is extreme if and only if the restriction $\pi|_{G_{2^n}}$ is extreme
for $R_f(G_{2^n}, \Z)$ for all $n\in \N$.
\end{theorem}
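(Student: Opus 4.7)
The plan is to prove both directions by exploiting the key fact that $\pi$, being piecewise linear with all breakpoints in $\tfrac{1}{q}\Z \subseteq G_{2^n}$, is the piecewise linear interpolation of its own restriction $\pi|_{G_{2^n}}$ for every $n$. For any $\phi\colon G_{2^n}\to\R$ let $\tilde\phi\colon\R\to\R$ denote its piecewise linear interpolation, affine on each cell $[\tfrac{i}{2^nq},\tfrac{i+1}{2^nq}]$. Then $\pi = \widetilde{\pi|_{G_{2^n}}}$, and since the interpolation operator is linear, convex decompositions transport freely between the grid $G_{2^n}$ and the real line.

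For necessity, I would argue by contrapositive. Suppose for some $n$ we can write $\pi|_{G_{2^n}} = \tfrac{1}{2}(\phi^1+\phi^2)$ with distinct valid functions $\phi^1,\phi^2$ for $R_f(G_{2^n},\Z)$. Set $\pi^i := \tilde\phi^i$. Then $\pi^1 \neq \pi^2$ and $\pi = \tfrac{1}{2}(\pi^1+\pi^2)$ by linearity of the interpolant, so it suffices to check that each $\pi^i$ is a valid function for $R_f(\R,\Z)$, i.e., satisfies the Gomory--Johnson characterization of Theorem~\ref{thm:minimal}. Non-negativity and vanishing on $\Z$ are immediate from the interpolation; symmetry is preserved because $f\in\tfrac{1}{q}\Z \subseteq G_{2^n}$ so reflection about $f$ maps grid cells to grid cells. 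The essential step is to check that subadditivity of $\phi^i$ on the uniform lattice $G_{2^n}$ lifts to subadditivity of the interpolant on all of $\R$; this is a standard ``piecewise-linear subadditivity lifting'' argument, using that a sum $\tilde\phi^i(x)+\tilde\phi^i(y)-\tilde\phi^i(x+y)$ is piecewise bilinear in $(x,y)$ on the grid cells, hence its minimum over each cell is attained at a vertex in $G_{2^n}\times G_{2^n}$.

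For sufficiency, suppose every restriction $\pi|_{G_{2^n}}$ is extreme for $R_f(G_{2^n},\Z)$, and let $\pi = \tfrac{1}{2}(\pi^1+\pi^2)$ with $\pi^1,\pi^2$ valid for $R_f(\R,\Z)$. Restricting to $G_{2^n}$ produces a convex decomposition of an extreme function, which forces $\pi^i|_{G_{2^n}} = \pi|_{G_{2^n}}$ for every $n$. Therefore $\pi^1$ and $\pi^2$ agree with $\pi$ on the dense set $D = \bigcup_n G_{2^n} = \tfrac{1}{q}\Z\bigl[\tfrac{1}{2}\bigr]$.

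The main obstacle is promoting equality on the dense set $D$ to equality everywhere, since minimal valid functions need not be continuous. My approach is to use subadditivity of $\pi^i$ to control local oscillations: for any $x\in\R$ and $y\in D$ close to $x$, subadditivity gives $|\pi^i(x)-\pi^i(y)| \leq \max\{\pi^i(x-y),\pi^i(y-x)\}$. The identity $\pi^1+\pi^2=2\pi$ together with $\pi^{3-i}\geq 0$ yields $\pi^i \leq 2\pi$, and since $\pi$ is piecewise linear with $\pi(0)=0$, the right-hand sides tend to zero as $y\to x$ from within any linearity cell of $\pi$. This pins down $\pi^i=\pi$ on the interior of each linearity interval of $\pi$; the boundary breakpoints lie in $\tfrac{1}{q}\Z \subseteq D$ where equality already holds, with some extra care needed at jump discontinuities of $\pi$ to handle the corresponding one-sided limits.
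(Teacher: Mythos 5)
The paper never proves Theorem~\ref{thm:infinite_test}; it is quoted from \cite{dey1}, so the only internal benchmark is the toolkit used for the closely related Theorem~\ref{thm:finite_group}, and in spirit your plan matches it: interpolate a finite-group decomposition to get one for $R_f(\R,\Z)$, and use subadditivity together with $\pi^i\le 2\pi$ (the paper's Lemma~\ref{lem:lipschitz} and Theorem~\ref{Theorem:functionContinuous}) to upgrade agreement on a dense set to agreement everywhere. Still, there are two genuine gaps. In the necessity direction, non-extremality of $\pi|_{G_{2^n}}$ only hands you \emph{valid} functions $\phi^1,\phi^2$, yet your lifting argument immediately uses their subadditivity and symmetry. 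You must first note that $\pi|_{G_{2^n}}$ is itself minimal for $R_f(G_{2^n},\Z)$ (subadditivity, symmetry and vanishing on $\Z$ restrict from $\pi$, and Theorem~\ref{thm:minimal} applies to this group), and then invoke the finite-group analogue of Lemma~\ref{lem:tightness} to conclude that $\phi^1,\phi^2$ are minimal; only then is the vertex check meaningful. Also, $\tilde\phi^i(x)+\tilde\phi^i(y)-\tilde\phi^i(x+y)$ is not bilinear on a full grid square, because $x+y$ crosses a breakpoint inside it; the correct statement is that it is affine on each face of the complex $\Delta\P_\B$ with $\B=G_{2^n}$, whose vertices lie in $G_{2^n}\times G_{2^n}$, exactly as in Theorem~\ref{minimality-check}. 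These are repairable, but as written the step is not justified.

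The more serious issue is continuity. Your pivotal identity $\pi=\widetilde{\pi|_{G_{2^n}}}$ is false whenever $\pi$ is discontinuous (and the paper's notion of piecewise linear allows jumps), so the necessity direction collapses there and ``extra care at jump discontinuities'' cannot repair it: the interpolant of the restriction is simply a different function from $\pi$. Likewise, in the sufficiency direction the bound $\lvert\pi^i(x)-\pi^i(y)\rvert\le 2\max\{\pi(x-y),\pi(y-x)\}$ forces $\pi^i(y)\to\pi^i(x)$ only because $\pi(h)\to 0$ as $h\to 0$, i.e.\ because $\pi$ is continuous at the origin; this has nothing to do with which linearity cell $y$ approaches $x$ from, and it fails for minimal piecewise linear functions with a jump at $0$, where equality of $\pi^i$ and $\pi$ on the dense set $\bigcup_n G_{2^n}$ need not propagate. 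So what you actually have is a proof for \emph{continuous} $\pi$ --- which is the setting of \cite{dey1} and of the paper's Theorem~\ref{thm:finite_group} --- and you should either state that hypothesis or acknowledge that the discontinuous case needs an entirely different argument, which is what the machinery behind Theorem~\ref{thm:main} is built to supply.
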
 
Clearly the above condition cannot be checked in a finite number of steps and hence
cannot be converted into an algorithm, because it potentially needs to test
infinitely many finite group problems.
In contrast, we prove the following result in section~\ref{section:proofs-main}.
\begin{theorem}\label{thm:finite_group}
  Let $\pi$ be a \emph{continuous} piecewise linear minimal valid
  function for $R_f(\R,\Z)$ with set $\B$ of rational
  breakpoints with the least common denominator~$q$. 
  Let $\hat G = \smash[b]{\frac1{4q} \Z}$.  Then
  $\pi$ is extreme for $R_f(\R, \Z)$ if and only if $\pi|_{\hat G}$ is extreme
  for $R_f(\hat G,\Z)$. 
\end{theorem}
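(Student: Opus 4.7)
The plan is to prove the biconditional in two directions: the ``only if'' direction by an extension argument and the ``if'' direction by restriction combined with a perturbation analysis.

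\emph{Only if direction (contrapositive):} From a nontrivial decomposition $\pi|_{\hat G} = \tfrac{1}{2}(\pi^1_{\hat G} + \pi^2_{\hat G})$ into minimal valid functions for $R_f(\hat G,\Z)$, the plan is to extend each $\pi^i_{\hat G}$ to $\tilde\pi^i\colon \R \to \R$ by linear interpolation between consecutive points of $\hat G$. Since the breakpoints of $\pi$ lie in $\tfrac{1}{q}\Z \subset \hat G$, the piecewise-linear interpolation of $\pi|_{\hat G}$ recovers $\pi$ itself, so by linearity of interpolation $\pi = \tfrac{1}{2}(\tilde\pi^1 + \tilde\pi^2)$. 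The key lemma to establish is that $\tilde\pi^i$ is minimal valid for $R_f(\R,\Z)$. Nonnegativity, symmetry, and the zero values on $\Z$ transfer directly from $\pi^i_{\hat G}$ by linearity. Subadditivity follows because the function $\Phi^i(x,y) := \tilde\pi^i(x) + \tilde\pi^i(y) - \tilde\pi^i(x+y)$ is piecewise affine on the polyhedral complex in $\R^2$ cut out by the hyperplanes $\{x \in \hat G\}$, $\{y \in \hat G\}$, $\{x+y \in \hat G\}$; on each 2-cell it is affine, hence nonnegative on the cell iff nonnegative at each vertex; every vertex has both coordinates in $\hat G$, where nonnegativity reduces to the subadditivity of $\pi^i_{\hat G}$. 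This yields a nontrivial decomposition of $\pi$, contradicting its extremality for $R_f(\R,\Z)$.

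\emph{If direction:} Suppose $\pi = \tfrac{1}{2}(\pi^1 + \pi^2)$ with $\pi^1, \pi^2$ valid for $R_f(\R,\Z)$. Replacing $\pi^i$ by their minimal valid dominators if needed, we may assume they are minimal, so that their restrictions $\pi^i|_{\hat G}$ are minimal valid for $R_f(\hat G,\Z)$ (the restriction of a minimal function to a subgroup containing $\Z$ remains minimal, using Theorem~\ref{thm:minimal}). Extremality of $\pi|_{\hat G}$ forces $\pi^i|_{\hat G} = \pi|_{\hat G}$. Setting $\bar\pi := \pi^1 - \pi = -(\pi^2 - \pi)$, we have $\bar\pi|_{\hat G} = 0$, while the validity of $\pi \pm \bar\pi$ implies that $\bar\pi$ is additive on every additive face of $\pi$, that is, $\bar\pi(x+y) = \bar\pi(x) + \bar\pi(y)$ whenever $\pi(x) + \pi(y) = \pi(x+y)$; moreover $\bar\pi$ is periodic modulo $\Z$ and satisfies $\bar\pi(x) + \bar\pi(f-x) = 0$. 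The remaining task is to deduce $\bar\pi \equiv 0$. The central tool is the \emph{Interval Lemma} from Gomory--Johnson theory: on any 2-dimensional additive face $I \times J \subseteq E(\pi)$, $\bar\pi$ is affine on $I$, on $J$, and on $I + J$, with a common slope. The structural claim to establish is that the network of 2-dimensional additive faces of a continuous piecewise-linear minimal $\pi$ with breakpoints in $\tfrac{1}{q}\Z$ is rich enough that repeated applications of the Interval Lemma force $\bar\pi$ to be affine on every subinterval of $\tfrac{1}{2q}\Z$. Combined with the vanishing of $\bar\pi$ on $\hat G = \tfrac{1}{4q}\Z$, which provides both endpoints and the midpoint of each such $\tfrac{1}{2q}$-subinterval, this forces $\bar\pi$ to vanish identically on each subinterval and hence on $\R$.

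The main obstacle is this structural claim about the reach of the Interval Lemma through $E(\pi)$ for continuous piecewise-linear minimal functions. The refinement factor $4$ in $\hat G = \tfrac{1}{4q}\Z$ (rather than $\tfrac{1}{2q}\Z$) reflects the need to include the symmetry-induced forced zeros at $f/2$ and $(1+f)/2$, which lie in $\tfrac{1}{2q}\Z$, together with enough additional sample points on each $\tfrac{1}{2q}$-subinterval to rule out perturbations that could arise from discontinuities inherited from the (not necessarily continuous) minimal dominators $\pi^i$. Making this covering/propagation argument precise---showing that the 2-dimensional additive faces of any continuous piecewise-linear minimal $\pi$ with breakpoints in $\tfrac{1}{q}\Z$ chain together to cover every $\tfrac{1}{2q}$-subinterval---is expected to be the technical heart of the proof.
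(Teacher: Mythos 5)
Your ``only if'' direction (interpolating a finite-group decomposition back to $\R$) is exactly the paper's argument for that direction. The gap is in the ``if'' direction: the structural claim you rest it on---that for \emph{every} continuous piecewise linear minimal $\pi$ with breakpoints in $\frac1q\Z$ the two-dimensional additive faces chain together so that repeated use of the Interval Lemma forces $\bar\pi$ to be affine on every subinterval---is false, not merely unproven. The paper's own construction in Theorem~\ref{thm:rational-non-extreme} is a continuous piecewise linear minimal function with rational breakpoints for which no interval of slope zero is covered by a two-dimensional additive face, nor connected in the graph~$\G$ to a covered interval; on those intervals there exist nontrivial perturbations (the $\Gamma$-equivariant ones of Lemma~\ref{lemma:not-extreme}) that satisfy every additivity relation of $\pi$ while failing to be affine there. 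So no amount of care in ``making the covering/propagation argument precise'' can close your proof as written: for such $\pi$ the propagation genuinely fails, and the correct conclusion in that case must be that $\pi|_{\hat G}$ is \emph{also} not extreme, rather than that $\bar\pi$ vanishes.

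What the paper does instead is a dichotomy. Either $\pi$ is affine imposing in $\Iedge$, in which case extremality is equivalent to uniqueness of the solution of the finite system~\eqref{equation:system} with $\B=\frac1{4q}\Z$ (Corollary~\ref{corollary:iff}, Theorem~\ref{thm:1/4q test}); a nontrivial kernel element then produces, as in Theorem~\ref{theorem:systemNotUnique}, perturbed functions which by Theorem~\ref{Theorem:functionContinuous} (using continuity of $\pi$) are continuous and piecewise linear with breakpoints in $\frac1{4q}\Z$, hence differ at some point of $\hat G$, contradicting extremality of $\pi|_{\hat G}$. Or $\pi$ is not affine imposing, and then Corollary~\ref{corollary:AIG4} supplies distinct minimal $\pi^1,\pi^2$ piecewise linear with breakpoints in $\frac1{4q}\Z$, so again their restrictions to $\hat G$ are distinct. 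This is also the real reason for the factor~$4$: the equivariant perturbation $\psi$ of Lemma~\ref{lemma:our-equivariant-psi} vanishes on $\frac1{2q}\Z$ and peaks at the odd multiples of $\frac1{4q}$, so sampling on $\frac1{4q}\Z$ is precisely what detects it---not the symmetry-induced zeros at $f/2$ that you invoke. (A minor further point: in your ``if'' direction, $\pi^1,\pi^2$ are automatically minimal by Lemma~\ref{lem:minimality-of-pi1-pi2}; replacing them by minimal dominators would destroy the identity $\pi=\frac12(\pi^1+\pi^2)$.)
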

Gomory gives a characterization of extreme valid functions for finite group
problems via a linear program. This provides an alternative algorithm for
testing extremality of {\em continuous} piecewise linear functions with
rational breakpoints, under the light of Theorem~\ref{thm:finite_group}. Of
course, Theorem~\ref{thm:main} is more general as it provides an algorithm
for discontinuous functions also. 


\paragraph{Techniques of this paper.}
The standard technique for showing extremality is to suppose that $\pi =
\frac12(\pi^1+\pi^2)$, where $\pi^1,\pi^2$ are other valid
functions.  One observes the following basic fact:
\begin{lemma}\label{lem:tightness}\label{lem:minimality-of-pi1-pi2}
  Let $\pi$ be minimal, $\pi = \frac12(\pi^1+\pi^2)$ with $\pi^1,\pi^2$
  valid functions.  Then $\pi^1,\pi^2$ are minimal, and all subadditivity
  relations $\pi(x + y) \le \pi(x) + \pi(y)$  
  that are tight for~$\pi$ are also tight for
  $\pi^1,\pi^2$.
\end{lemma}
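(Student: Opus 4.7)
The plan is to establish the two conclusions in order, using only what is assumed: that $\pi$ is minimal, that $\pi = \tfrac12(\pi^1 + \pi^2)$, and that $\pi^1, \pi^2$ are valid (hence nonnegative) functions. Both parts are classical-style arguments that exploit the fact that the set of valid functions is closed under convex combinations and that minimal valid functions are subadditive by Theorem~\ref{thm:minimal}.

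For the minimality of $\pi^1$ (the case of $\pi^2$ is symmetric), I would argue by contradiction. Suppose there exists a valid function $\pi' \neq \pi^1$ with $\pi' \le \pi^1$ pointwise; strict inequality must hold at some point $r_0$. Consider the averaged function $\tilde\pi := \tfrac12(\pi' + \pi^2)$. Since $\pi'$ and $\pi^2$ are nonnegative and both satisfy $\sum_r \pi'(r) s(r) \ge 1$ and $\sum_r \pi^2(r) s(r) \ge 1$ for every $s \in R_f(G,S)$, averaging preserves these inequalities, so $\tilde\pi$ is a valid function. By construction, $\tilde\pi \le \tfrac12(\pi^1 + \pi^2) = \pi$ pointwise, with strict inequality at $r_0$, contradicting the minimality of $\pi$. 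Hence $\pi^1$ (and by the same argument $\pi^2$) is minimal.

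For the preservation of tight subadditivity relations, suppose $\pi(x+y) = \pi(x) + \pi(y)$. Since $\pi^1, \pi^2$ are minimal, Theorem~\ref{thm:minimal} gives that they are subadditive, so
\begin{equation*}
  \pi^1(x+y) \le \pi^1(x) + \pi^1(y), \qquad \pi^2(x+y) \le \pi^2(x) + \pi^2(y).
\end{equation*}
Averaging these two inequalities yields $\pi(x+y) \le \pi(x)+\pi(y)$, but the left and right sides are equal by assumption. Consequently neither averaged inequality can be strict, so both subadditivity relations must hold with equality for $\pi^1$ and $\pi^2$ individually.

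There is no real obstacle here; the lemma is essentially a bookkeeping statement that packages two convexity-type arguments. The only subtle point to be careful about is using the correct definition of \emph{valid function}, in particular the nonnegativity constraint on $\pi^1, \pi^2$, which is what guarantees that $\tilde\pi$ constructed above is again nonnegative and hence a bona fide valid function rather than merely satisfying the linear inequality on $R_f(G,S)$.
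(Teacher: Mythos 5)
Your proof is correct and follows essentially the same route the paper relies on: the minimality of $\pi^1,\pi^2$ via the standard convexity argument (averaging a dominating valid function with $\pi^2$ contradicts minimality of $\pi$), and the tightness claim by averaging the two subadditivity inequalities, which is exactly the paper's later remark that $\Delta\pi=\tfrac12(\Delta\pi^1+\Delta\pi^2)$ with $\Delta\pi^1,\Delta\pi^2\ge 0$ forces both slacks to vanish wherever $\Delta\pi=0$. Nothing further is needed.
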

Then one shows that actually $\pi=\pi^1=\pi^2$ holds. 
The main tool used in the literature for showing this is the so-called Interval
Lemma introduced by Gomory and Johnson in~\cite{tspace}, which we state here
for a more coherent discussion of the new ideas in this paper. 

\begin{lemma}[Interval Lemma~\cite{tspace, bccz08222222}] \label{lem:interval_lemma} Let $\theta \colon \R \rightarrow
\mathbb{R}$ be a function bounded on every bounded interval. Given real numbers $u_1 < u_2$ and $v_1 < v_2$, let $U = [u_1, u_2]$, $V =
[v_1, v_2]$, and $U + V = [u_1 + v_1, u_2 + v_2]$.
If $\theta(u)+\theta(v) = \theta(u+v)$ for every
$u \in U$ and $v \in V$, then there exists $c\in \R$ such that
\begin{alignat*}{2}
  \theta(u)&=\theta(u_1)+c(u-u_1) &\quad& \text{for every $u\in U$,} \\
  \theta(v)&=\theta(v_1)+c(v-v_1) && \text{for every $v\in V$,} \\
  \theta(w)&=\theta(u_1+v_1)+c(w-u_1-v_1) && \text{for every $w\in U+V$.}
\end{alignat*}
\end{lemma}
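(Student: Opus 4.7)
My plan is to reduce the two-variable additivity hypothesis to a single Cauchy-type equation on an interval, and then invoke the classical fact that an additive function bounded on a bounded interval must be $\R$-linear.

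First I would normalize by centering at the left endpoints. Let $a = u_2 - u_1$ and $b = v_2 - v_1$, and define $\phi(s) = \theta(u_1+s) - \theta(u_1)$ on $[0,a]$, $\psi(t) = \theta(v_1+t) - \theta(v_1)$ on $[0,b]$, and $\chi(r) = \theta(u_1+v_1+r) - \theta(u_1+v_1)$ on $[0,a+b]$, so that $\phi(0) = \psi(0) = \chi(0) = 0$. Taking $u = u_1$, $v = v_1$ in the hypothesis yields $\theta(u_1+v_1) = \theta(u_1) + \theta(v_1)$; substituting this back into the general identity $\theta(u_1 + s + v_1 + t) = \theta(u_1 + s) + \theta(v_1 + t)$ produces $\chi(s+t) = \phi(s) + \psi(t)$ for $s \in [0,a]$, $t \in [0,b]$. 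Specializing to $t = 0$ identifies $\chi$ with $\phi$ on $[0,a]$, and $s = 0$ identifies $\chi$ with $\psi$ on $[0,b]$. Combining, I obtain the single Cauchy-type identity
\[
\chi(s+t) = \chi(s) + \chi(t) \quad\text{for all } s \in [0,a],\ t \in [0,b],
\]
with $\chi$ bounded on $[0, a+b]$ by hypothesis.

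The core task is then to show that $\chi$ is $\R$-linear on $[0,a+b]$. I would first iterate the identity above (carefully keeping the points inside $[0,a]$ or $[0,b]$, which may require starting on the smaller of the two intervals, $[0,\min(a,b)]$) to obtain $\chi(nr) = n\chi(r)$ for positive integers $n$ in a suitable range, and extend this algebraically to $\Q$-linearity, i.e., $\chi(qr) = q\chi(r)$ for positive rationals $q$ whenever $r$ and $qr$ both lie in the admissible range. A Darboux-style argument then upgrades $\Q$-linearity to $\R$-linearity via the boundedness hypothesis: if $\chi$ were not linear with the slope $c := \chi(a)/a$ suggested on $[0,a]$, the difference $r \mapsto \chi(r) - cr$ would be a nonzero additive $\Q$-homogeneous function on an interval, which is well known to be unbounded on every subinterval of positive length, contradicting boundedness. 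Having secured $\chi(r) = cr$ on $[0,\min(a,b)]$, I would propagate linearity to all of $[0, a+b]$ by repeated application of the additive identity. Unwinding the definitions gives $\phi(s) = cs$ on $[0,a]$ and $\psi(t) = ct$ on $[0,b]$, yielding the three affine formulae of the conclusion with common slope $c$.

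The main obstacle will be executing the $\Q$-linearity-to-$\R$-linearity passage rigorously: one must be scrupulous to keep all iterates and rational scalings inside the rectangle $[0,a] \times [0,b]$ where the additive identity actually holds, rather than inadvertently invoking additivity outside its domain of validity. While the boundedness-to-linearity step for Cauchy solutions is classical (Hamel/Darboux), I would give it a short self-contained treatment in the paper instead of appealing to heavier measure-theoretic machinery, since the situation here is genuinely one-dimensional and elementary.
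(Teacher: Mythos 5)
The paper does not prove this lemma at all: it is quoted from the literature (Gomory--Johnson's \emph{tspace} paper and Basu--Conforti--Cornu\'ejols--Zambelli), and the accompanying remark only notes that it is a result of real analysis in which boundedness on bounded intervals serves to exclude pathological Cauchy-equation solutions. So there is no in-paper proof to compare against; judged on its own, your reduction is correct and is essentially the standard functional-equations route taken in the cited sources. Centering at $(u_1,v_1)$ and checking $\theta(u_1+v_1)=\theta(u_1)+\theta(v_1)$ does collapse the hypothesis to the restricted Cauchy identity $\chi(s+t)=\chi(s)+\chi(t)$ for $s\in[0,a]$, $t\in[0,b]$, with $\chi$ bounded, and the propagation from $[0,\min(a,b)]$ to $[0,a+b]$ by repeated use of the identity is routine. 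The one place where your write-up currently leans on a fact that is not literally available is the phrase ``well known to be unbounded on every subinterval'': the classical statement concerns functions additive on all of $\R$, whereas your $g=\chi-c\,\mathrm{id}$ is only additive on the rectangle $[0,a]\times[0,b]$. You have flagged this yourself, and it is fillable in either of two standard ways: invoke an extension theorem for the Cauchy equation on a restricted domain (Dar\'oczy--Losonczi/Acz\'el) and then apply the classical dichotomy to the extension, or argue directly inside the domain. For the direct route, note that with $c=\chi(a)/a$ one has $g(a)=0$, and the ``rotation by $r_0$ modulo $a$'' dynamics stays entirely within the admissible range: if $z\in[0,a)$ and $z+r_0<a$ then $g(z+r_0)=g(z)+g(r_0)$ outright, while if $z+r_0\ge a$ then $g(z+r_0)=g(a)+g(z+r_0-a)$ since $z+r_0-a\in[0,b]$, so in both cases the value increases by $g(r_0)$ while the point remains in $[0,a)$; iterating shows $g(r_0)\neq0$ contradicts boundedness (and the periodic case when $r_0/a\in\Q$ forces $g(r_0)=0$ directly, making the separate $\Q$-homogeneity machinery optional). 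With that paragraph supplied, your proof is complete and self-contained, which is arguably a small improvement over the paper's bare citation.
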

\begin{remark}
  We remark that the Interval Lemma is a lemma of \emph{real analysis} (the theory of
  functional equations); the hypothesis that the function~$\theta$ is bounded
  on every bounded interval is one of several possible hypotheses to rule out
  certain pathological functions.
\end{remark}

Every proof of extremality in the existing literature employs the Interval
Lemma on proper intervals that satisfy certain additivity conditions to
deduce affine linearity properties that $\pi^1$ and $\pi^2$ share
with~$\pi$.  This is followed by a \emph{linear algebra} argument (explicit in
Gomory--Johnson's proof of the two slope theorem, but implicit in many other
proofs) to establish uniqueness of~$\pi$, and thus its extremality.

Surprisingly, the \emph{arithmetic} (number-theoretic) aspects of the problem
seem to have been largely overlooked, even though they are at the core of the
theory of the closely related \emph{finite} group problem.  This aspect turns
out to be the key for completing the algorithmic classification of extreme
piecewise linear functions.

To capture the relevant arithmetics of the problem, we study finite sets of
additivity relations of the form $\pi(t_i) + \pi(y) = \pi(t_i + y)$ and 
$\pi(x) + \pi(r_i-x) = \pi(r_i)$, where the points $t_i$ and $r_i$ are
certain breakpoints of the function~$\pi$.  They give rise to a useful
abstraction, the \emph{reflection group}~$\Gamma$ generated by the reflections
$\rho_{r_i}\colon x \mapsto r_i-x$ and translations~$\tau_{t_i}\colon y\mapsto
t_i+y$.  

We then study the natural action of the reflection group~$\Gamma$ on the set
of open intervals delimited by the elements of~$G=\frac1q\Z$.  Roughly
speaking, the action of $\Gamma$ transfers the affine linearity established by
the Interval Lemma on some interval~$I$ to the orbit $\Gamma(I)$.  Actually,
this transfer is more delicate, and we have to combine this arithmetic
consideration with a discussion of reachability
.  In the end, the transfer happens within each
connected component of a certain graph.

When the Interval Lemma and this transfer technique establish affine linearity
of $\pi^1, \pi^2$ on all intervals where $\pi$ is affinely linear, we can
proceed with linear algebra techniques to decide extremality
of~$\pi$.  
Otherwise, we show that there is a way to perturb $\pi$ slightly to
construct distinct minimal valid functions $\pi^1 = \pi + \bar\pi$ and $\pi^2
= \pi - \bar\pi$.  
Here
the reflection group~$\Gamma$ gives a blueprint for this perturbation in
the following way.  We use $\Gamma$-equivariant functions, i.e., functions
that are invariant under translation by~$\frac1q$ and odd with respect to
the reflection points~$0,\pm\frac1{2q},\pm\frac1q,\dots$.  Sufficiently small $\Gamma$-equivariant 
functions, again modified by restriction to a certain connected component, are
then suitable perturbation functions~$\bar\pi$.
A particular choice of these functions allows us to prove
Theorem~\ref{thm:finite_group}. 

\paragraph{An interesting irrational function and a complexity conjecture.}

We now discuss the complexity of the algorithm of Theorem~\ref{thm:main}. For
the purpose of this discussion, let us restrict ourselves to a version of the
problem where all input data are rational.  It is an open question whether the
pseudo-polynomial complexity of our algorithm is best possible, or whether
there exists a polynomial-time algorithm, or even a strongly polynomial-time
algorithm, whose running time would only depend on the number of breakpoints
but not on the sizes of the denominators.  We conjecture that the problem (in
a suitable version with all rational input data) is NP-hard in the weak sense.

We believe that the problem is intrinsically arithmetic, so that an
algorithm that is oblivious to the sizes of the denominators is not possible.
To substantiate this, we construct a certain extreme function with
irrational breakpoints (section~\ref{sec:irrational}).  Any nearby
function with rational breakpoints that uses the same construction turns out
to not be extreme.

The proof of extremality of this function 
requires another technique unrelated to the Interval Lemma. Here a
reflection group~$\Gamma$ arises under which every point~$x$ has an
orbit~$\Gamma(x)$ that is dense in~$\R$.  Roughly speaking (ignoring the
reachability issues, which our proof has to discuss), there exists no
non-trivial \emph{continuous} $\Gamma$-equivariant perturbation.  Thus the
function is extreme.

\section{Preliminaries}

\subsection{Polyhedral complexes and piecewise linear functions}

We introduce the notion of polyhedral complexes, which serves two purposes in
our paper.  One, it provides an elegant framework to define discontinuous
piecewise linear functions.  Two, it is a tool for studying subadditivity
relations. 

\begin{definition}
A (locally finite) {\em polyhedral complex} is a collection $\P$ of polyhedra in $\R^k$ such that:
\begin{enumerate}[\rm(i)]
\item if $P \in \P$, then all faces of $P$ (including the empty
  face~$\emptyset$) are in $\P$,
\item the intersection $P \cap Q$ of two polyhedra $P,Q \in \P$ is a face of both $P$ and $Q$,
\item any compact subset of $\R^k$ intersects only finitely many faces in $\P$.
\end{enumerate}
The polyhedral complex~$\P$ is called \emph{periodic modulo~$\Z^k$} if for all
$P\in \P$ and all vectors $\ve t\in\Z^k$, the translated polyhedron~$P + \ve
t$ also lies in~$\P$.
\end{definition}

\subsubsection{Discontinuous piecewise linear functions}

By Theorem~\ref{thm:minimal}, minimal functions are periodic modulo~$\Z$. 
We now give a definition of piecewise linear functions periodic modulo~$\Z$ that allows for
discontinuous functions; see also 
Figure~\ref{figure:discontinuousPiecewisePoints}.

Let $0=x_0 < x_1 < \dots < x_{n-1} < x_n=1$ be a list of (possible)
breakpoints in $[0,1]$.  
We denote by 
\begin{displaymath}
  \B = \{\, x_0 + t, x_1 + t, \dots, x_{n-1}+t\st
t\in\Z\,\}
\end{displaymath}
the set of all breakpoints.  
Define the set of 0-faces to be the collection of singletons,
\begin{displaymath}
  \I_{\B,\point} = \bigl\{\, \{ x \} \st x\in B\,\bigr\},
\end{displaymath}
and the set of 1-faces to be the collection of closed intervals,
\begin{displaymath}
  \I_{\B,\edge} = \bigl\{\, [x_i+t, x_{i+1}+t] \st i=0, \dots, {n-1} \text{
    and } t\in\Z \,\bigr\}. 
\end{displaymath}
Then $\I_{\B} = \{\emptyset\} \cup \I_{\B,\point} \cup \I_{\B,\edge}$ is a
locally finite one-dimensional polyhedral
complex that is periodic modulo~$\Z$.

For each 0-face $I\in
\I_{\B,\point}$, there is a
constant function $\pi_I(x) = b_I$; for each 1-face $I\in
\I_{\B,\edge}$, there is an affine linear function $\pi_I(x) = m_I x
+ b_I$, defined for all $x\in \R$.  A function $\pi\colon \R \to
\R_+$ periodic modulo~$\Z$ is called {\em piecewise linear} if it is given by 
$ \pi(x) = \pi_I(x)$ where $x \in \relint(I) \text{ for some } I \in \I_{\B}$.

Since $\pi$ is periodic modulo~$\Z$, for any $t\in \Z$ we have $\pi_{I+t}(x+t) = \pi_I(x)$ for $I\in
\P_B$ and thus $b_{I+t} = b_I$ for $I \in \I_{\B,\point}$
and $m_{I+t} = m_I$, $b_{I+t} = b_I-m_It$ for $I \in \I_{\B,\edge}$. 

\begin{figure}[t]
\centering
\ifpdf
\input{figures/discontinuousPiecewisePointsDashed.pdftex_t}
\else
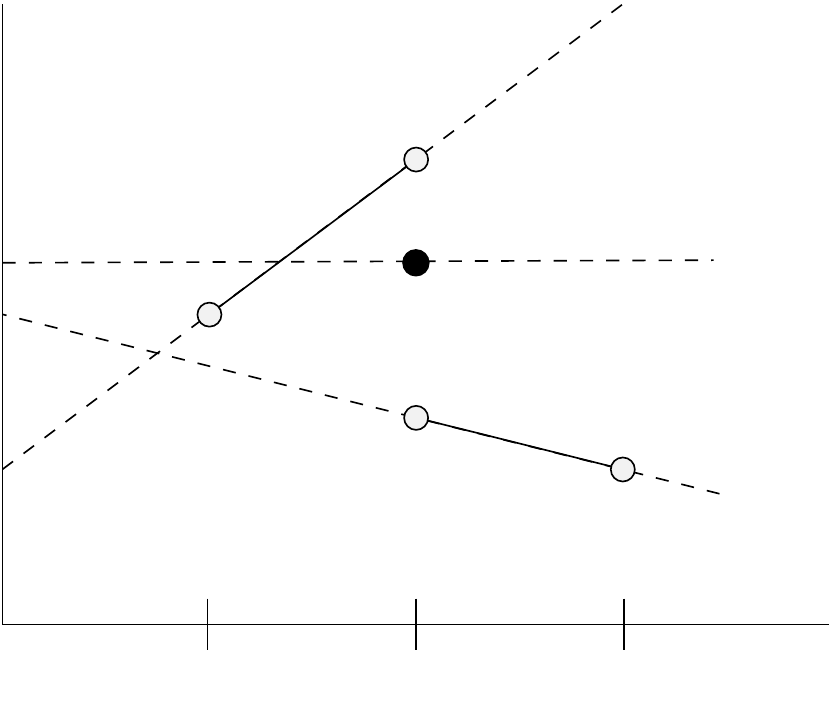
\fi
\caption{A discontinuous piecewise linear function $\pi$ with breakpoints $\B = \{x_0,
  \dots, x_n\}+\Z$ with $0=x_0 < x_1< \dots < x_n=1$.  This figure shows a piecewise linear function $\pi$ on $(x_i,
  x_{i+2})$ where $I= [x_i, x_{i+1}], J = \{x_{i+1}\}, K = [x_{i+1},
  x_{i+2}]$.  Affine linear functions $\pi_I$, $\pi_J$, $\pi_K$ are defined on
  all of~$\R$.  The piecewise linear function~$\pi$ agrees with $\pi_I$ on
  $\relint(I)$, etc.}
\label{figure:discontinuousPiecewisePoints}
\end{figure}


\subsubsection{A two-dimensional polyhedral complex}

The following notation will be used
in the rest of the paper.  
The function $\Delta\pi$ measures the slack in the
subadditivity constraints:
$$\Delta\pi(x,y) = \pi(x) + \pi(y) - \pi(x+
y).$$ 

Let $\Delta\P_\B$ be the two-dimensional polyhedral complex with faces 
$$F = F(I,J,K) = \{\,(x,y) \in \R^{2} \st x \in I, y \in J, x+ y \in K\,\},$$ 
where $I,J,K \in \I_{\B,\point} \cup \I_{\B,\edge}$; see Figure~\ref{fig:non-uniform}.

Since $\P_\B$ is periodic modulo~$\Z$, the complex~$\Delta\P_\B$ is periodic
modulo~$\Z^2$.  By construction, all faces $F$ of the complex are polytopes
(i.e., points, edges, or convex polygons),
and the faces contained in $[0,1]^2$ form a finite system of
representatives, i.e., for every face $F\in\Delta\P_\B$, there exists a face
$F^0\in\Delta\P_\B$ with $F^0 \subseteq [0,1]^2$ and a translation vector
$(s,t)\in \Z^2$ such that $F = F^0 + (s,t)$.  Note that $F^0$ is not always
uniquely determined:  $F = \{(2,2)\}$ is represented by each of the faces
$\{(0,0)\}$, $\{(1,0)\}$, $\{(0,1)\}$, and $\{(1,1)\}$. 

\begin{remark}
  This polyhedral complex was studied by Gomory and Johnson in~\cite{tspace}
  for a different purpose (the so-called merit index).
\end{remark}

\begin{figure}[t]
\begin{center}
\includegraphics[scale = 0.7]{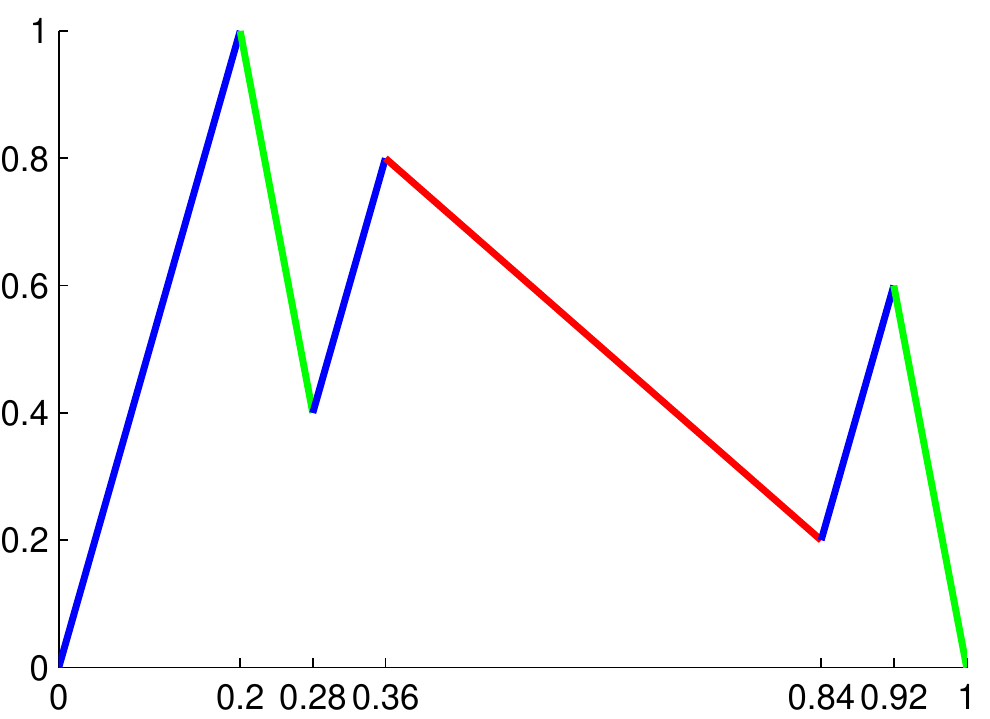}\ 
\includegraphics[scale = 0.7]{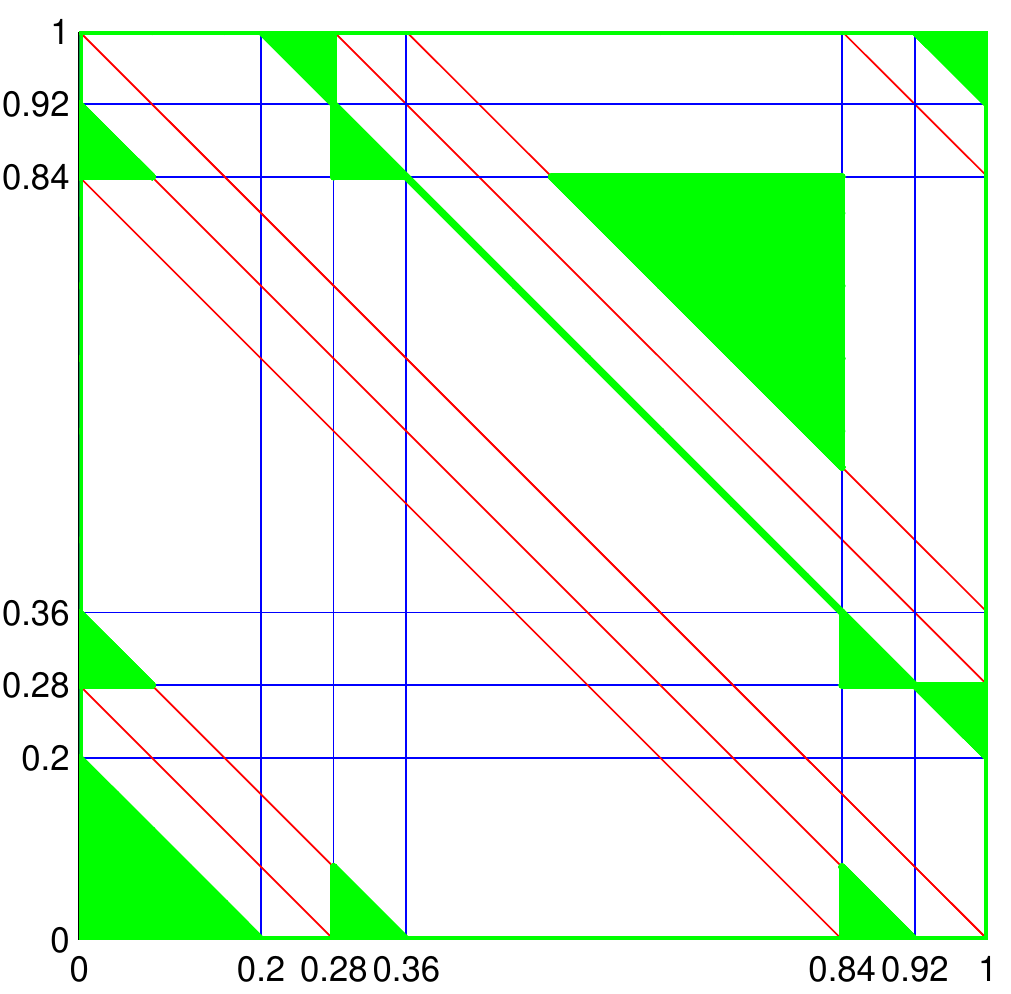}
\end{center}
\caption{A minimal continuous piecewise linear function~$\pi$ with set $B
  = \{0,0.2,\allowbreak 0.28,\allowbreak 0.36,\allowbreak 0.84,0.92,1\}+\Z$ of breakpoints and its two-dimensional
  complex~$\Delta\P_\B$.  Since $\pi$ is periodic modulo~$\Z$, we only show it
  on $[0,1]$.  The complex is defined by vertical lines with $x\in
  B$, horizontal lines with $y \in B$, and diagonal lines with $x+ y \in B$.  
  The faces~$F=F(I,J,K)$ with $\Delta\pi|_{\relint(F)} = 0$ are shaded bright
  green.  Since $\Delta\P_\B$ is periodic modulo~$\Z^2$, we only show it on
  $[0,1]^2$. Together these faces cover all points
  $(x,y)$ with $\Delta\pi(x,y) = 
  0$.  
  (This function~$\pi$ is one version of Gomory and Johnson's
  3-slope function that is constructed in \cite{tspace}.)}
\label{fig:non-uniform}
\end{figure}

Observe that $\Delta\pi|_{\relint(F)}$ is affine; if we introduce the function $$\Delta\pi_F(x,y) =
\pi_I(x) + \pi_J(y) - \pi_K(x+ y)$$ for all $x,y\in\R$, then 
$\Delta\pi(x,y) = \Delta\pi_F(x,y)$ for all $(x,y) \in \relint(F)$.  We will use $\verts(F)$
to denote the set of vertices of the face~$F$.  

For discontinuous functions~$\pi$, using the function $\Delta\pi_F$ for a
face~$F$ instead of $\Delta\pi$ allows us to conveniently express limits to
boundary points of~$F$, in particular to vertices of~$F$, along
paths within~$\relint(F)$. 
\begin{lemma}\label{lemma:face-limits}
  Let $\pi\colon \R\to\R_+$ be a piecewise linear function.  
  Let $F \in \Delta\P_\B$ and let $(u,v) \in F$.  Then
  \begin{equation}
    \Delta\pi_F(u,v) = \lim_{\substack{(x,y) \to (u,v)\\ (x,y) \in \relint(F)}} \Delta\pi(x,y).
  \end{equation}
  In particular, $\Delta\pi_F \geq 0$ on $F$ if and only if $\Delta\pi \geq 0$
  on $\relint(F)$. 
\end{lemma}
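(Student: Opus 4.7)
The plan is to exploit the fact that the local affine model $\Delta\pi_F$ is, by construction, a globally defined continuous function on $\R^2$, while $\Delta\pi$ merely agrees with it on $\relint(F)$; then the limit statement is just continuity of $\Delta\pi_F$, and the nonnegativity equivalence follows by taking closures.

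First I would establish that $\Delta\pi_F$ is continuous on all of $\R^2$. This is immediate: each of $\pi_I$, $\pi_J$, $\pi_K$ is defined on all of $\R$ as an affine linear function (or as a constant, if the corresponding face of $\P_\B$ is a singleton), so their combination $\Delta\pi_F(x,y) = \pi_I(x) + \pi_J(y) - \pi_K(x+y)$ is continuous on $\R^2$.

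Next I would verify the identity $\Delta\pi(x,y) = \Delta\pi_F(x,y)$ on $\relint(F)$, which the text already asserts but which I would derive cleanly as follows. Given $(x,y) \in \relint(F)$ with $F = F(I,J,K)$, I claim $x \in \relint(I)$, $y \in \relint(J)$, and $x+y \in \relint(K)$. Indeed, if, say, $x$ lay on the relative boundary of~$I$, then the face $F' = F(I',J,K') \subset F$ obtained by replacing $I$ with the corresponding proper face $I'$ of $I$ and $K$ with the appropriate face of $K$ containing $x+y$ would be a proper face of $F$ containing $(x,y)$, contradicting $(x,y)\in\relint(F)$. Granted these memberships in relative interiors, the definition of the piecewise linear function~$\pi$ gives $\pi(x)=\pi_I(x)$, $\pi(y)=\pi_J(y)$, $\pi(x+y)=\pi_K(x+y)$, hence $\Delta\pi(x,y)=\Delta\pi_F(x,y)$.

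With these two ingredients in hand, the limit statement follows from continuity: for any $(u,v)\in F$ one has $(u,v)\in\cl(\relint(F))=F$ (since $F$ is a polytope and hence has nonempty relative interior inside its affine hull, with closure equal to~$F$), so
\[
\lim_{\substack{(x,y)\to(u,v)\\(x,y)\in\relint(F)}} \Delta\pi(x,y)
\;=\; \lim_{\substack{(x,y)\to(u,v)\\(x,y)\in\relint(F)}} \Delta\pi_F(x,y)
\;=\; \Delta\pi_F(u,v).
\]
For the ``in particular'' clause, one direction is trivial: if $\Delta\pi_F\ge 0$ on $F$, then it is nonnegative on $\relint(F)$, where it coincides with $\Delta\pi$. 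Conversely, if $\Delta\pi\ge 0$ on $\relint(F)$, then $\Delta\pi_F\ge 0$ on $\relint(F)$, and by continuity of $\Delta\pi_F$ this inequality persists on the closure of $\relint(F)$, which is $F$.

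The only step that requires any care is the verification that $(x,y)\in\relint(F)$ forces $x\in\relint(I)$, $y\in\relint(J)$, $x+y\in\relint(K)$; I expect this face-versus-relative-interior bookkeeping to be the main (and really the only) subtlety, and I would handle it either by the face-replacement argument above or, equivalently, by writing $F$ explicitly as the intersection of $I\times J$ with the strip $\{(x,y):x+y\in K\}$ and arguing dimensionally.
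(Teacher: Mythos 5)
Your proposal is correct and takes essentially the same route as the paper, whose entire proof is the observation that $\Delta\pi_F$ is built from the globally defined affine functions $\pi_I,\pi_J,\pi_K$ and hence continuous, so the limit statement follows from the agreement $\Delta\pi=\Delta\pi_F$ on $\relint(F)$. The only difference is that you spell out the bookkeeping showing $(x,y)\in\relint(F)$ gives $x\in\relint(I)$, $y\in\relint(J)$, $x+y\in\relint(K)$ — an identity the paper simply asserts in the text immediately before the lemma — so your write-up is, if anything, more detailed than the paper's.
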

\begin{proof}
  This follows from the definition of~$\Delta\pi_F$ and the continuity of the
  affine functions~$\pi_I$, $\pi_J$, and $\pi_K$. 
\end{proof}

\subsection{Finite test for minimality of piecewise linear functions}
\label{section:minimalityTest}

By Theorem~\ref{thm:minimal}, we can test whether a function is minimal by
testing subadditivity and the symmetry condition. 

A finite subadditivity test for {\em continuous} piecewise
linear functions was given by Gomory and Johnson \cite[Theorem
7]{tspace}.\footnote{Note that the word ``minimal'' needs to be replaced by
  ``satisfies the symmetry condition'' throughout the statement of their
  theorem and its proof.}  
It easily extends to the discontinuous case.  We include
a proof in our notation to make the present paper self-contained, but do not
claim novelty.  Richard, Li, and Miller \cite[Theorem
22]{Richard-Li-Miller-2009:Approximate-Liftings}, for example, presented a
superadditivity test for discontinuous piecewise linear functions.

We first show that $f$ must be a breakpoint of any minimal
valid function.

\begin{lemma}
\label{lemma:faVertex}
If $\pi\colon \R\to\R$ is a minimal valid function with breakpoints in~$B$, then $f \in \B$.  
\end{lemma}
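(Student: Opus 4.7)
My plan is a proof by contradiction: assume $f \notin \B$, exploit the symmetry condition near~$f$ to force $\pi$ to vanish on a neighborhood of~$0$, and then use subadditivity to propagate this vanishing to all of~$\R$, contradicting $\pi(f)=1$.

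First, set up the geometry. Since $\Z\subseteq\B$, we may take $f\in[0,1)$, and the assumption $f\notin\B$ means that $f$ lies in the relative interior of a unique $1$-face $I=[x_i,x_{i+1}]\in\I_{\B,\edge}$. Hence $\pi$ agrees with the affine function $\pi_I(x)=m_I x+b_I$ on $\relint(I)$, and in particular at $f$. By minimality (Theorem~\ref{thm:minimal}) we have $\pi(0)=0$, and then the symmetry condition at $x=0$ gives $\pi(f)=1$. Choose $\delta>0$ small enough that $(f-\delta,f+\delta)\subset\relint(I)$; for every $\epsilon\in(0,\delta)$,
\begin{equation*}
\pi(f-\epsilon)=\pi_I(f)-m_I\epsilon=1-m_I\epsilon,\qquad \pi(f+\epsilon)=1+m_I\epsilon.
\end{equation*}

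Next, apply the symmetry condition at $x=\epsilon$ and at $x=-\epsilon$ to obtain
\begin{equation*}
\pi(\epsilon)=1-\pi(f-\epsilon)=m_I\epsilon,\qquad \pi(-\epsilon)=1-\pi(f+\epsilon)=-m_I\epsilon.
\end{equation*}
Non-negativity of $\pi$ forces $m_I\epsilon\ge 0$ and $-m_I\epsilon\ge 0$ for all small $\epsilon>0$, so $m_I=0$ and $\pi\equiv 0$ on $(-\delta,\delta)\setminus\{0\}$.

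Finally, propagate using subadditivity. For any $y\in\R$ and any $\epsilon\in(0,\delta)$, the two inequalities $\pi(y+\epsilon)\le\pi(y)+\pi(\epsilon)=\pi(y)$ and $\pi(y)=\pi((y+\epsilon)+(-\epsilon))\le\pi(y+\epsilon)+\pi(-\epsilon)=\pi(y+\epsilon)$ together give $\pi(y+\epsilon)=\pi(y)$. Iterating this over a chain of shifts of size less than $\delta$ connecting $0$ to~$f$ shows $\pi(f)=\pi(0)=0$, contradicting $\pi(f)=1$.

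The only delicate point is that $\pi$ may be discontinuous at the breakpoints of~$\B$, so one cannot naively use limits near~$0$ or near~$f$; however, the entire argument above is pointwise, using only the value of $\pi$ at specific points inside $\relint(I)$ and at points $\pm\epsilon$ reached via subadditivity, so the possible discontinuities never need to be resolved. I do not foresee a substantive obstacle beyond being careful that every evaluation $\pi_I(f\pm\epsilon)$ is taken at a point in $\relint(I)$, which is ensured by the choice of $\delta$.
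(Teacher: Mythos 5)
Your proof is correct; every evaluation is made at a legitimate point ($f\pm\epsilon\in\relint(I)$, symmetry at $\pm\epsilon$, subadditivity pointwise), and the final shift argument does reach $\pi(f)=\pi(0)=0$, contradicting $\pi(f)=1$. It follows the same overall strategy as the paper — assume $f\in\intr(I)$ for some $I\in\I_{\B,\edge}$, force $\pi$ to vanish on a neighborhood of the origin, then propagate by subadditivity — but the two halves are executed differently. To get the vanishing near $0$, the paper uses $\pi\le 1$ together with affinity on $\intr(I)$ to conclude $\pi\equiv 1$ there and then reflects by symmetry, whereas you apply symmetry at $\pm\epsilon$ and use only $\pi\ge 0$ to force the slope $m_I=0$; these are equivalent in content, and yours has the mild advantage of not invoking the bound $\pi\le1$. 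The more substantive difference is the propagation: the paper takes the maximal interval $J\ni 0$ on which $\pi\equiv 0$ and derives a subadditivity violation at its supremum, a slightly delicate sup argument, while you use both $\pi(\epsilon)=0$ and $\pi(-\epsilon)=0$ to show $\pi(y+\epsilon)=\pi(y)$ for every $y$ and every $\epsilon\in(0,\delta)$, i.e.\ exact invariance under all small shifts, and then march from $0$ to $f$ in finitely many steps. Your version is arguably cleaner and avoids any maximality reasoning; the paper's version needs only one-sided information ($\pi\equiv0$ on a right neighborhood of $0$) and so is marginally more robust, but for this lemma both work equally well.
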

\begin{proof}
Since $\pi$ is minimal, we have $0 \leq \pi \leq 1$.
Now, suppose that $f \notin \B$, i.e., $f \in \intr(I)$ for some $I\in
\I_{\B,\edge}$.  Symmetry and the condition that $\pi(0) = 0$ imply that
$\pi(f) = 1$.  Since $\pi \leq 1$ and $\pi$ is affine in $\intr(I)$, it follows that
$\pi \equiv 1$ on $\intr(I)$.  The interval $f-I$ contains the origin in its interior
and by symmetry, $\pi \equiv 0$ on $\intr(f-I)$.  Let $J$ be the largest 
interval containing the origin such that $\pi \equiv 0$ on $J$ and let
$\bar x = \sup\{\, x \in J \,\}$.  Since $J$ is the largest such
interval, and $\bar x \neq \infty$, for every small $\epsilon > 0$, there exists a
point $y \geq \bar x$ such that $\epsilon, y-\epsilon \in J$ and $
\pi(y) >0$.  But then $\pi(\epsilon) + \pi(y - \epsilon) = 0  < \pi(y)$,
which violates subadditivity, and therefore is a contradiction. 
\end{proof}




\begin{theorem}[Minimality test]
\label{minimality-check}
A piecewise linear function $\pi\colon \R
\to\R$ with breakpoints $\B$ that is periodic modulo~$\Z$ is minimal
if and only if the following conditions hold:
\begin{enumerate}
\item $\pi(0) = 0$.

\item Subadditivity test: For all $F \in \Delta\P_\B$
  with $F\subseteq[0,1]^2$, we have that $\Delta\pi_F(u,v) \geq 0$ for all $(u,v) \in\verts(F)$.
\item Symmetry test: $\pi(f) = 1$, and for all $F \in \Delta\P_\B$ with $$F
  \subseteq \bigl\{\,(x,y) \in [0,1]^2 \bigst x + y \equiv f\textstyle\pmod{1}\,\bigr\},$$ 
  we have that $\Delta\pi_F(u,v) = 0$ for all $(u,v)\in \verts(F)$.

\end{enumerate}
\end{theorem}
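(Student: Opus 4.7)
The plan is to reduce both directions to the Gomory--Johnson characterization (Theorem~\ref{thm:minimal}), showing that conditions 1--3 are a finite encoding of the three infinite conditions (subadditivity everywhere, symmetry everywhere, and $\pi(r)=0$ for $r \in \Z$). Before starting, note two setup reductions: (a)~the condition $\pi(r)=0$ for $r\in\Z$ follows from condition~1 and periodicity modulo~$\Z$; (b)~by Lemma~\ref{lemma:faVertex} (or by refining $\B$ trivially, since $\pi$ is affine on any edge containing~$f$), we may assume $f\in \B$, so that the line $\{x+y \equiv f \pmod 1\}$ is a union of faces $F(I,J,K) \in \Delta\P_\B$ with $K = \{f+t\}$, $t\in\Z$, covering the whole line.

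For the forward direction, assume $\pi$ is minimal. Then by Theorem~\ref{thm:minimal}, $\pi$ is non-negative, $\pi(0)=0$, subadditive, and symmetric, and $\pi(f)=\pi(f)+\pi(0)=1$ using symmetry at $x=0$. Conditions~1 and the $\pi(f)=1$ part of~3 are immediate. For condition~2, subadditivity means $\Delta\pi \geq 0$ on $\R^2$; restricting to $\relint(F)$ gives $\Delta\pi_F\geq 0$ on $\relint(F)$, and Lemma~\ref{lemma:face-limits} extends this to all of~$F$, in particular to $\verts(F)$. For the symmetry part of~3, any $(x,y) \in \relint(F)$ with $F \subseteq \{x+y\equiv f\pmod 1\}$ satisfies $\pi(x)+\pi(y)=1=\pi(f)=\pi(x+y)$ by symmetry, $\pi(f)=1$, and $\Z$-periodicity; so $\Delta\pi_F = 0$ on $\relint(F)$ and thus on~$F$ by Lemma~\ref{lemma:face-limits}.

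For the backward direction, assume conditions 1--3 (and $\pi\geq 0$, which is part of being a candidate valid function). The key observation is that $\Delta\pi_F$ is affine on the polytope~$F$, and every polytope~$F \in \Delta\P_\B$ is the convex hull of its vertices; hence an affine function that is $\geq 0$ (resp.~$= 0$) on $\verts(F)$ is $\geq 0$ (resp.~$= 0$) on all of~$F$. Combining condition~2 with this, $\Delta\pi_F \geq 0$ on every $F \subseteq [0,1]^2$, and by $\Z^2$-periodicity of $\Delta\P_\B$ and $\Delta\pi$, on every $F \in \Delta\P_\B$. Since the relative interiors $\relint(F)$ partition $\R^2$ and $\Delta\pi|_{\relint(F)} = \Delta\pi_F|_{\relint(F)}$, we obtain $\Delta\pi \geq 0$ on $\R^2$, i.e., subadditivity. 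The same polytope argument applied to condition~3 gives $\Delta\pi_F=0$ on every face $F \subseteq \{x+y\equiv f \pmod 1\}$; together with the covering property secured by $f \in \B$, this yields $\pi(x)+\pi(y) = \pi(x+y)=\pi(f)=1$ for all $x+y\equiv f \pmod 1$, which is the symmetry condition. By Theorem~\ref{thm:minimal}, $\pi$ is minimal.

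The main obstacle is a bookkeeping one: one must verify that the faces $F \subseteq \{x+y\equiv f \pmod 1\}$ really do cover this entire set (so that pointwise symmetry is obtained everywhere, not only inside face relative interiors), which is exactly where assuming $f \in \B$ is used; dually, one must argue that the faces $F \subseteq [0,1]^2$ modulo $\Z^2$-translation cover $\R^2$, a routine consequence of the definition of $\Delta\P_\B$. A minor auxiliary point is that non-negativity of $\pi$ is not among the three conditions; it is part of the definition of a valid function and so is either assumed at the outset or checked separately at the finitely many breakpoints and one-sided limits of the piecewise linear function.
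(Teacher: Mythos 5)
Your proof is correct and follows essentially the same route as the paper's: necessity via the limit formula of Lemma~\ref{lemma:face-limits}, and sufficiency by exploiting that each $\Delta\pi_F$ is affine on the (bounded) face $F$, so its sign/vanishing at $\verts(F)$ propagates to all of $F$, combined with periodicity and with $f\in\B$ (Lemma~\ref{lemma:faVertex}) guaranteeing that the diagonal $x+y\equiv f\pmod 1$ is covered by faces of $\Delta\P_\B$, exactly as in the paper. The remaining differences (working with $\Z^2$-periodicity rather than reducing to $[0,1]^2$, and your remark about non-negativity being part of validity) are cosmetic.
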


\begin{proof}
  We use the characterization of minimal functions given by
  Theorem~\ref{thm:minimal}. 


Let $F \in \Delta\P_\B$ with $F\subseteq[0,1]^2$ and let $(u,v) \in \verts(F)$.  Then, by Lemma~\ref{lemma:face-limits},
\begin{equation}
\Delta\pi_F(u,v) = \lim_{\substack{(x,y) \to (u,v)\\ (x,y) \in \relint(F)}} \Delta\pi(x,y).
\end{equation}
Since $\pi$ is subadditive by Theorem~\ref{thm:minimal}, $\Delta\pi(x,y)\geq 0$ for all $x,y \in [0,1]$,
and therefore the limit above is also non-negative.  Suppose $F \subseteq
\{\,(x,y)\in[0,1]^2 \st x+ y \equiv f\pmod1\,\}$.  Since $\pi$ is symmetric
by Theorem~\ref{thm:minimal}, we have  $\pi(x)
+ \pi(f-x) = 1 = \pi(f)$ and therefore $\Delta\pi(x,f-x)=0$ for
$x\in[0,1]$. Since $\pi$ is also periodic modulo~$\Z$ by 
Theorem~\ref{thm:minimal}, we have $\Delta\pi(x,y) = 0$ whenever $x +  y
\equiv f\pmod1$ 
and $x,y \in [0,1]$.  Therefore, the above limit is in fact a limit of zeros,
and $\Delta\pi_F(u,v) = 0$. 

We now show that the stated conditions are sufficient.

For subadditivity, we need to show that $\Delta\pi(x,y) \geq 0$ for all $x,y
\in \R$.  Since $\pi$ is periodic modulo~$1$, it suffices to show that
$\Delta\pi(x,y) \geq 0$ for
$x,y\in[0,1]$. 
Let $x,y \in [0,1]$, then $(x,y) \in \relint(F)$ for some unique $F \in
\Delta\P_\B$ with $F\subseteq[0,1]^2$ and $\Delta\pi(x,y) = \Delta\pi_F(x,y)$.  
Since $\Delta\pi_F(u,v) \geq 0$ for all $(u,v) \in \verts(F)$, by convexity ($\Delta\pi_F$ is affine), $\Delta\pi_F(x,y) \geq 0$.  Therefore $\pi$ is subadditive.  

 Similarly, to show symmetry, because $\pi$ is periodic modulo~$1$, it
 suffices to show that $\Delta\pi(x,y) = 0$ for all $x,y \in [0,1]$ such that
 $x + y \equiv f\pmod1$.  Let $x,y \in [0,1]$ such that $x+ y \equiv f\pmod1$.
 Since $f \in \B$ by Lemma~\ref{lemma:faVertex}, $(x,y) \in \relint(F)$ for
 some $F \in \Delta\P_\B$ for $F \subseteq \{\,(x,y)\in[0,1]^2 \st x+ y \equiv
 f\pmod1\,\}$.   Since $\Delta\pi_F(u,v) = 0$ for all $(u,v) \in \verts(F)$,
 and $\Delta\pi_F$ is affine, it follows that $\Delta\pi(x,y) =
 \Delta\pi_F(x,y) = 0$.   
\end{proof}

 \begin{remark}
   \label{remark:minimality-check-complexity}
This theorem implies that, for $n= |\B\cap[0,1)|$,  
 there are $O(n^2)$ pairs that need to
be checked in order to check subadditivity, and only $O(n)$ points that
need to be evaluated to check for symmetry.  These follow from the fact that
there are $O(n)$ hyperplanes (lines) in the two-dimensional polyhedral
complex~$\Delta\P_B$ in $[0,1]^2$.  Any $0$-dimensional face (vertex) is at the
intersection of at least two hyperplanes (lines), therefore, at most $O(n)
\choose 2$ possibilities.  And any $0$-dimensional face (vertex) corresponding
to a symmetry condition is at the intersection of the hyperplane (line) $x + y
= f$ or $x + y = f+1$
and any other one, therefore, at most $O(n)$ such points.   
\end{remark}%

\subsection{Limit relations}

Let $\pi$ be a minimal valid function that is piecewise linear. 
Suppose $\pi^1$ and $\pi^2$ are minimal valid functions such that  $\pi =
\frac12(\pi^1+\pi^2)$. 
By Lemma~\ref{lem:tightness}, whenever $\pi(x) + \pi(y) =
\pi(x+y)$, the functions $\pi^1$ and $\pi^2$ must also satisfy this
equality relation, that is, $\pi^i(x) + \pi^i(y) = \pi^i(x+ y)$.  Let
$\Delta\pi^i(x,y) = \pi^i(x) + \pi^i(y) - \pi^i(x+ y)$ for $i=1,2$.
Equivalently, whenever $\Delta\pi(x,y) = 0$, we also have $\Delta\pi^i(x,y) =
0$ for $i=1,2$.  This is easy to see since $\pi, \pi^1, \pi^2$ are
subadditive, $\Delta\pi, \Delta\pi^1, \Delta\pi^2 \geq 0$ and $\Delta\pi =
\tfrac{1}{2}(\Delta\pi^1 + \Delta\pi^2)$.  We extend this idea
slightly, which allows us to handle the discontinuous case conveniently.

\begin{lemma}~\label{lemma:tight-implies-tight}
  Let $\pi\colon \R\to\R_+$ be a piecewise linear minimal valid function.
  Suppose $\pi = \frac12(\pi^1 + \pi^2)$, where $\pi^1$ and $\pi^2$ are valid
  functions. 
  Let $F \in \Delta\P_\B$ and let $(u,v) \in F$.  
  If $\Delta\pi_F(u,v)=0$ then $$\lim_{\substack{(x,y) \to (u,v)\\ (x,y) \in
      \relint(F)}}\Delta\pi^i(x,y) = 0 \quad\text{for $i=1,2$.}$$
\end{lemma}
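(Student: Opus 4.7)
The plan is to apply a straightforward sandwich (squeeze) argument, combining Lemma~\ref{lemma:face-limits} with Lemma~\ref{lem:minimality-of-pi1-pi2}. The hypothesis only assumes that $\pi^1, \pi^2$ are valid, but Lemma~\ref{lem:minimality-of-pi1-pi2} upgrades this: both $\pi^1$ and $\pi^2$ are in fact minimal. By Theorem~\ref{thm:minimal}, minimal functions are subadditive, so $\Delta\pi^1(x,y) \ge 0$ and $\Delta\pi^2(x,y) \ge 0$ for all $x,y\in\R$.

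Next I would exploit Lemma~\ref{lemma:face-limits} applied to $\pi$ itself: for $(u,v)\in F$,
\[
  \lim_{\substack{(x,y)\to(u,v)\\ (x,y)\in\relint(F)}} \Delta\pi(x,y) \;=\; \Delta\pi_F(u,v) \;=\; 0,
\]
where the last equality is the hypothesis. From $\pi = \tfrac12(\pi^1+\pi^2)$, one immediately gets the pointwise identity $\Delta\pi = \tfrac12(\Delta\pi^1 + \Delta\pi^2)$, so combined with non-negativity of $\Delta\pi^1$ and $\Delta\pi^2$ we obtain, for each $i\in\{1,2\}$ and every $(x,y)\in\R^2$,
\[
  0 \;\le\; \Delta\pi^i(x,y) \;\le\; 2\,\Delta\pi(x,y).
\]

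Finally I would restrict $(x,y)$ to $\relint(F)$ and let $(x,y)\to(u,v)$. The upper bound tends to $0$ by the limit computed above, and the lower bound is already $0$, so the squeeze theorem gives
\[
  \lim_{\substack{(x,y)\to(u,v)\\ (x,y)\in\relint(F)}} \Delta\pi^i(x,y) \;=\; 0,
\]
as desired. There is no real obstacle here; the only conceptual point worth flagging is that one cannot simply write ``$\Delta\pi^i(u,v)=0$'' because $\pi^1,\pi^2$ need not be continuous at $(u,v)$ (indeed the point of using $\Delta\pi_F$ and the relative-interior limit is precisely to accommodate the discontinuous case). Hence the statement of the lemma — and this proof — is phrased in terms of the one-sided limit along $\relint(F)$ rather than a value at $(u,v)$.
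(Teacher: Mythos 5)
Your proof is correct and follows essentially the same route as the paper: both use $\Delta\pi = \tfrac12(\Delta\pi^1+\Delta\pi^2)$, the minimality of $\pi^1,\pi^2$ from Lemma~\ref{lem:minimality-of-pi1-pi2} to get $\Delta\pi^i\ge 0$, and Lemma~\ref{lemma:face-limits} to identify the limit of $\Delta\pi$ along $\relint(F)$ with $\Delta\pi_F(u,v)=0$. Your explicit sandwich $0\le\Delta\pi^i\le 2\,\Delta\pi$ is just a slightly more formal packaging of the paper's closing step, and your remark about avoiding the value at $(u,v)$ matches the paper's motivation for stating the lemma via limits.
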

\begin{proof}
  From definition of $\Delta\pi^1, \Delta\pi^2$, we see that $\Delta\pi =
  \tfrac{1}{2}(\Delta\pi^1 + \Delta\pi^2)$.  By
  Lemma~\ref{lem:minimality-of-pi1-pi2}, $\pi^1, \pi^2$
  are minimal and thus $\Delta\pi^i \geq 0$ for $i=1,2$. 
  If $\Delta\pi_F(u,v) = 0$,
  then, using Lemma~\ref{lemma:face-limits},
  $$
  0 = \Delta\pi_F(u,v)= \lim_{\substack{(x,y) \to (u,v)\\ (x,y) \in \relint(F)}} \Delta\pi(x,y) 
  =\lim_{\substack{(x,y) \to (u,v)\\ (x,y) \in \relint(F)}} \tfrac{1}{2} \bigl( \Delta\pi^1(x,y) + \Delta\pi^2(x,y)\bigr).
  $$
  Since the right hand side limit is zero and $\Delta\pi^1, \Delta\pi^2 \geq 0$, we must have that 
  $$
  0 = \lim_{\substack{(x,y) \to (u,v)\\ (x,y) \in \relint(F)}} \Delta\pi^1(x,y) 
  =\lim_{\substack{(x,y) \to (u,v)\\ (x,y) \in \relint(F)}}  \Delta\pi^2(x,y).
  $$
\end{proof}

\subsection{Continuity results}

We will need the following lemma and theorem on continuity.  Although similar
results appear in~\cite{infinite2}, we provide proofs of these facts to keep
this paper more self-contained.        
\begin{lemma}\label{lem:lipschitz}
If $\theta\colon \R \to \R$ is a subadditive function and $\limsup_{h\to 0}\lvert \frac{\theta(h)}{h}\rvert = L < \infty$, then $\theta(h)$ is Lipschitz continuous with Lipschitz constant $L$.
\end{lemma}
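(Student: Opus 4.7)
The plan is to combine iterated subadditivity with the hypothesis on $\limsup_{h\to 0}|\theta(h)/h|$. The intuition is that subadditivity lets us bound $\theta$ over a large jump by $n$ copies of $\theta$ over a small jump of size $1/n$, and the $\limsup$ hypothesis controls these small-scale values.

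First, I would fix $x,y\in\R$ with $x < y$ and write $h = y-x > 0$. By iterated subadditivity,
\begin{equation*}
\theta(y) = \theta\bigl(x + n\cdot(h/n)\bigr) \leq \theta(x) + n\,\theta(h/n),
\end{equation*}
and symmetrically, writing $\theta(x) = \theta(y + n\cdot(-h/n))$,
\begin{equation*}
\theta(x) \leq \theta(y) + n\,\theta(-h/n).
\end{equation*}
Hence
\begin{equation*}
\theta(y) - \theta(x) \leq n\,\theta(h/n) \quad\text{and}\quad \theta(x) - \theta(y) \leq n\,\theta(-h/n).
\end{equation*}

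Next, I would apply the $\limsup$ hypothesis. Fix $\epsilon > 0$; by definition of $\limsup$, there exists $\delta > 0$ such that $|\theta(t)/t| \leq L+\epsilon$ for all $0 < |t| < \delta$. Choose $n$ large enough that $h/n < \delta$. Then $|\theta(\pm h/n)| \leq (L+\epsilon)\,h/n$, so $|n\,\theta(\pm h/n)| \leq (L+\epsilon)\,h$. Combining with the two inequalities above yields
\begin{equation*}
|\theta(y) - \theta(x)| \leq (L+\epsilon)\,h = (L+\epsilon)\,|y - x|.
\end{equation*}

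Finally, since $\epsilon > 0$ was arbitrary, I would let $\epsilon \to 0$ to obtain $|\theta(y) - \theta(x)| \leq L\,|y - x|$, which is the desired Lipschitz bound. The case $x = y$ is trivial and the case $x > y$ follows by symmetry.

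The only subtle point is the passage from ``subadditivity controls $\theta(y)-\theta(x)$ by $n\theta(h/n)$ for every $n$'' to the uniform bound $(L+\epsilon)|y-x|$; this works because the bound on $|n\theta(\pm h/n)|$ coming from the $\limsup$ is independent of $n$ once $n$ is large enough, and the left-hand sides $\theta(y)-\theta(x)$ and $\theta(x)-\theta(y)$ do not depend on $n$ at all. No continuity of $\theta$ is needed a priori; Lipschitz continuity emerges directly from these two ingredients.
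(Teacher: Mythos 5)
Your proof is correct and follows essentially the same route as the paper's: both arguments subdivide $y-x$ into $n$ equal small increments, use subadditivity to reduce the difference $\theta(y)-\theta(x)$ to small-scale values of $\theta$, invoke the $\limsup$ bound $|\theta(t)|\leq (L+\epsilon)|t|$ for $|t|$ small, and let $\epsilon\to 0$. The only difference is cosmetic ordering (you telescope via iterated subadditivity first and apply the small-scale bound once, while the paper first derives the local bound $|\theta(x)-\theta(y)|\leq (L+\delta)|x-y|$ for nearby points and then chains it over the subintervals), and your explicit treatment of both $\theta(h/n)$ and $\theta(-h/n)$ is a clean way to handle the two directions.
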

\begin{proof}
Fix any $\delta > 0$. Since $\limsup_{h\to 0}\lvert \frac{\theta(h)}{h}\rvert = L$,  there exists $\epsilon > 0$ such that for any $x,y \in \R$ satisfying $|x - y| < \epsilon$, $\frac{|\theta(x-y)|}{|x-y|} < L+\delta$. By subadditivity, $|\theta(x-y)| \geq |\theta(x) - \theta(y)|$ and so $\frac{|\theta(x) - \theta(y)|}{|x-y|} < L + \delta$ for all $x,y \in \R$ satisfying $|x - y| < \epsilon$. This immediately implies that for {\em all} $x,y \in \R$, $\frac{|\theta(x) - \theta(y)|}{|x-y|} < L + \delta$, by simply breaking the interval $[x,y]$ into equal subintervals of size at most $\epsilon$. Since the choice of $\delta$ was arbitrary, this shows that for every $\delta > 0$, $\frac{|\theta(x) - \theta(y)|}{|x-y|} < L + \delta$ and therefore, $\frac{|\theta(x) - \theta(y)|}{|x-y|} \leq L$. Therefore, $\theta$ is Lipschitz continuous with Lipschitz constant $L$.\end{proof}

\begin{theorem}
\label{Theorem:functionContinuous}
Let $\pi \colon \R \to \R$ be a minimal valid function and $\pi =
\frac{1}{2}(\pi^1 + \pi^2)$, where $\pi^1, \pi^2$ are valid functions. 
Suppose $\limsup_{h\to 0}\lvert \frac{\pi(h)}{h} \rvert < \infty$. 
Then this condition also holds for $\pi^1$ and $\pi^2$. This implies that
$\pi, \pi^1$ and $\pi^2$ are all Lipschitz continuous. 
\end{theorem}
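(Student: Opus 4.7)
The plan is to reduce the entire statement to two ingredients already available in the excerpt: non-negativity of valid functions (which lets us transfer the $\limsup$ bound from $\pi$ to $\pi^1,\pi^2$), and Lemma~\ref{lem:lipschitz} (which converts the $\limsup$ bound plus subadditivity into Lipschitz continuity).

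First I would observe that by the convention fixed earlier in the paper, every valid function is non-negative, so $\pi^1, \pi^2 \geq 0$ everywhere. Combined with $\pi = \tfrac12(\pi^1 + \pi^2)$, this gives the pointwise sandwich
\[
0 \;\leq\; \pi^i(x) \;\leq\; 2\pi(x) \qquad \text{for all } x\in \R,\ i=1,2.
\]
Dividing by $|h|$ for $h \neq 0$ yields $\bigl|\pi^i(h)/h\bigr| \leq 2\bigl|\pi(h)/h\bigr|$, and taking $\limsup$ as $h \to 0$ gives
\[
\limsup_{h\to 0} \left|\frac{\pi^i(h)}{h}\right| \;\leq\; 2\,\limsup_{h\to 0} \left|\frac{\pi(h)}{h}\right| \;<\; \infty,
\]
which is the first claim of the theorem.

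Next I would invoke Lemma~\ref{lem:minimality-of-pi1-pi2} to conclude that $\pi^1$ and $\pi^2$ are themselves minimal valid functions. By Theorem~\ref{thm:minimal}, minimality entails subadditivity, so $\pi, \pi^1, \pi^2$ are all subadditive. Each of them also has a finite $\limsup_{h\to 0}|\,\cdot(h)/h|$ — by hypothesis for $\pi$, and by the sandwich bound above for $\pi^1, \pi^2$. Applying Lemma~\ref{lem:lipschitz} to each of the three functions in turn yields that $\pi, \pi^1, \pi^2$ are Lipschitz continuous, completing the proof.

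There is no real obstacle: the argument is a two-line pointwise comparison followed by three invocations of Lemma~\ref{lem:lipschitz}. The only place where one must be a little careful is in writing the sandwich bound with absolute values on $h$ (since $h$ may approach $0$ from either side), but the non-negativity of $\pi^i$ makes this bookkeeping trivial: $|\pi^i(h)/h| = \pi^i(h)/|h| \leq 2\pi(h)/|h| = 2|\pi(h)/h|$.
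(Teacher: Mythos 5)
Your proposal is correct and follows essentially the same argument as the paper: the sandwich $0 \leq \pi^i \leq 2\pi$ from non-negativity transfers the $\limsup$ bound, Lemma~\ref{lem:minimality-of-pi1-pi2} gives minimality and hence subadditivity of $\pi^1,\pi^2$, and Lemma~\ref{lem:lipschitz} then yields Lipschitz continuity of all three functions. The only difference is the order of the steps, which is immaterial.
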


\begin{proof}
By Lemma \ref{lem:minimality-of-pi1-pi2}, $\pi^1, \pi^2$ are minimal and thus
subadditive. Since we assume $\pi^1, \pi^2 \geq 0$, $\pi = \frac{1}{2}(\pi^1 +
\pi^2)$ implies that $\pi^i \leq 2\pi$ for $i = 1,2$. Therefore if
$\limsup_{h\to 0}\lvert \frac{\pi(h)}{h} \rvert = L < \infty$, then
$\limsup_{h\to 0}\lvert \frac{\pi^i(h)}{h} \rvert \leq 2L< \infty$ for
$i=1,2$. 
Applying Lemma~\ref{lem:lipschitz}, we get Lipschitz continuity for all three
functions.
\end{proof}

\subsection{Finitely generated reflection groups from additivity relations}        \label{section:Reflection-group-lemma}

We need to study the pairs $(u,v)$ where the subadditivity condition is
satisfied at equality, i.e., $\pi(u) + \pi(v) = \pi(u+ v)$, or,
equivalently, $\Delta\pi(u,v)=0$.  By
Lemma~\ref{lem:tightness}, these additivity relations also hold for
subadditive functions $\pi^1, \pi^2$ if $\pi = \frac12(\pi^1+\pi^2)$.  By
introducing the difference function (perturbation) $\bar\pi = \pi^1 - \pi$, we
can write $\pi^1 = \pi + \bar\pi$ and $\pi^2 = \pi - \bar\pi$.  Then it
follows  that the same
additivity relations also hold for~$\bar\pi$. 

The standard way to use these additivity relations is via the Interval Lemma
(Lemma~\ref{lem:interval_lemma}), where one looks for closed non-degenerate
intervals $U$, $V$, and $W=U+V$ such that
\begin{displaymath}
  \pi(u) + \pi(v) =
  \pi(u+v)\quad
  \text{for all $u \in U$ and $v\in V$ with $u+v\in W$.} 
\end{displaymath}
We will follow this standard way in section~\ref{section:AI}, where we also
extend it by a ``patching'' technique to cases where $W$ is a subset of the
Minkowski sum $U+V$.  

In this section, we develop a new way to use these additivity relations, which
complements the use of the Interval Lemma. 
Here we consider such relations when one of 
$U$, $V$, or $W$ is a single point, instead of a non-degenerate interval, and
$W$ is not necessarily the Minkowski sum of~$U$ and $V$. 
Let us assume that 
$\pi\colon\R\to\R$ satisfies finitely many classes of relations of the type
\begin{subequations} 
  \label{eq:translation-and-reflection-equations}
  \begin{equation}\label{eq:translation-equation}
    \pi(t_i) + \pi(y) = \pi(t_i + y), \quad i=1,\dots,m
  \end{equation}
  for all $y$ in some interval $Y_i$ and
  \begin{equation}\label{eq:reflection-equation}
    \pi(x) + \pi(r_i-x) = \pi(r_i),\quad i=1,\dots,n
  \end{equation}
\end{subequations}
for all $x$ in some interval $X_i$, where $t_1, \ldots, t_m$ and $r_1, \ldots,
r_n$ are finitely many points in $\R$.  

Under some conditions we will be able to construct a perturbation~$\bar\pi$
that also satisfies all equations~\eqref{eq:translation-and-reflection-equations}.
Our strategy is to first construct a function~$\psi$ that satisfies more
conditions, namely equation~\eqref{eq:translation-equation} for \emph{all} $y \in
\R$ and equation~\eqref{eq:reflection-equation} for \emph{all} $x\in \R$. 

For this construction we use methods of group theory
\cite{hall1976grouptheory}, which provide 
fundamental insights into the structure of the perturbations.  Readers who are
unfamiliar with the terminology of group theory may skip
the following development and verify Lemma~\ref{lemma:our-equivariant-psi} below
by elementary means. This will be sufficient for following the proofs of the main
theorems of this paper, which are proved in section~\ref{section:proofs-main}.
However, the construction of the extreme function with irrational breakpoints
in section~\ref{sec:irrational} requires the group theoretic tools of this section.

We consider a subgroup of the group $\Aff(\R)$ of invertible affine linear transformations of~$\R$ as follows.
\begin{definition}
  For a point $r \in \R,$ define the \emph{reflection} $\rho_r\colon \R\to \R$, $x
  \mapsto r-x$.  For a vector $t \in \R,$ define the \emph{translation}
  $\tau_t\colon \R\to \R$, $x \mapsto x + t$.  
\end{definition}

\begin{figure}[t!]
\centering
(a) \ifpdf
\input{figures/reflectionGroup1d.pdftex_t}
\else
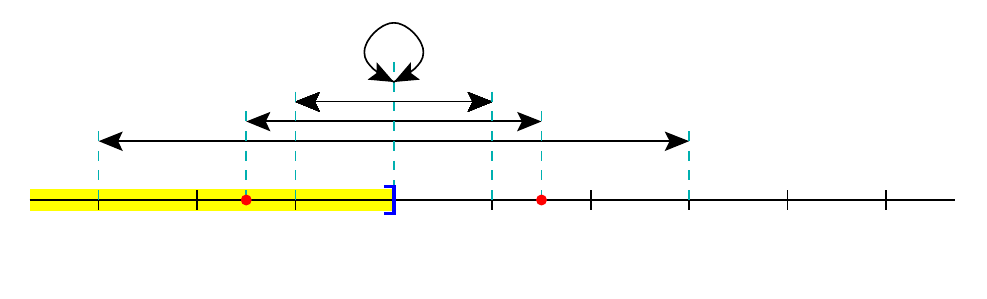
\fi\par
(b) \ifpdf
\input{figures/reflectionGroup1d-trans.pdftex_t}
\else
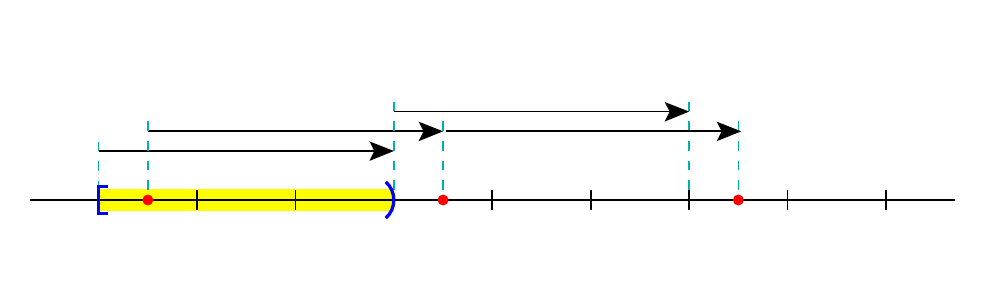
\fi\par
(c) \ifpdf
\input{figures/reflectionGroup1d-2refs.pdftex_t}
\else
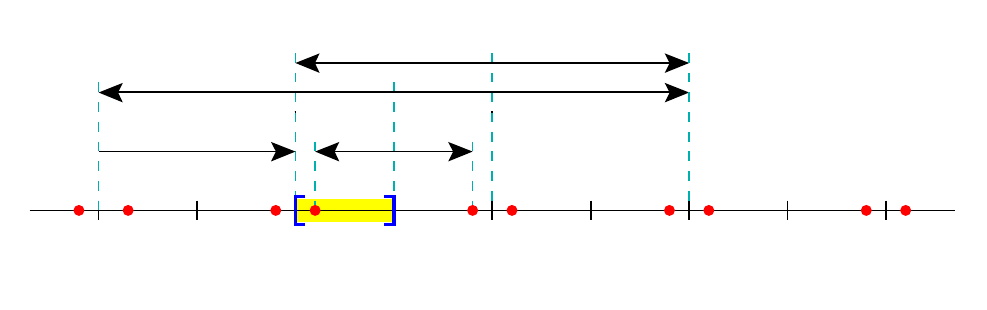
\fi
\caption{Finitely generated reflection groups and their major features.
  Forward arrows indicate translations~$\tau_t$ by a number~$t$, bidirected arrows indicate
  point reflections~$\rho_r$ about the invariant point~$\frac{r}2$.  
  The red filled circles mark points that form an
  orbit $\Gamma(x)$ of a typical point~$x$ under the reflection group~$\Gamma$. 
  The blue-yellow interval is a fundamental domain~$V^+$ of
  the reflection group, i.e., a system of representatives of the orbits.
  (a) The group~$\Gamma$ generated by a single point reflection~$\rho_{3/4}$.
  The fundamental domain is an unbounded closed interval.
  (b) The group~$\Gamma$ generated by a single translation~$\tau_{3/8}$.  The
  fundamental domain is a half-open interval.
  (c) The group~$\Gamma$ generated by two reflections $\rho_{r_1} =
  \rho_{3/4}$ and $\rho_{r_2}=\rho_{1}$.  Its normal
  subgroup~$\Tau$ of translations is generated by the
  translation~$\tau_t=\tau_{r_2-r_1}=\rho_{r_2}\circ\rho_{r_1} = \tau_{1/4}$. 
  The fundamental domain given by Lemma~\ref{lemma:phi} 
  is the closed interval $[\frac12 r_1, \frac12(r_1+t)]=[\frac14,\frac38]$. 
}\label{fig:reflection-groups}
\end{figure}

Given a finite number of points $r_1,\dots,r_n$ and a finite number of
vectors~$t_1,\dots,t_m$, we will define the subgroup $$\Gamma= \langle
\rho_{r_1},\dots,\rho_{r_n}, \tau_{t_1},\dots,\tau_{t_m}\rangle$$  
that is generated by the listed translations and reflections.
Figure~\ref{fig:reflection-groups} shows examples of such finitely generated
reflection groups.

Let $r,s,w,t\in \R$.  Each reflection is an involution: $\rho_r \circ \rho_r =\textit{id}$,  two reflections give one translation: $\rho_r \circ \rho_s =
\tau_{r-s}$.  Thus, if we assign a \emph{character} $\chi(\rho_r) = -1$ to
every reflection and $\chi(\tau_t) = +1$ to every translation, then this
extends to a \emph{group character} of~$\Gamma$, that is, a group homomorphism
$\chi\colon \Gamma\to\{\pm1\}\subset\C^\times$.  

On the other hand, not all pairs of reflections need to be
considered: $\rho_{s}\circ \rho_{w} = (\rho_{s}\circ \rho_r) \circ (\rho_r
\circ \rho_{w}) = (\rho_{r}\circ \rho_s)^{-1} \circ (\rho_r \circ \rho_{w})$.
Thus the subgroup~$\Tau = \ker(\chi) = \{\, \gamma \in \Gamma \st \chi(\gamma) = +1\,\}$ 
of translations in~$\Gamma$ is generated as
follows.  $$\Tau = 
\langle \tau_{r_2-r_1}, \dots, \tau_{r_n-r_1}, \tau_{t_1},\dots,\tau_{t_m}
\rangle.$$  It is \emph{normal} in~$\Gamma$, as it is stable by conjugation by any
reflection: $\rho_{r} \circ \tau_t \circ \rho_r^{-1} = \tau_{-t}$.  

In the following, we assume that $n\geq1$, i.e., at least one of the
generators is a reflection.
Now, if $\gamma \in \Gamma$ is
not a translation, i.e., $\chi(\gamma) = -1$, then it is generated by an odd
number of reflections, and 
thus can be written as $\gamma = \tau \circ \rho_{r_1}$ with $\tau\in \Tau$.  Thus $\Gamma /
\Tau \cong \langle\rho_{r_1}\rangle$ is of order~$2$. In short, we have the following
lemma. 
\begin{lemma}  \label{lemma:semidirect}
  Let $n\geq1$.  Then the group $\Gamma=\langle
  \rho_{r_1},\dots,\rho_{r_n}, \tau_{t_1},\dots,\tau_{t_m}\rangle$ is the
  semidirect product $\Tau \rtimes \langle \rho_{r_1} \rangle$,  
  where the (normal) subgroup of translations is of index~$2$ in~$\Gamma$ and
  can be written as $$\Tau = \{\, \tau_t \st t \in 
  \Lambda\,\},$$ where $\Lambda$ is the additive subgroup of~$\R$ generated
  by $r_2-r_1, \dots, r_n-r_1,
  {t_1},\dots,{t_m}$, 
  $$\Lambda = {\langle r_2-r_1, \dots, r_n-r_1,
  {t_1},\dots,{t_m} \rangle}_\Z \subseteq \R.$$%
\end{lemma}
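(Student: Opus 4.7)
The plan is to verify the three claims packaged in the statement — (i) $\Tau$ has index~$2$ in $\Gamma$, (ii) $\Tau = \{\,\tau_t \st t\in\Lambda\,\}$, and (iii) $\Gamma = \Tau \rtimes \langle\rho_{r_1}\rangle$ — by combining the character~$\chi$ with a normal-form reduction for words in the generators.

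First I would use the character $\chi\colon\Gamma\to\{\pm1\}$ already defined before the statement. Since $n\geq 1$, $\chi(\rho_{r_1})=-1$ and $\chi(\tau_{t_j})=+1$, so $\chi$ is surjective and $\Tau=\ker\chi$ is normal of index~$2$ in $\Gamma$; this gives (i) immediately. Moreover $\rho_{r_1}\notin \Tau$, so $\langle\rho_{r_1}\rangle\cap\Tau=\{\mathrm{id}\}$ and $\Gamma=\Tau\cup\rho_{r_1}\Tau$, which is exactly the internal semidirect product decomposition in (iii).

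For (ii) the forward inclusion $\{\,\tau_t \st t\in\Lambda\,\}\subseteq\Tau$ is essentially by definition: the translations $\tau_{t_j}$ and $\tau_{r_i-r_1}=\rho_{r_i}\circ\rho_{r_1}$ all lie in $\Gamma$, and they all map to $+1$ under $\chi$, so the subgroup they generate lies in $\Tau$; and this subgroup is exactly $\{\,\tau_t \st t\in\Lambda\,\}$ since $\tau_a\circ\tau_b=\tau_{a+b}$ and $\tau_a^{-1}=\tau_{-a}$. For the reverse inclusion I would use the two rewriting identities
\[
\rho_{r_i}=\tau_{r_i-r_1}\circ\rho_{r_1},
\qquad
\rho_{r_1}\circ\tau_s=\tau_{-s}\circ\rho_{r_1},
\]
together with $\rho_{r_1}^2=\mathrm{id}$, to reduce any word in the generators and their inverses to the canonical form $\tau_s\circ\rho_{r_1}^{\epsilon}$ with $\epsilon\in\{0,1\}$. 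The first identity replaces every $\rho_{r_i}$ in the word by $\tau_{r_i-r_1}\circ\rho_{r_1}$ (introducing only elements of $\Lambda$), and the second lets one shuttle all remaining $\rho_{r_1}$ factors to the right, merging consecutive pairs via $\rho_{r_1}^2=\mathrm{id}$. The translation parameter $s$ produced along the way is a $\Z$-linear combination of $r_2-r_1,\dots,r_n-r_1,t_1,\dots,t_m$ — note that the sign flip in the second identity is harmless because $\Lambda$ is closed under negation — and thus $s\in\Lambda$. An element lies in $\Tau$ iff $\chi$ sends it to $+1$, i.e.\ iff $\epsilon=0$ in its canonical form, giving $\Tau\subseteq\{\,\tau_t \st t\in\Lambda\,\}$.

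The only mildly delicate step is carrying out the canonical-form reduction cleanly; this is a straightforward induction on word length using the two identities above, so no serious obstacle is expected. Once (i) and (ii) are in hand, (iii) follows from the index-$2$ splitting already established.
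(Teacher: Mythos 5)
Your proof is correct and follows essentially the same route as the paper: the character $\chi$ gives normality and index~$2$ of $\Tau=\ker\chi$, and the identities $\rho_{r_i}=\tau_{r_i-r_1}\circ\rho_{r_1}$, $\rho_{r_1}\circ\tau_s=\tau_{-s}\circ\rho_{r_1}$, $\rho_{r_1}^2=\mathrm{id}$ reduce every element to the form $\tau_s\circ\rho_{r_1}^{\epsilon}$ with $s\in\Lambda$. Your explicit normal-form induction is just a more detailed rendering of the paper's terse remark that pairs of reflections can be rewritten through $\rho_{r_1}$, so no further changes are needed.
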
%
The additive subgroup~$\Lambda$ distinguishes two qualitatively different
types of finitely generated reflection groups.  If the additive
subgroup~$\Lambda$ is discrete, we are able to construct continuous functions
that are $\Gamma$-equivariant, i.e., invariant under all translations
in~$\Tau$ and odd w.r.t.\ all reflections $\rho_{r_1},\dots,\rho_{r_n}$. We
will use these functions in section~\ref{sec:non-extreme-by-perturbation}.
On the other hand, if $\Lambda$ is not discrete, we can show that there is no
nontrivial continuous $\Gamma$-equivariant function; we prove a stronger version
of this statement in section~\ref{sec:irrational} and use it to show the extremality of
a certain minimal function. 

\begin{figure}[t!]
  \centering
  \ifpdf
\input{figures/reflectionGroup1d-2refs-equi.pdftex_t}
\else
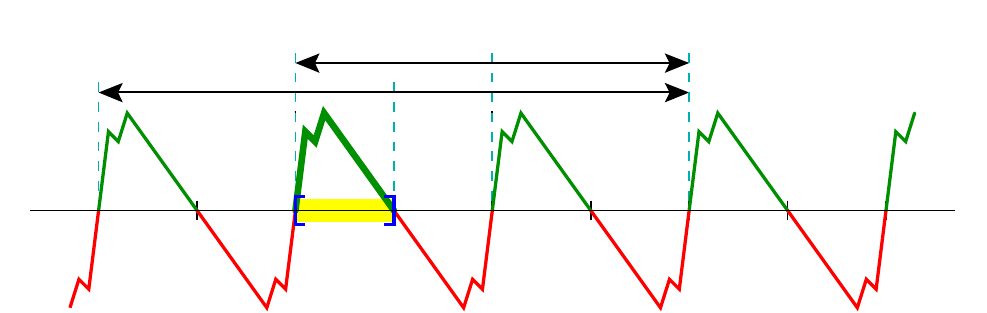
\fi
  \caption{A~$\Gamma$-equivariant function~$\psi$ for the reflection group in Figure~\ref{fig:reflection-groups}\,(c),
    constructed according to Lemma~\ref{lemma:phi} by choosing an arbitrary
    function on the fundamental domain~$V^+$ that is zero on the boundary
    (endpoints) of~$V^+$ (heavy green lines) and extending to all of~$\R$
    (lighter green and red lines) equivariantly.}
  \label{fig:equivariant-general}
\end{figure}

\begin{definition}
 The \emph{orbit} of a point~$x\in\R$ under the group~$\Gamma$ is the set
 \begin{displaymath}
   \Gamma(x) = \{\, \gamma(x) \st \gamma\in\Gamma \,\}.
 \end{displaymath}
\end{definition}
Figure~\ref{fig:reflection-groups} shows orbits~$\Gamma(x)$ of typical points~$x$.
\begin{definition}
 A \emph{fundamental domain} of~$\Gamma$ is a subset of~$\R$ that is a
    system of representatives of the orbits. 
\end{definition}

If $\Lambda$ is discrete, then since $\Lambda\subset\R$, 
we know that $\Lambda$ is generated by one number~$t$. (We
denote this fact by writing $\Lambda = \langle t\rangle_\Z$.) 
We then find a nondegenerate
interval that serves as a fundamental domain of~$\Gamma$; see again
Figure~\ref{fig:reflection-groups} for examples.  We then construct continuous
$\Gamma$-equivariant functions as follows. 
\begin{lemma}[\unboldmath Construction of $\Gamma$-equivariant functions]
  \label{lemma:phi}
  Suppose that the additive subgroup $\Lambda$ defined above is discrete, and
  let $t \in \Lambda$ such that $\Lambda = \langle t\rangle_\Z$.
  
  Then $V^+ = [\frac12(r_1-t),\frac12 r_1]$ is a fundamental domain of~$\Gamma$.
  Let $\psi\colon V^+\to \R$ be any function such that
  $\psi\big|_{\partial V^+} = 0$,
  where $\partial V^+$ denotes the boundary of~$V^+$.
  Then the \emph{equivariance formula} 
  \begin{equation}\label{eq:equivariance}
    \psi(\gamma(x)) = \chi(\gamma) \psi(x)
    \quad\text{for $x\in \R$ and $\gamma \in \Gamma$}
  \end{equation}
  gives a well-defined extension of $\psi$ to all of~$\R$.
  It satisfies equations~\eqref{eq:translation-and-reflection-equations} for
  all $x,y\in \R$.
\end{lemma}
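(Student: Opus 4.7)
The plan is to verify three claims in succession: that $V^+$ is a fundamental domain for the $\Gamma$-action on $\R$, that the equivariance formula yields a well-defined extension of $\psi$ to $\R$, and that the extended $\psi$ satisfies the functional equations \eqref{eq:translation-and-reflection-equations} for all $x, y \in \R$.

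For the first claim, I would use the semidirect decomposition $\Gamma = \Tau \rtimes \langle\rho_{r_1}\rangle$ from Lemma~\ref{lemma:semidirect}. The half-open interval $[(r_1-t)/2,\,(r_1+t)/2)$ is a fundamental domain for $\Tau = \langle\tau_t\rangle_\Z$, and its upper half is folded onto the lower half $V^+$ by the reflection $\rho_{r_1}$; hence every $\Gamma$-orbit meets $V^+$. For essential uniqueness, a short case analysis on the connecting element rules out $\gamma = \tau_{kt}$ with $k \neq 0$ (by the diameter bound $\lvert x_1-x_2\rvert \leq t/2 < \lvert k\rvert t$), while $\gamma = \tau_{kt}\circ\rho_{r_1}$ gives $x_1 + x_2 = r_1 + kt$ which, combined with $x_1, x_2 \in [(r_1-t)/2, r_1/2]$, forces $k \in \{-1, 0\}$ and $x_1 = x_2 \in \partial V^+$. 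As a by-product, the stabilizer of any interior point of $V^+$ is trivial, while the stabilizer of a boundary point consists of the identity together with the unique reflection in $\Gamma$ fixing it.

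For the second claim, define $\psi(x) := \chi(\gamma)\psi(\gamma(x))$ for any $\gamma$ with $\gamma(x) \in V^+$; this is internally consistent because $\chi^2 = 1$. Well-definedness reduces to the stabilizer analysis: for interior images the connecting $\gamma$ is unique, so the character is unambiguous; for boundary images the two possible characters may differ in sign, but the hypothesis $\psi|_{\partial V^+} = 0$ makes both assignments yield $0$. Equivariance is then formal: for any $\gamma \in \Gamma$ and any $\alpha$ with $\alpha(x) \in V^+$, the element $\beta := \alpha\gamma^{-1}$ satisfies $\beta(\gamma(x)) = \alpha(x) \in V^+$, so
\begin{equation*}
  \psi(\gamma(x)) = \chi(\beta)\psi(\alpha(x)) = \chi(\alpha)\chi(\gamma)\psi(\alpha(x)) = \chi(\gamma)\psi(x).
\end{equation*}

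For the third claim, equivariance immediately collapses the functional equations: $\psi(t_i + y) = \chi(\tau_{t_i})\psi(y) = \psi(y)$ reduces \eqref{eq:translation-equation} to the single requirement $\psi(t_i) = 0$, and $\psi(r_i - x) = \chi(\rho_{r_i})\psi(x) = -\psi(x)$ reduces \eqref{eq:reflection-equation} to $\psi(r_i) = 0$. Since $t_i = \tau_{t_i}(0)$ and $r_i = \rho_{r_i}(0)$ both lie in the $\Gamma$-orbit of $0$, it suffices by equivariance to check $\psi(0) = 0$, which in turn follows from identifying $0$ with a boundary point of $V^+$ via an element of $\Gamma$—a direct computation shows the $\Gamma$-orbit of $\partial V^+$ equals $\{r_1/2 + jt/2 : j \in \Z\}$. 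The main delicacy is the essential-uniqueness and stabilizer analysis underpinning the first two claims, which together make the extension well-defined and neutralize the character sign ambiguity at boundary points; once those are in place, the functional equations come out verbatim from the equivariance formula.
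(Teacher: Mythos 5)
Your first two steps (fundamental domain and well-definedness of the equivariant extension) are correct and in fact supply the details that the paper's proof only sketches; up to that point you follow essentially the same route: $\rho_{r_1}$ folds $V^-=[\frac12 r_1,\frac12(r_1+t)]$ onto $V^+$, the translations in $\Tau$ tile $\R$ with copies of $V^+\cup V^-$, and the boundary condition $\psi|_{\partial V^+}=0$ neutralizes the sign ambiguity at the points with nontrivial stabilizer. Your reduction of \eqref{eq:translation-and-reflection-equations} to the single condition $\psi(0)=0$ (via $\psi(t_i)=\psi(0)$ and $\psi(r_i)=-\psi(0)$) is also the right way to isolate what remains to be checked.

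The gap is in the very last step: the claim that $0$ can be identified with a boundary point of $V^+$ by an element of $\Gamma$. Your orbit computation $\Gamma(\partial V^+)=\{\frac{r_1}{2}+j\frac{t}{2} \st j\in\Z\}$ is correct, but $0$ belongs to this set if and only if $r_1\in t\Z=\Lambda$, which is not among the hypotheses. For instance, with $n=m=1$, $r_1=\frac13$, $t_1=1$, one has $\Lambda=\Z$, $t=1$, $V^+=[-\frac13,\frac16]$, and $0$ is an \emph{interior} point of $V^+$; choosing $\psi$ with $\psi(0)\neq 0$ yields an equivariant extension with $\psi(t_1)+\psi(y)=\psi(0)+\psi(y)\neq\psi(t_1+y)$ and $\psi(x)+\psi(r_1-x)=0\neq-\psi(0)=\psi(r_1)$, so the step is not merely unproved—it fails without an extra assumption such as $r_1\in\Lambda$ (equivalently $0\in\Gamma(\partial V^+)$), or the additional requirement that $\psi$ vanish at the $V^+$-representative of $0$. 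To be fair, the paper's own proof is equally terse at this point: it simply asserts $\psi(t_i)=0$ and $\psi(r_i)=0$, which rests on the same implicit assumption; this is harmless for the paper because the lemma is only invoked for $\Gamma=\langle\rho_{1/q},\tau_{1/q}\rangle$, where $r_1=t=\frac1q\in\Lambda$ and $V^+=[0,\frac1{2q}]$, so that $0\in\partial V^+$ and $\psi(0)=0$ is forced, as recorded in Lemma~\ref{lemma:our-equivariant-psi}. In your write-up you should either state this hypothesis explicitly or verify $\psi(0)=0$ in the concrete application rather than claim it in general.
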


Figure~\ref{fig:equivariant-general} illustrates this construction.

\begin{proof}
  The reflection~$\rho_{r_1}$ sends $V^+$ to $V^-=[\frac12 r_1,
  \frac12(r_1+t)]$.  The sets $V^+$ and $V^-$ intersect in one point~$\frac12
  r_1$, which is invariant under~$\rho_{r_1}$. 
  Then the translations $\tau\in\Tau$ tile all of~$\R$ with copies of $V^+$ and
  $V^-$.  

  Thus $V^+$ is a fundamental domain for~$\Gamma$.
  Since on the boundary, $\psi=0$, the extension is well-defined. Since $\psi(t_i) = 0$ and
  $\psi(r_i) = 0$, equation~\eqref{eq:equivariance}
  implies~\eqref{eq:translation-and-reflection-equations}. 
\end{proof}

\begin{remark}
  The same construction works if we consider reflection groups of~$\R^k$, when
  $\Lambda$ is a lattice of~$\R^k$.   The fundamental
  domain $V^+$ can be chosen as one half of a Voronoi cell or one half of the
  fundamental parallelepiped of the lattice~$\Lambda$.  This will become important in
  \cite{basu-hildebrand-koeppe:equivariant-2,bhk-IPCOext}. 
\end{remark}

Of particular interest to us is the case of the
reflection group~$\Gamma = \langle \rho_{1/q}, \tau_{1/q} \rangle$, where the
integer $q$ is the least common multiple of all denominators 
of the (rational) breakpoints of a piecewise linear function~$\pi$.  Then
$\Lambda=\tfrac1q\Z$, and 
$\Gamma$ contains all reflections 
$\rho_g$ and translations~$\tau_g$ for $g \in \tfrac{1}{q} \Z$ and thus all reflections and
translations corresponding to all breakpoints of~$\pi$.  

Let the function
$\psi\colon [0,\tfrac{1}{2q}] \to \R$ be given by 
connecting the points  $(0,0), (\tfrac{1}{4q}, 1), (\tfrac{1}{2q},0)$, and
then extending $\psi$ to all of~$\R$ using Lemma~\ref{lemma:phi}, 
where $V^+ = [0,\tfrac{1}{2q}]$.
This gives the points $(\tfrac{3}{4q},-1)$ and $(1,0)$.  The function is
periodic with period~$1$. See Figure \ref{figure:phi}.   
\begin{lemma}\label{lemma:our-equivariant-psi}
  The function $\psi\colon\R\to\R$ constructed above has the following properties:
  \begin{enumerate}[\rm(i)]
  \item $\psi(g) = 0$ for all $g\in \tfrac{1}{q} \Z$,
  \item $\psi(x)= - \psi(\rho_{g}(x)) = - \psi(g - x)$ for all $g \in \tfrac{1}{q} \Z, x \in
    [0,1]$,
  \item $\psi(x) = \psi(\tau_g(x)) = \psi(g + x)$ for all $g \in \tfrac{1}{q} \Z, x \in
    [0,1]$,
  \item $\psi$ is piecewise linear with breakpoints in $\tfrac{1}{4q}\Z$.
  \end{enumerate}
\end{lemma}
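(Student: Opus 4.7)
The plan is to recognize the construction as a direct instantiation of Lemma~\ref{lemma:phi} and then read off the four claimed properties from the equivariance formula. Specifically, I would set $n=m=1$ with $r_1=\tfrac{1}{q}$ and $t_1=\tfrac{1}{q}$, so that the reflection group produced by the lemma is exactly $\Gamma=\langle\rho_{1/q},\tau_{1/q}\rangle$, the additive subgroup of Lemma~\ref{lemma:semidirect} is $\Lambda=\langle r_2-r_1,\dots,t_1\rangle_\Z=\langle\tfrac{1}{q}\rangle_\Z=\tfrac{1}{q}\Z$ (discrete, with generator $t=\tfrac{1}{q}$), and the fundamental domain produced by Lemma~\ref{lemma:phi} is
\begin{equation*}
V^+=\bigl[\tfrac{1}{2}(r_1-t),\tfrac{1}{2}r_1\bigr]=\bigl[0,\tfrac{1}{2q}\bigr].
\end{equation*}
The explicit piecewise linear $\psi$ through $(0,0),(\tfrac{1}{4q},1),(\tfrac{1}{2q},0)$ vanishes on $\partial V^+=\{0,\tfrac{1}{2q}\}$, which is the hypothesis needed to apply the construction, so the equivariant extension exists and is well-defined on all of~$\R$.

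For properties (ii) and (iii) the core observation is that for every $g\in\tfrac{1}{q}\Z$ the affine maps $\tau_g$ and $\rho_g$ both lie in~$\Gamma$. Translation is immediate since $g\in\Lambda$. For the reflection, I would use the identity $\rho_g=\tau_{g-r_1}\circ\rho_{r_1}$, noting that $g-r_1\in\tfrac{1}{q}\Z=\Lambda$, so $\tau_{g-r_1}\in\Tau\subset\Gamma$. The character is then $\chi(\tau_g)=+1$ and $\chi(\rho_g)=\chi(\tau_{g-r_1})\chi(\rho_{r_1})=-1$, and the equivariance formula~\eqref{eq:equivariance} yields $\psi(g+x)=\psi(x)$ and $\psi(g-x)=-\psi(x)$ respectively. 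Property (i) then drops out either by applying (iii) at $x=0$ with the known value $\psi(0)=0$, or directly by noting that $g=\tau_g(0)$ is in the orbit of the boundary point~$0\in\partial V^+$ where $\psi$ vanishes.

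For property (iv), I would argue that the images $\gamma(V^+)$ with $\gamma\in\Gamma$ tile~$\R$ into closed intervals of length $\tfrac{1}{2q}$ whose endpoints lie in $\tfrac{1}{2q}\Z$, since the only generators involved are translations by multiples of $\tfrac{1}{q}$ and reflections in points of $\tfrac{1}{2q}\Z$. On each such image, the equivariance formula makes $\psi$ an affine image of the two linear pieces of $\psi\big|_{V^+}$, so the only interior breakpoint of each tile is the $\gamma$-image of $\tfrac{1}{4q}$. Because every element of $\Gamma$ stabilizes $\tfrac{1}{4q}\Z$ setwise, all breakpoints of the extended $\psi$ lie in $\tfrac{1}{4q}\Z$.

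I do not anticipate a serious obstacle: the main small subtlety is verifying that $\rho_g\in\Gamma$ for every $g\in\tfrac{1}{q}\Z$ and computing the character correctly, since the lemma only gives $\rho_{r_1}$ among the reflection generators; this is handled by the single identity $\rho_g=\tau_{g-r_1}\circ\rho_{r_1}$. Everything else is bookkeeping that follows from Lemmas~\ref{lemma:semidirect} and~\ref{lemma:phi}.
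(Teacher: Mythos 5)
Your proposal is correct and follows the same route as the paper, which simply remarks that all four properties follow from the equivariance formula of Lemma~\ref{lemma:phi}; your argument merely fills in the details (the identity $\rho_g=\tau_{g-r_1}\circ\rho_{r_1}$ showing $\rho_g,\tau_g\in\Gamma$ with the right characters, and the tiling/breakpoint bookkeeping) that the paper leaves implicit.
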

\begin{proof}
  The properties follow directly from the equivariance formula~\eqref{eq:equivariance}.
\end{proof}

\begin{figure}[t!]
  \centering
  \ifpdf
\input{figures/reflectionGroup1d-1q-equi.pdftex_t}
\else
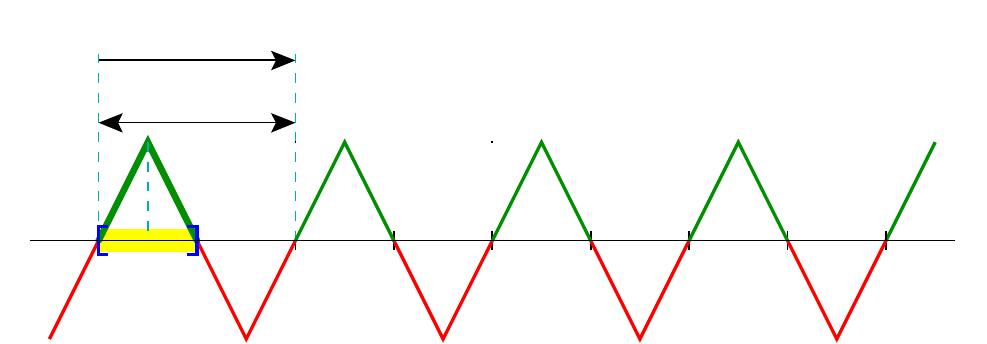
\fi
  \caption{A $\Gamma$-equivariant perturbation for the reflection
    group~$\Gamma=\langle\rho_{1/q}, \tau_{1/q} \rangle$.  The continuous piecewise
    linear function $\psi$ is defined on the fundamental domain
    $V^+=[0,\tfrac1{2q}]$ (blue-yellow interval) by connecting the points $(0,0),
    (\tfrac{1}{4q},1), (\tfrac{1}{2q},0)$ (heavy green lines) and then extended
    according to Lemma~\ref{lemma:phi} to a $\Gamma$-equivariant function
    (lighter green and red lines).}  
 \label{figure:phi}
  \end{figure} 

In section~\ref{sec:non-extreme-by-perturbation} we will modify the
function~$\psi$ to become a suitable perturbation~$\bar\pi$.

\section{A finite system of linear equations} 
\label{section:system}

A technique for investigating whether a minimal valid function~$\pi$ is
extreme is to test whether a certain finite-dimensional system of linear equations
has a unique solution.  
We construct the system in section~\ref{section:def}.
If the system does not have unique solution, we can always construct perturbations
of~$\pi$ that prove that $\pi$ is not extreme
(section~\ref{section:system-necessary}
).  However, the other direction
does not hold in general.  A main contribution of the present paper is to find
the precise conditions under which it does hold.  
We take the first step in section~\ref{section:system-AI}, which prepares the
complete solution in section~\ref{section:proofs-main}.

\subsection{Definition of the system}
\label{section:def} 

In this subsection we suppose that $\pi$ is a minimal valid function 
that is piecewise linear with breakpoints in~$B$. 
We will set up a system that tests whether there exist distinct minimal functions
$\pi^1,\pi^2$ such that $\pi = \frac12(\pi^1+\pi^2)$
that are piecewise linear functions with breakpoints
in $\B$.  

Suppose $\pi^1$ and $\pi^2$ are minimal functions such that  $\pi =
\frac12(\pi^1+\pi^2)$ that are piecewise linear functions with breakpoints
in $\B$. 
For a face $F\in\Delta\P_\B$, define $$\Delta\pi^i_F(x,y) = \pi^i_I(x)
+\pi^i_J(y) - \pi^i_K(x+ y),$$ where $F = F(I,J,K) = \{\,(x,y) \st x \in I,\, y \in
J,\, x+ y \in K\,\}$ and $I,J,K \in \I_{\B,\edge} \cup \I_{\B,\point}$.  
Then the following is a direct corollary of
Lemma~\ref{lemma:tight-implies-tight}. 

\begin{corollary}~\label{cor:tight-implies-tight-with-pi1F}
Let $F \in \Delta\P_\B$ and let $(u,v) \in F$.  If
$\Delta\pi_F(u,v)=0$ then $\Delta\pi_F^i(u,v) = 0$ for $i=1,2$. 
\end{corollary}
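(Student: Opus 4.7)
The plan is to combine Lemma~\ref{lemma:tight-implies-tight} with Lemma~\ref{lemma:face-limits}, exploiting the hypothesis that $\pi^1$ and $\pi^2$ share the same breakpoint set~$B$ as~$\pi$, so that the same two-dimensional polyhedral complex~$\Delta\P_\B$ is compatible with all three functions.

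First I would observe that, because $\pi^i$ is piecewise linear with breakpoints in~$B$, for any face $F=F(I,J,K)\in\Delta\P_\B$ the function $\Delta\pi^i$ coincides on $\relint(F)$ with the affine function $\Delta\pi^i_F(x,y) = \pi^i_I(x)+\pi^i_J(y)-\pi^i_K(x+y)$, exactly as in the setup for $\pi$ preceding Lemma~\ref{lemma:face-limits}. Hence Lemma~\ref{lemma:face-limits}, applied verbatim to $\pi^i$ in place of $\pi$, yields
\[
\Delta\pi^i_F(u,v) \;=\; \lim_{\substack{(x,y)\to(u,v)\\ (x,y)\in\relint(F)}} \Delta\pi^i(x,y)
\qquad\text{for $i=1,2$.}
\]

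Next I would invoke Lemma~\ref{lemma:tight-implies-tight}, whose hypothesis $\Delta\pi_F(u,v)=0$ is precisely what is assumed. That lemma states that under this assumption the right-hand side limit above is zero for both $i=1,2$. Chaining the two identities immediately gives $\Delta\pi^i_F(u,v)=0$, as required.

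There is no serious obstacle here: the corollary is essentially a bookkeeping step that upgrades the limit statement of Lemma~\ref{lemma:tight-implies-tight} into a pointwise identity for the affine face-functions $\Delta\pi^i_F$, using the fact that piecewise linearity of~$\pi^i$ on the complex~$\Delta\P_\B$ makes the limit value coincide with the evaluation of $\Delta\pi^i_F$ at the boundary point. The only thing to double-check is that Lemma~\ref{lemma:face-limits} applies to each $\pi^i$, which is immediate from the assumption that $\pi^i$ is piecewise linear with breakpoints in~$B$.
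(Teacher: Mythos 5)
Your proof is correct and matches the paper's (implicit) argument: the paper states the corollary as a direct consequence of Lemma~\ref{lemma:tight-implies-tight}, with exactly the step you make explicit, namely that since $\pi^1,\pi^2$ are piecewise linear with breakpoints in~$\B$, Lemma~\ref{lemma:face-limits} applies to them and identifies $\Delta\pi^i_F(u,v)$ with the limit that Lemma~\ref{lemma:tight-implies-tight} shows to be zero. Nothing is missing.
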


We now set up a system of finitely many linear equations in finitely many
variables that $\pi$ satisfies and that
$\pi^1$ and $\pi^2$ must also satisfy under the assumption that they are
piecewise linear functions with breakpoints 
in $\B$.

Let $\varphi$ be an arbitrary piecewise linear function with breakpoints in
$\B$ that is periodic modulo~$\Z$.  
Then, by definition, $\varphi(x) = \varphi_I(x)$ for $x \in \relint(I)$, where 
$\varphi_I(x) = m_I x + b_I$ for $I \in \I_{\B,\edge}$ and $x\in\R$ and
$\varphi_I(x) = b_I$ for all $I \in \I_{\B,\point}$ and $x\in\R$.  
For every $F\in \Delta\P_\B$, let $\Delta\varphi_F(x,y) = \varphi_I(x)
+\varphi_J(y) - \varphi_K(x+ y)$ for $x,y\in\R$, where $F = F(I,J,K) = \{\,(x,y) \st x \in I, y \in J, x+ y \in K\,\}$ and $I,J,K \in \I_{\B,\edge} \cup \I_{\B,\point}$.  
Consider such functions~$\varphi$ that satisfy the following system of linear
equations in terms of $m_I, b_I$ for $I \in \I_{\B,\edge}$ with $I\subseteq[0,1]$ and $b_I$ for $I
\in \I_{\B,\point}$ with $I\subseteq[0,1]$: 
\begin{equation}
\label{equation:system}
\begin{cases}
\varphi(0) = 0,\\
\varphi(f) = 1,\\
\varphi(1) = 0,\\
\begin{aligned}
  \Delta\varphi_F(u,v) = 0 
  \quad
  & \text{for all $(u,v) \in \verts(F)$ with $\Delta\pi_F(u,v) = 0$} \\
  & \text{where $F \in \Delta\P_\B$, $F\subseteq[0,1]^2$.}
  \end{aligned}
\end{cases}
\end{equation}

 Since $\varphi=\pi$ satisfies the system of equations, we know that the system has a solution.

\subsection{Necessary condition for extremality}
\label{section:system-necessary}

We now prove the following theorem.
\begin{theorem}
If $\pi$ is a piecewise linear valid function with breakpoints in $\B$ and  the system of equations \eqref{equation:system} does not have a unique solution, then $\pi$ is not extreme.
\label{theorem:systemNotUnique}
\end{theorem}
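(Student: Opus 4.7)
The plan is to exploit the non-uniqueness of system~\eqref{equation:system} to explicitly construct a perturbation of~$\pi$ that certifies non-extremality. Since an extreme function must be minimal, we may assume $\pi$ is minimal, as otherwise the conclusion holds trivially. Let $\varphi$ be a solution of~\eqref{equation:system} with $\varphi \neq \pi$, and set $\bar\pi = \varphi - \pi$. Then $\bar\pi$ is a nonzero piecewise linear function with breakpoints in~$\B$, periodic modulo~$\Z$, and it satisfies the homogeneous analogue of~\eqref{equation:system}: $\bar\pi(0) = \bar\pi(f) = \bar\pi(1) = 0$ together with $\Delta\bar\pi_F(u,v) = 0$ for every vertex $(u,v)\in\verts(F)$ at which $\Delta\pi_F(u,v) = 0$, for $F\in\Delta\P_\B$, $F\subseteq[0,1]^2$. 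I will show that for all sufficiently small $\epsilon > 0$ the functions $\pi^1 = \pi + \epsilon\bar\pi$ and $\pi^2 = \pi - \epsilon\bar\pi$ are both minimal valid functions. Then $\pi = \tfrac12(\pi^1 + \pi^2)$ with $\bar\pi \neq 0$ produces two distinct valid functions with midpoint $\pi$, witnessing that $\pi$ is not extreme.

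Minimality of each~$\pi^i$ will be verified via Theorem~\ref{thm:minimal}, analyzing the complex $\Delta\P_\B$ face by face. The key structural observation is that $\Delta\pi_F$ is affine and non-negative on~$F$, and an affine function on a polytope that attains its minimum in the relative interior must be constant there, so either $\Delta\pi_F \equiv 0$ on $F$, or $\Delta\pi_F > 0$ on $\relint(F)$ with zero locus a proper sub-face of~$F$. In the first case, the homogeneous system forces $\Delta\bar\pi_F \equiv 0$ on~$F$, so $\Delta\pi^i_F \equiv 0$. In the second case, $\Delta\bar\pi_F$ vanishes at each vertex of the zero sub-face and hence, by affineness, on the entire zero sub-face; a direction-of-approach argument then shows that the ratio $|\Delta\bar\pi_F/\Delta\pi_F|$ is bounded on $F$ away from that zero sub-face. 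Choosing $\epsilon$ smaller than the reciprocal of the maximum such ratio over the finitely many faces in $[0,1]^2$ ensures $\Delta\pi^i_F \geq 0$ throughout $F$, and periodicity extends subadditivity to all of~$\R^2$. For the symmetry condition, the minimality test applied to~$\pi$ gives $\Delta\pi_F \equiv 0$ on every face $F \subseteq \{(x,y)\in[0,1]^2 \mid x+y\equiv f \pmod 1\}$, so the system forces $\Delta\bar\pi_F \equiv 0$ there as well, yielding $\bar\pi(x) + \bar\pi(f-x) = \bar\pi(f) = 0$ and hence $\pi^i(x) + \pi^i(f-x) = 1$.

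The main obstacle is proving non-negativity of $\pi^i$ at points $x$ where $\pi(x) = 0$, since then $\pi^i(x) = \pm\epsilon\bar\pi(x)$ is non-negative for both signs of $\epsilon$ only if $\bar\pi(x) = 0$. I will handle this via an arithmetic argument. For rational $x = p/q$ with $\pi(x) = 0$, iterated subadditivity $\pi(y+x) \leq \pi(y)$ combined with 1-periodicity collapses the chain
\begin{displaymath}
  \pi(y) \,\geq\, \pi(y+x) \,\geq\, \pi(y+2x) \,\geq\, \cdots \,\geq\, \pi(y+qx) \,=\, \pi(y)
\end{displaymath}
to equalities, so $\pi(y+x) = \pi(y)$ for every~$y$ and $\Delta\pi(y,x) \equiv 0$ in~$y$. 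Every face $F\in\Delta\P_\B$ meeting the horizontal line $\{y' = x\}$ in its relative interior is therefore fully additive, so the system forces $\Delta\bar\pi_F \equiv 0$ on each such~$F$ and consequently $\bar\pi(y+x) = \bar\pi(y) + \bar\pi(x)$ for all~$y$. Iterating $q$ times and using 1-periodicity of~$\bar\pi$ yields $\bar\pi(y) = \bar\pi(y+qx) = \bar\pi(y) + q\bar\pi(x)$, whence $\bar\pi(x) = 0$, as required. Combining all the above verifications, $\pi^1$ and $\pi^2$ are distinct minimal valid functions with average~$\pi$, completing the proof.
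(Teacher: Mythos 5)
Your core construction is the same as the paper's: take a nonzero element $\bar\pi$ of the kernel of \eqref{equation:system}, set $\pi^{1,2}=\pi\pm\epsilon\bar\pi$ for small $\epsilon>0$, and verify minimality face by face on $\Delta\P_\B$, using that the homogeneous system transfers every tight vertex relation to $\bar\pi$ while strictly subadditive vertices leave a positive margin. Your face-by-face bound $|\Delta\bar\pi_F|\le C\,\Delta\pi_F$ is true, but the clean justification is precisely the paper's vertex argument rather than a ``direction-of-approach argument'': $C\,\Delta\pi_F\pm\Delta\bar\pi_F$ is affine and non-negative at every vertex of the polytope $F$ (zero at the tight vertices by the system; at the strict vertices choose $C$ large enough, or, as the paper does, take $\epsilon$ smaller than the minimum positive vertex value of $\Delta\pi_F$ divided by $3\|\bar\pi\|_\infty$), hence non-negative on all of $F$. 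The paper packages exactly this reduction to vertex checks as Theorem~\ref{minimality-check}.

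The one genuine defect is your third paragraph. As written it has a hole: you only treat \emph{rational} points $x$ with $\pi(x)=0$, but the theorem is stated for an arbitrary breakpoint set $\B$ (the paper's section on the irrational function relies on this generality), and your chain $\pi(y)\ge\pi(y+x)\ge\dots\ge\pi(y+qx)=\pi(y)$ has no analogue for irrational $x$. One would need a separate argument that a piecewise linear minimal $\pi$ cannot vanish at an irrational point (it cannot: $\pi(nx)\le n\pi(x)=0$ for all $n\ge 1$ would make $\pi$ vanish on a dense set, which is incompatible with symmetry and piecewise linearity), but you never supply it. Fortunately the entire detour is unnecessary: once $\pi^i$ is subadditive, periodic modulo~$\Z$, and satisfies $\pi^i(0)=0$, non-negativity is automatic, since $\pi^i(x)<0$ would give $\pi^i(nx)\le n\,\pi^i(x)\to-\infty$ while $\pi^i$ is bounded, being piecewise linear and periodic; alternatively one can simply invoke Theorem~\ref{minimality-check}, which involves no separate non-negativity condition. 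So your proof reaches the right conclusion by essentially the paper's route, but paragraph three should either be deleted or completed to cover the irrational case.
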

\begin{proof}
  If $\pi$ is not minimal, then $\pi$ is not extreme.  Thus, in the following,
  we assume that $\pi$ is minimal. 
Suppose \eqref{equation:system} does not have a unique solution. Let 
\begin{displaymath}
  \{(\bar
m_I, \bar b_I)_{I\in \I_{\B,\edge}, I\subseteq[0,1]},\allowbreak (\bar b_I)_{I
  \in \I_{\B,\point}, I\subseteq[0,1]}\}
\end{displaymath}
be a non-trivial element in the kernel of the system above.  Let $\bar
\varphi$ be the piecewise linear function periodic modulo~$\Z$, given by
\begin{displaymath}
\bar
\varphi(x) =
\begin{cases}
  \bar m_I x + \bar b_I
  & \text{for $x \in \intr(I)$ where $I \in \I_{\B,\edge}$,
    $I\subseteq[0,1]$,}\\
  \bar b_I 
  & \text{for all $\{x\} = I \in
    \I_{\B,\point}$, $I\subseteq[0,1].$}
\end{cases}
\end{displaymath}
(The periodic extension from $[0,1]$ to $\R$ is well-defined because
$\bar\varphi(0) = \bar\varphi(1)$.) 

Then for any $\epsilon$, $\pi + \epsilon \bar\varphi$ also satisfies the system of equations. 
Let
$$
\epsilon = \min \{\,\Delta\pi_F(x,y)\neq 0 \st F \in \Delta\P_\B, \
F\subseteq[0,1]^2, \ (x,y) \in \verts(F)\,\},
$$ 
which is well-defined and positive as a minimum over a finite number of
positive values,
and set 
\begin{displaymath}
  \pi^1 = \pi + \frac{\epsilon}{3 ||\bar\varphi||_\infty} \bar\varphi,
  \quad
  \pi^2 = \pi - \frac{\epsilon}{3 ||\bar\varphi||_\infty} \bar\varphi.
\end{displaymath}
Note that $0 < ||\bar\varphi||_\infty < \infty$ since
$\bar\varphi$ comes from a non-trivial element in the kernel, and because it
is piecewise linear on a compact domain.  We claim that $\pi^1, \pi^2$ are
both minimal.  We show this for $\pi^1$, and $\pi^2$ is similar.  
Since $\pi$ satisfies the system~\eqref{equation:system} and $\bar\varphi$ is an
element of the kernel, $\pi^1$ satisfies the system~\eqref{equation:system} as
well. In particular, we have $\pi^1(0) = 0, \pi^1(f)= 1, \pi^1(1) = 0$. 

Next, we show that $\pi^1$ satisfies the symmetry test of
Theorem~\ref{minimality-check}.   
To this end, first note that $\varphi(f) = 1$ is an equation
in~\eqref{equation:system}.
Also, since $\pi$ is minimal, $\Delta\pi_F \equiv 0$ whenever $F \subseteq
\{\,(x,y)\in[0,1]^2 \st x+ y \equiv 
f\pmod1\,\}$.  Therefore $\Delta\varphi_F(u,v) = 0$ is an equation in
\eqref{equation:system} for each $(u,v) \in \verts(F)$.  

Lastly, we show that $\pi^1$ satisfies the subadditivity test of Theorem~\ref{minimality-check}.  Let $F \in \Delta\P_\B$, $F\subseteq[0,1]^2$, and $(u,v) \in \verts(F)$.  If $\Delta\pi_F(u,v) = 0$, then $\Delta\varphi_F(u,v) = 0$, as implied by the system of equations.  Otherwise, if $\Delta\pi_F(u,v) > 0$, then   
\begin{align*}
  \Delta\pi_F^1(u,v) 
  &= \Delta\pi_F(u,v) + \frac{\epsilon}{3
    ||\bar\varphi||_\infty} \bar\varphi(u) + \frac{\epsilon}{3
    ||\bar\varphi||_\infty} \bar\varphi(v) - \frac{\epsilon}{3
    ||\bar\varphi||_\infty} \bar\varphi(u+ v) \\
  &\geq \Delta\pi_F(u,v) -
  \tfrac{\epsilon}{3 } - \tfrac{\epsilon}{3} - \tfrac{\epsilon}{3 } \geq 0
\end{align*}

Therefore, by Theorem~\ref{minimality-check}, $\pi^1$ (and, by the same
argument, $\pi^2$) is a minimal valid function.  Therefore $\pi$ is not
extreme. 
\end{proof}

\subsection{Sufficient condition for extremality of an affine imposing function~$\pi$}
\label{section:system-AI}


Consider the following
definition.

\begin{definition}
  Let $\pi$ be a minimal valid function.  
  \begin{enumerate}[\rm(a)]
  \item For any closed proper interval $I \subset [0,1]$, if $\pi$ is affine in
    $\intr(I)$ and if for all valid functions $\pi^1, \pi^2$ such that $\pi =
    \tfrac{1}{2}(\pi^1 + \pi^2)$ we have that $\pi^1, \pi^2$ are
    affine in $\intr(I)$, then we say that $\pi$ is \emph{affine imposing in
      $I$}.
  \item For a collection $\I$ of closed proper intervals of $[0,1]$, if for all
    $I \in \I$, $\pi$ is affine imposing in $I$, then we say that $\pi$ is
    \emph{affine imposing in $\I$}.
  \end{enumerate}
\end{definition}

\begin{corollary}
\label{corollary:iff}
If $\pi$ is a minimal piecewise linear function with breakpoints in $\B$ and
is affine imposing in $\I_{B,\edge}$, then $\pi$ is extreme if and only if the
system of equations \eqref{equation:system} has a unique solution.   
\end{corollary}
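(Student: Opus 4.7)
The plan is to prove the two directions separately, with the forward direction being essentially immediate from the preceding theorem and the backward direction relying on the affine imposing hypothesis.

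For the ``only if'' direction, I would argue by contrapositive using Theorem~\ref{theorem:systemNotUnique} directly: if the system~\eqref{equation:system} does not have a unique solution, then $\pi$ is not extreme. Note that this direction does not use the affine imposing hypothesis at all.

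For the ``if'' direction, suppose the system has a unique solution and write $\pi = \tfrac12(\pi^1+\pi^2)$ where $\pi^1,\pi^2$ are valid functions; the goal is to show $\pi = \pi^1 = \pi^2$. The key sequence of observations is the following. First, by Lemma~\ref{lem:minimality-of-pi1-pi2}, the functions $\pi^1,\pi^2$ are both minimal; in particular, they satisfy $\pi^i(0)=0$, $\pi^i(f)=1$, and (by periodicity modulo~$\Z$) $\pi^i(1)=0$. Second, because $\pi$ is affine imposing in $\I_{\B,\edge}$, each $\pi^i$ is affine on $\intr(I)$ for every $I \in \I_{\B,\edge}$. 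Combined with the values of $\pi^i$ at the breakpoints in~$\B$, this means $\pi^1$ and $\pi^2$ are piecewise linear functions with breakpoints in~$\B$ in the sense of Section~2.1, so they are described by a concrete assignment of the variables $(m_I,b_I)$ and $b_I$ of the system.

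It remains to check that this assignment actually satisfies every equation of the system~\eqref{equation:system}. The equations $\varphi(0)=0$, $\varphi(f)=1$, $\varphi(1)=0$ hold by minimality, as already noted. For the additivity equations $\Delta\varphi_F(u,v)=0$ at vertices $(u,v) \in \verts(F)$ where $\Delta\pi_F(u,v) = 0$, I would invoke Corollary~\ref{cor:tight-implies-tight-with-pi1F} to conclude $\Delta\pi^i_F(u,v) = 0$ for $i=1,2$. Hence both $\pi^1$ and $\pi^2$ are solutions to~\eqref{equation:system}, and since $\pi$ is also a solution, uniqueness forces $\pi^1=\pi^2=\pi$, proving extremality.

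The only non-routine step is invoking the affine imposing hypothesis to move from ``$\pi^1,\pi^2$ are arbitrary minimal valid functions'' to ``$\pi^1,\pi^2$ are parametrized by the finitely many coefficients appearing in~\eqref{equation:system}''; after that, the reduction to uniqueness of the linear system is automatic. The real work of the paper is therefore not in this corollary but in identifying verifiable sufficient conditions guaranteeing that $\pi$ is affine imposing in $\I_{\B,\edge}$, which is taken up in the subsequent sections.
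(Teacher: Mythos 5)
Your proposal is correct and follows essentially the same route as the paper: the ``only if'' direction is the contrapositive of Theorem~\ref{theorem:systemNotUnique}, and the ``if'' direction uses the affine imposing hypothesis plus minimality (Lemma~\ref{lem:minimality-of-pi1-pi2}) to conclude that $\pi^1,\pi^2$ are piecewise linear with breakpoints in~$\B$ and, via the tightness transfer (the paper cites Lemma~\ref{lemma:tight-implies-tight}, you cite its Corollary~\ref{cor:tight-implies-tight-with-pi1F}, which is the same content), satisfy the system~\eqref{equation:system}, so uniqueness forces $\pi=\pi^1=\pi^2$. The only cosmetic difference is that the paper phrases the second direction as a contradiction with distinct $\pi^1,\pi^2$ while you argue directly; this is immaterial.
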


Later, in section~\ref{section:proofs-main}, we will determine when $\pi$ has
this property.

\begin{proof}
Suppose there exist distinct, valid functions $\pi^1, \pi^2$ such that $\pi =
\tfrac{1}{2}(\pi^1 + \pi^2)$. 

Since $\pi$ is affine
imposing in $\I_{\B,\edge}$, $\pi^1$ and $\pi^2$ 
must also be piecewise linear functions with breakpoints in the same set~$\B$.
Also, since $\pi$ is minimal, $\pi^1$ and $\pi^2$ are both minimal by
Lemma~\ref{lem:minimality-of-pi1-pi2}.

Furthermore, $\pi$ and, by
Lemma~\ref{lemma:tight-implies-tight}, also $\pi^1, \pi^2$ satisfy the system
of equations \eqref{equation:system}.  If this system has a unique solution,
then $\pi = \pi^1  = \pi^2$, which is a contradiction since $\pi^1, \pi^2$
were assumed distinct.  Therefore $\pi$ is extreme. 

On the other hand, if the system \eqref{equation:system} does not have a unique solution, then by Theorem~\ref{theorem:systemNotUnique}, $\pi$\ is not extreme.
\end{proof}

\subsection{Remarks on reducing the dimension of the system}

Writing down a reduced system is advantageous for reading a proof of a
function being extreme. In previous literature, this has been done in two main ways.  

\begin{remark}
  If $\pi$ is continuous, the variables $b_I$ for any $I \in
  \I_{\B,\point}$ become redundant and can be removed from the system.  Also,
  it follows that any solution~$\varphi$ to the system (and thus functions
  $\pi^1, \pi^2$) also must be continuous, so we can remove the variables
  $b_I$ for all $I \in \I_{\B,\edge}$ and replace these values as integrals
  over the function, which are linear in the slopes $c_I$ for $I \in
  \I_{\B,\edge}$.
\end{remark}
\begin{remark}
As we will show in section~\ref{section:proofs-main}, 
each of the functions $\pi$, $\pi^1$, $\pi^2$ 
actually has the same slopes on certain
intervals.
Therefore, the number of variables can be reduced.  This
observation was first applied to prove the two slope theorem \cite{tspace}.
\end{remark}

\section{The rational case: Proof of the main results}
\label{section:proofs-main}

In this section, let $\pi$ be a fixed minimal valid function, whose breakpoints are all rational.  Let
$q$ be the least common multiple of all denominators of breakpoints.
Then the additive group $\langle \B \rangle_\Z$ generated by the set~$B$ of
breakpoints is $\tfrac{1}{q} \Z$.
We will think of $\pi$ as a piecewise linear function with
breakpoints in $\tfrac{1}{q} \Z$.  Consequently we will consider the one-dimensional polyhedral
complex $\P_{\frac1q\Z}$ instead of~$\P_B$.  We will use the abbreviation
$\P_q = \P_{\frac1q\Z}$; so $\P_q = \{\emptyset\}\cup \Ipoint \cup \Iedge$, etc.  

Likewise we will consider the two-dimensional polyhedral
complex~$\Delta\P_{q} = \Delta\P_{\frac1q\Z}$ introduced in section \ref{section:minimalityTest}.
Observe that because of the even spacing of the set of breakpoints, 
every face of~$\Delta\P_q$ is a simplex (i.e., a point, edge,
or triangle).  See Figure
\ref{figure:polyhedralComplex}. 

Later we will also refine $\tfrac{1}{q} \Z$ to $\tfrac{1}{4q} \Z$ 
and use the
corresponding one-dimensional polyhedral complex $\P_{4q}$ and two-dimensional
polyhedral complex $\Delta\P_{4q}$.  

In the following, we show that either $\pi$ is affine imposing in $\Iedge$
(section~\ref{section:AI}) or we can construct a piecewise linear
$\Gamma$-equivariant perturbation with breakpoints in $\frac1{4q}\Z$ that proves
$\pi$ is not extreme (section~\ref{sec:non-extreme-by-perturbation}). 
If $\pi$ is affine imposing in $\Iedge$, we use Corollary~\ref{corollary:iff} 
to decide if~$\pi$ is extreme or not
(section~\ref{section:system-use}).   
In section~\ref{sec:proof-intro-thm}, we prove the main theorems stated in
the introduction.

\subsection{Imposing affine linearity on open intervals}
\label{section:AI}

In this subsection we find a set $\S^2_{q,\edge}$ of intervals in which the
function~$\pi$ is affine imposing.

\paragraph{Covered intervals.}
In the first step, we consider certain projections of the $2$-faces (triangles)
$F$ of the complex~$\Delta\P_q$ with $\Delta\pi=0$ on 
$\intr(F)$.  (These triangles are shaded bright green in
Figure~\ref{figure:polyhedralComplex}.)
We define the projections $p_1,p_2,p_3\colon \R\times \R \to \R$ as 
\begin{displaymath}
  p_1(x,y) = x,\quad p_2(x,y) = y, \quad\text{and}\quad p_3(x,y) = x+ y.
\end{displaymath}
\begin{figure}[t!]%
\centering%
\ifpdf
\input{figures/figurePatching1D.pdftex_t}
\else
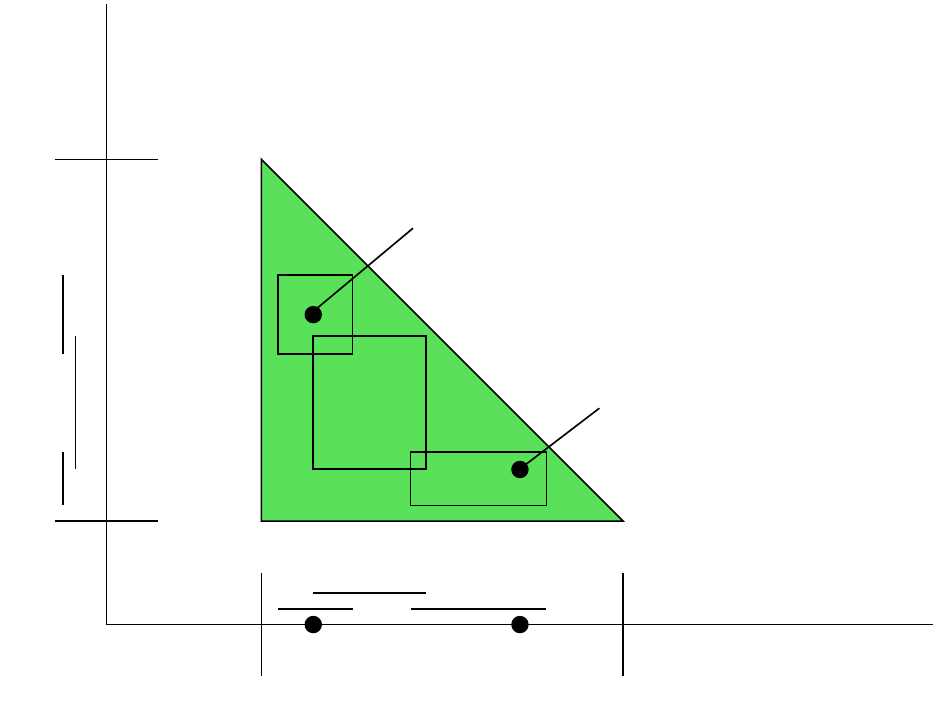
\fi%
\caption{The construction of a chain of patches in the proof of Lemma~\ref{lemma:patching-interval-lemma}}%
\label{figure:patching1D}%
\end{figure}%
Let $F$ be one of these faces.  We apply the Interval Lemma
(Lemma~\ref{lem:interval_lemma})  
to intervals $U$, $V$, $U+V$ such that the two-dimensional ``patch'' $U\times
V$ lies entirely in the face $F$.  By covering the
interior of $F$ with such patches, we show below that $\pi$ is affine
imposing in the projections $p_1(F)$, $p_2(F)$, and $p_3(F)$.  This is a standard technique in the literature, which we make explicit here
as a lemma.
We note that
$p_1(F)$, $p_2(F)$, and $p_3(F)$ are $1$-faces (intervals) of~$\P_q$, and
$\intr(p_i(F)) = p_i(\intr(F))$ for $i=1,2,3$. 
\begin{lemma}
  \label{lemma:patching-interval-lemma}
  Let $F\in\Delta\P_q$ be a two-dimensional face with $\Delta\pi=0$ on $\intr(F)$. 
  \begin{enumerate}[\rm(i)]
  \item Then $\pi$ is affine imposing in the intervals~$p_1(F)$, $p_2(F)$, and $p_3(F)$.
  \item More specifically, let $\pi^1, \pi^2$ be valid functions such that
    $\pi = \tfrac{1}{2} (\pi^1 + \pi^2)$.  Let $\theta=\pi$, $\pi^1,$
    or $\pi^2$. 
    Then the function~$\theta$ is affine with the same slope in $\intr(p_1(F))$,
    $\intr(p_2(F))$, and $\intr(p_3(F))$.     
  \end{enumerate}
\end{lemma}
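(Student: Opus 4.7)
The plan is to combine a local application of the Interval Lemma to small ``patch'' rectangles $U\times V\subseteq\intr(F)$ with a chaining argument that propagates slopes across an overlapping covering of $\intr(F)$. The key structural fact I would use up front is that, because the breakpoints are evenly spaced in $\tfrac1q\Z$, every two-dimensional face $F\in\Delta\P_q$ is a triangle.

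First I would transfer the additivity hypothesis to $\pi^1$ and $\pi^2$. Since $\Delta\pi_F$ is affine and $\Delta\pi=\Delta\pi_F$ on $\intr(F)$, the hypothesis $\Delta\pi\equiv0$ on $\intr(F)$ is equivalent to $\Delta\pi_F\equiv 0$ on all of $F$, so in particular $\Delta\pi_F$ vanishes at the three non-collinear vertices of~$F$. Applying Corollary~\ref{cor:tight-implies-tight-with-pi1F} at each of these vertices and invoking affineness of $\Delta\pi^i_F$ then forces $\Delta\pi^i_F\equiv0$ on $F$. Consequently $\pi^i(u)+\pi^i(v)=\pi^i(u+v)$ for every $(u,v)\in\intr(F)$ and $i=1,2$, and trivially for $\theta=\pi$.

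Next I would apply the Interval Lemma on patches. For any $(u_0,v_0)\in\intr(F)$ pick a closed rectangle $U\times V\subseteq\intr(F)$ containing $(u_0,v_0)$ in its interior. Because $\pi^i$ is minimal it satisfies $0\le\pi^i\le 1$, hence is bounded on every bounded interval, so for each $\theta\in\{\pi,\pi^1,\pi^2\}$ the Interval Lemma furnishes a constant $c=c_\theta(U,V)$ such that $\theta$ is affine with slope $c$ on each of the three intervals $U$, $V$, and $U+V$. This already yields the conclusion of (ii) locally, in a neighborhood of $(u_0,v_0)$ projected onto each of $p_1(F)$, $p_2(F)$, $p_3(F)$.

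The final step is to chain these local slopes into a single global slope $c_\theta=c_\theta(F)$ on each projection. Since $\intr(F)$ is open and convex, hence path-connected, any two patches inside $\intr(F)$ can be joined by a finite chain of patches whose consecutive members overlap in a nondegenerate rectangle; on such an overlap the two local affine formulas for $\theta$ must coincide, so the slopes match, as illustrated in Figure~\ref{figure:patching1D}. Because $F$ is a triangle, for every $u_0\in\intr(p_1(F))$ there exists $v_0$ with $(u_0,v_0)\in\intr(F)$, and likewise for $p_2(F)$ and $p_3(F)$. Combining these two observations shows that $\theta$ is affine with a single slope $c_\theta$ on each of $\intr(p_1(F))$, $\intr(p_2(F))$, $\intr(p_3(F))$, proving~(ii); part~(i) is then immediate, since $\pi$ itself is already affine on each 1-face of~$\P_q$. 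The main obstacle is keeping the chaining bookkeeping honest: one must verify that overlapping patches can always be chosen inside $\intr(F)$ and that overlaps force equality of slopes not only on the $U$-side but also on the $V$- and $(U+V)$-sides, so that all three projections inherit the same global constant.
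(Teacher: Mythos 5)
Your overall strategy is the same as the paper's: apply the Interval Lemma on small rectangles $U\times V\subseteq\intr(F)$, then chain overlapping patches along a path in the convex open set $\intr(F)$ to get one global slope on each projection (this is exactly the construction in Figure~\ref{figure:patching1D}), and finally use that the same constant $c$ serves $U$, $V$, and $U+V$ within each patch to tie the three projections together. Your verification of the boundedness hypothesis of the Interval Lemma via $0\le\pi^i\le1$ and the observation $\intr(p_i(F))=p_i(\intr(F))$ are both fine, and the chaining bookkeeping you describe is sound.

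However, your first step is flawed as written. You invoke Corollary~\ref{cor:tight-implies-tight-with-pi1F} and ``affineness of $\Delta\pi^i_F$'' to conclude $\Delta\pi^i_F\equiv0$ on $F$; but the functions $\Delta\pi^i_F$ are only defined (and only affine) under the standing assumption of section~\ref{section:def} that $\pi^1,\pi^2$ are themselves piecewise linear with breakpoints in~$\B$. In Lemma~\ref{lemma:patching-interval-lemma} the functions $\pi^1,\pi^2$ are arbitrary valid functions --- establishing that they are affine on these intervals is precisely the content of ``affine imposing'' --- so this step is circular: neither $\pi^i_I$ nor the affineness of $\Delta\pi^i_F$ is available. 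The repair is immediate and is what the paper does: since $\Delta\pi(u,v)=0$ pointwise for $(u,v)\in\intr(F)$, Lemma~\ref{lem:tightness} gives $\pi^i(u)+\pi^i(v)=\pi^i(u+v)$ at every such point directly, which is all your patching argument needs (your patches lie in $\intr(F)$, so additivity on the boundary of $F$ is never used). With that substitution, your proof matches the paper's; the appeal to every $2$-face being a triangle is true but not needed beyond the fact $\intr(p_i(F))=p_i(\intr(F))$, which holds for any polytope.
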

We will say that the intervals $p_1(F)$, $p_2(F)$, and $p_3(F)$ in
Lemma~\ref{lemma:patching-interval-lemma} are \emph{covered}.
\begin{proof}
Let $\pi^1,\pi^2$ be valid functions such that $\pi = \frac12 (\pi^1+
\pi^2)$.
We will first show that $\pi^1$ is affine on the interior of the interval $I = p_1(F)$.  

Fix any $x_0 \in \intr(I)$. We will show that there exists $c \in \R$ such that $\pi^1(x_*) = \pi^1(x_0) + c\cdot (x_*-x_0)$ for all $x_* \in \intr(I)$. This will prove the claim.

Let $x_* \in \intr(I)$.  Since $x_0,x_* \in \intr(I)$, there exist points $(x_0,
y_0), (x_*, y_*) \in \relint(F)$ such that $x_0 = p_1 (x_0, y_0)$
and $x_* = p_1 (x_*, y_*)$. 
 Therefore, we can construct a sequence of closed intervals $U_0, \ldots, U_n
 \subseteq \intr(I)$ and another sequence of closed intervals $V_0, \ldots, V_n$ such that $U_i \times V_i \subseteq \relint(F)$ and $(x_0, y_0) \in U_0\times V_0$ and $(x_*, y_*) \in U_n \times V_n$ such that $\intr(U_{i-1} \times V_{i-1}) \cap \intr(U_{i} \times V_{i}) \neq \emptyset$, for all $i = 1, \ldots, n$. 
(See Figure~\ref{figure:patching1D}.)
Therefore, we can find a sequence of points $(x_1, y_1), \ldots, (x_n, y_n)$
such that $(x_i, y_i) \in\intr(U_{i-1} \times V_{i-1}) \cap \intr(U_{i} \times
V_{i})$. 

Now, since $\Delta\pi(x,y) = 0$ over $\relint(F)$, we have that
$\Delta\pi(u,v) = 0$ for all $(u,v) \in U_i\times V_i$, $i = 0, \ldots,
n$. This implies $\pi(u) + \pi(v) = \pi(u+ v)$ for all $(u,v) \in
U_i\times V_i$, and so the same relation holds for $\pi^1$ by
Lemma~\ref{lem:tightness}. Using the Interval Lemma (Lemma~\ref{lem:interval_lemma}), there exists $c_i$ such
that $\pi^1(u) = \pi^1(x_i) + c_i\cdot(u - x_i)$, for all $u \in U_i$ and this
holds for every $i = 0, \ldots, n$. Observe that $x_i$ belongs to the interior
of both $U_{i-1}$ and $U_i$ for all $i= 1, \ldots, n$. Therefore, we must have
$c_{i-1} = c_i$ for all $i = 1, \ldots, n$; we take $c$ to be this common
value. Now using the relation $\pi^1(x_*) = \pi^1(x_0) + (\sum_{i=1}^n
\pi^1(x_i) - \pi^1(x_{i-1})) + (\pi^1(x_*) - \pi^1(x_n))$, we find that
$\pi^1(x_*) = \pi^1(x_0) + c\cdot(x_* - x_0)$.\smallbreak 

Because of the symmetry of~$\Delta\pi$ in its arguments $x$ and $y$, 
the case $I = p_2(F)$ does not need a separate proof. 

Applying the Interval Lemma (Lemma~\ref{lem:interval_lemma}) with $U_0$ and
$V_0$, we obtain that $\pi^1$ has the same slope on $\intr(p_1(F))$ and
$\intr(p_2(F))$.  Since $\pi^1(x) + \pi^1(y) = \pi^1(x+ y)$ holds for
$(x,y)\in F$, it follows that $\pi^1$ is affine in $p_3(F)$ with the same slope. 

The same argument can be made for~$\pi^2$.   Thus the claim also holds for $\pi =
\frac12(\pi^1+\pi^2)$. 
\end{proof}

We define the set of covered intervals,
$$
  \I^2_{q,\edge} = \biggl\{\, I\in \Iedge \mathrel{\bigg|}
  \begin{array}{@{}l@{}}
    I = p_1(F) \text{ or } I = p_2(F) \text{ or } I = p_3(F) \\
    \text{for some $2$-face $F \in \Delta\P_q$ with $\Delta\pi = 0$ on $\intr(F)$}
  \end{array}
  \,\biggr\}.
$$
The superscript 2 indicates that this set comes from projections of
two-dimensional faces $F$.  Then we have the following corollary.
\begin{corollary}
  \label{lemma:affine-imposing-by-patching}
    $\pi$ is affine imposing in $\I^2_{q,\edge}$.
\end{corollary}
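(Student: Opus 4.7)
The plan is to derive this corollary as a direct unpacking of the definitions, applying Lemma~\ref{lemma:patching-interval-lemma} termwise to the intervals in~$\I^2_{q,\edge}$. First I would recall that, by our definition, $\pi$ is said to be affine imposing in the collection~$\I^2_{q,\edge}$ precisely when it is affine imposing in each individual interval $I \in \I^2_{q,\edge}$. So it suffices to verify this property one interval at a time.

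Next I would fix an arbitrary $I \in \I^2_{q,\edge}$. By the very definition of $\I^2_{q,\edge}$, there exists a two-dimensional face $F \in \Delta\P_q$ with $\Delta\pi = 0$ on $\intr(F)$ and an index $j \in \{1,2,3\}$ such that $I = p_j(F)$. Lemma~\ref{lemma:patching-interval-lemma}(i) applied to this $F$ gives that $\pi$ is affine imposing in each of $p_1(F)$, $p_2(F)$, and $p_3(F)$. In particular, $\pi$ is affine imposing in $I = p_j(F)$. Since $I$ was arbitrary, the conclusion follows.

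There is no real obstacle here: once Lemma~\ref{lemma:patching-interval-lemma} has done the substantive work of combining the Interval Lemma with the patching chain inside $\relint(F)$, the corollary is purely bookkeeping. The only small point to be aware of is that an interval $I$ may arise as $p_j(F)$ for several different faces $F$ (and several values of $j$), but this is harmless: it is enough that at least one such $F$ exists to invoke Lemma~\ref{lemma:patching-interval-lemma}.
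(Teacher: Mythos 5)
Your argument is correct and matches the paper, which states this as an immediate corollary of Lemma~\ref{lemma:patching-interval-lemma} together with the definition of~$\I^2_{q,\edge}$: every $I \in \I^2_{q,\edge}$ is by definition a projection $p_j(F)$ of some $2$-face $F$ with $\Delta\pi = 0$ on $\intr(F)$, so part~(i) of the lemma applies interval by interval. The multiplicity remark is fine and indeed harmless.
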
%

In the example in Figure~\ref{figure:polyhedralComplex}, all intervals are
covered, so $\I^2_{q,\edge} = \Iedge$.  

\paragraph{A graph of intervals.}
Next we will define a finite graph~$\G$ whose nodes correspond to the
intervals~$I$ in 
$\Iedge$.  If the function~$\pi$ is affine-imposing in a interval~$I$, then
this property will propagate along the edges of the graph to the entire
connected component of~$I$.

\begin{figure}[t!]
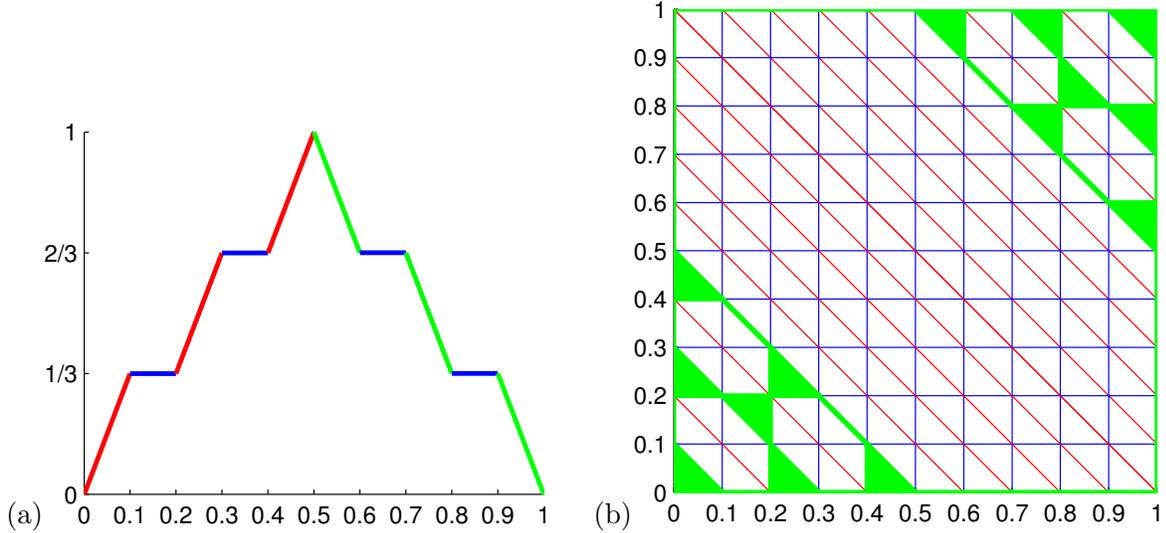
%
\centering
(a)\scalebox{.9}{\inputfigeps{flatFunction}}\hfill%
(b)\scalebox{.9}{\inputfigeps{polyhedralComplex2}}
\caption{A minimal continuous piecewise linear function~$\pi$ and its
  polyhedral complex~$\Delta\P_q$.%
  \quad(a) The function $\pi$ with $f = 0.5$ and breakpoints in
  $\B=\frac1q\Z$ with $q=10$.    
  The function is not extreme, but is affine imposing in $\Iedge$ with three
  slopes: a positive slope, a negative slope, and zero slope.  The intervals
  of positive slope are all connected in $\G$, as well as the intervals of
  negative slope.  But not all the intervals of zero slope are connected in
  $\G$.  The zero slope intervals on the left side of~$f$ and the zero slope
  intervals on the right side of~$f$ form two separate connected components of the
  graph.%
  \quad
  (b)  Its polyhedral complex $\Delta\P_q$.  
  The polyhedral complex is shaded bright green wherever $\Delta\pi(x,y) = 0$.  
  }%
  \label{figure:polyhedralComplex}%
  \label{figure:flatFunction}%
\end{figure}%

To make this graph finite, we will use the periodicity of the function~$\pi$
and of the complex~$\P_q$ modulo~$\Z$.  By $\P_{q,\edge}/\Z$ we denote the set of
equivalence classes 
\begin{displaymath}
  \EqClass{I} = \{\, \tau_s(I) = I + s \st s\in\Z\,\}
\end{displaymath}
of proper intervals $I
\in\P_{q,\edge}$ modulo translations by integers~$s\in\Z$.  We can identify an
equivalence class with its unique representative that is a subinterval of~$[0,1]$.
Note that if $\pi$ is affine imposing on an interval~$I\in\P_q$, then it is
affine imposing on all intervals $J \in \EqClass{I}$. 

\begin{definition}\label{def:graph}
  Let $\G = \G(\Iedge/\Z,\E)$ be the undirected graph with node set $\Iedge/\Z$
  and edge set $\E$ where $\{\EqClass{I}, \EqClass{J}\} \in \E$ if and only if
  $[I]\neq[J]$ and for some $1$-face $E \in \Delta\P_q$ and some face
  $F\in\Delta\P_q$ with $E\subseteq F$ and $\Delta\pi_F(x,y)=0$ for
  $(x,y)\in\relint(E)$ and some $v \in \tfrac{1}{q} \Z$ we have:
  \begin{itemize}
  \item (Case 1:) $I = p_1(E)$, $J = p_2(E)$, $\{v\} = p_3(E)$ (and thus $J = \rho_v(I)$
    and $I = \rho_v(J)$), or
  \item (Case 2:) $I = p_1(E)$, $\{v\} = p_2(E)$, $J = p_3(E)$ (and thus $J =
    \tau_v(I)$), or
  \item (Case 3:) $J = p_1(E)$, $\{v\} = p_2(E)$, $I = p_3(E)$ (and thus $I =
    \tau_v(J)$).
  \end{itemize}
\end{definition}

\begin{remark}
  This is well-defined because of the periodicity of~$\pi$
  modulo~$\Z$.  Indeed, let $\EqClass{I} = \EqClass{I'}$ and $\EqClass{J} =
  \EqClass{J'}$.  Let $I'=I+s$ and $J'=J+t$ with $s,t\in\Z$. 
  Suppose $I = p_1(E)$, $J = p_2(E)$, $\{v\} = p_3(E)$ for some $E,F\in\Delta\P_q$
  where $E$ is a $1$-face 
  with $E\subseteq F$ and $\Delta\pi_F=0$ on
  $\relint(E)$ and some $v\in\frac1q\Z$.
  Define $E' = E + (s,t)$ and $F' = F + (s,t)$.
  Then $E'$ is another $1$-face in $\Delta\P_q$, $F'$ is a face in
  $\Delta\P_q$ with $E' \subseteq
  F'$ and $\Delta\pi_{F'}=0$ on
  $\relint(E')$ and $I' = I + s = p_1(E')$, $J' = J+t = p_2(E')$, and $\{{v'}\} =
  p_3(E')$, where ${v'} = v + s + t \in\frac1q\Z$.
  
  On the other hand, suppose, without loss of generality, 
  $I = p_1(E)$, $\{v\} = p_2(E)$, $J = p_3(E)$ for some 
  $1$-face $E\in\Delta\P_q$ contained in a face $F\in\Delta\P_q$ 
  with $\Delta\pi_F=0$ on $\relint(E)$ and some $v\in\frac1q\Z$.
  Define $E' = E + (s, t - s)$ and $F' = F + (s, t-s)$.
  Then $E'$ is another $1$-face in $\Delta\P_q$, which is contained in the
  face $F'\in\Delta\P_q$, and 
  $\Delta\pi_{F'}=0$ on $\relint(E')$ with
  $I' = I + s = p_1(E')$, $\{{v'}\} = p_2(E')$, $J' = J + t = p_3(E')$, 
  where ${v'} = v - s + t$.
\end{remark}

\begin{lemma}
\label{lemma:pointLemma}
Let $\pi^1,\pi^2$ be valid functions such that $\pi = \frac12(\pi^1+\pi^2)$.
For $\theta = \pi, \pi^1$, or $\pi^2$, if $\theta$ is affine in $\intr(I)$,
and $\{\EqClass{I},\EqClass{J}\} \in \E$, then $\theta$ is affine in $\intr(J)$ as well with the
same slope.   
\end{lemma}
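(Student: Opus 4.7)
I would treat the three cases of Definition~\ref{def:graph} separately and, in each, show that affine behavior of $\theta$ on $\intr(I)$ transfers to $\intr(J)$ with the same slope. The hypothesis provides a $1$-face $E\subseteq F$ inside a $2$-face $F$ of $\Delta\P_q$ together with a point $v\in\tfrac1q\Z$ yielding one of the three geometric relations between $I$ and $J$ (one reflection and two translations). A preliminary remark is that $\Delta\pi_F$ is affine on $F$ and vanishes on $\relint(E)$, hence it vanishes on all of $E$; by Lemma~\ref{lemma:tight-implies-tight} this yields $\Delta\theta(x,y)\to 0$ as $(x,y)\to(u_0,w_0)$ from $\relint(F)$, for every $(u_0,w_0)\in E$ and every $\theta\in\{\pi,\pi^1,\pi^2\}$.

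For $\theta=\pi$ the claim is essentially immediate from piecewise linearity; the substantive case is $\theta=\pi^i$, where $\pi^i$ is merely a valid function and may be discontinuous. My approach is to choose in each case a one-parameter path in $\relint(F)$ approaching a generic point $(u_0,w_0)\in\relint(E)$ so that, of the three terms $\theta(x),\theta(y),\theta(x+y)$ appearing in $\Delta\theta(x,y)$, exactly one is \emph{held constant} at a prescribed value in $\intr(J)$ (the unknown value to be pinned down), a second is forced to its correct limit by the assumed affinity of $\theta$ on $\intr(I)$, and the third reduces to a one-sided limit of $\theta$ at $v$ from the side of $\relint(F)$.

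Concretely, in Case~1 (diagonal $E$, $J=\rho_v(I)$) I would fix $w_0\in\intr(J)$ and take the horizontal path $(u_0+t,w_0)$ with $u_0=v-w_0$; in Case~2 ($J=\tau_v(I)$) I would fix $y_0=u_0+v\in\intr(J)$ and take the antidiagonal path $(u_0-s,v+s)$, which keeps $x+y=y_0$ constant; in Case~3 ($I=\tau_v(J)$) I would fix $u_0\in\intr(J)$ and take the vertical path $(u_0,v+t)$, observing that $u_0+v+t\in\intr(I)$ then lies in the region of known affinity. The sign of the parameter is chosen according to which of the two triangles of $\Delta\P_q$ adjacent to $E$ plays the role of $F$; this is a routine geometric verification in the uniform $\tfrac1q$-grid.

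Passing to the limit and invoking $\Delta\theta\to 0$, each construction gives a relation expressing the prescribed value of $\theta$ on $\intr(J)$ as a one-sided limit $L$ of $\theta$ at $v$ minus an affine function of $u_0$. The crucial observation is that $L$ depends only on $v$ and on the side of approach, not on $u_0$; hence as $u_0$ varies, the relation forces $\theta$ to be affine on $\intr(J)$ with the same slope $m$. The main obstacle I anticipate is the potential discontinuity of $\pi^i$ at the breakpoint $v$, which rules out using two-sided limits; the construction is designed precisely to sidestep this by needing only the one-sided limit at $v$, whose existence is forced by Lemma~\ref{lemma:tight-implies-tight} together with the affinity of $\theta$ on $\intr(I)$.
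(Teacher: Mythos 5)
Your limit construction treats exactly the situation the paper calls $E \subsetneq F$, i.e.\ $F$ a two-dimensional face, and there it is correct and is essentially the paper's own argument: your horizontal path in Case~1 is precisely the paper's limit along the points $(z-w_0,w_0)\in\relint(F)$ with $z\to v$, and your antidiagonal and vertical paths supply for Cases~2 and~3 the analogues that the paper omits ``for brevity''. Two small remarks. First, the assertion that the one-sided limit $L$ at $v$ does not depend on $u_0$ is the crux and does need its (one-line) justification: for two base points the corresponding sets of approach are open intervals having $v$ as an endpoint on the same side, so their intersection is again such an interval and the limits along all three sets coincide; this is exactly how the paper argues. Second, for $\theta=\pi$ the ``same slope'' conclusion is not a consequence of piecewise linearity alone but of the vanishing of the affine function $\Delta\pi_F$ on the one-dimensional set $E$ (which forces the relevant slopes to agree); alternatively, your path argument covers $\pi$ as well.

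The genuine gap is that Definition~\ref{def:graph} does not require $F$ to be two-dimensional: it asks only for some face $F\in\Delta\P_q$ with $E\subseteq F$ and $\Delta\pi_F=0$ on $\relint(E)$, and $F=E$ is allowed. This case is not subsumed by the two-dimensional one, because for a discontinuous $\pi$ the limiting values of $\Delta\pi$ from an adjacent triangle can differ from its values on the edge, so additivity may hold on $\relint(E)$ without $\Delta\pi_{F'}$ vanishing there for either adjacent $2$-face~$F'$. When $E=F$, your paths leave $\relint(F)$, Lemma~\ref{lemma:tight-implies-tight} gives no information along them, and the limit argument does not get started. The repair is immediate and is what the paper does in this subcase: for $(x,y)\in\relint(E)$ one has $\Delta\pi(x,y)=\Delta\pi_F(x,y)=0$ pointwise, so Lemma~\ref{lem:tightness} transfers the additivity to $\theta$ pointwise, and the resulting functional equation (e.g.\ $\theta(v-y)+\theta(y)=\theta(v)$ for $y\in\intr(J)$ in Case~1) directly gives affineness of $\theta$ on $\intr(J)$ with slope~$c$. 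With that case added, your proof is complete.
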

\begin{proof}
Since $\theta$ is affine in $\intr(I)$, let $c,b \in \R$ such that $\theta(x)
= cx + b$ for $x\in\intr(I)$. If $\{\EqClass{I},\EqClass{J}\} \in \E$, then
one of three cases could happen.

\noindent \textbf{Case 1.} 
$I = p_1(E)$, $J = \rho_a(I) = p_2(E)$, $\{a\} = p_3(E)$ for some $1$-face $E
\in\Delta\P_q$ contained in a face $F\in\Delta\P_q$ with $\Delta\pi_F=0$ on
$\relint(E)$ and some $a\in\frac1q\Z$. 

If $E = F$, then  for all $v\in \intr(J)$, $(a-v, v)\in\relint(E)=\relint(F)$ and thus
$0 = \Delta\pi_F(a-v, v) = \Delta\pi(a-v,v) = \pi(a - v) + \pi(v) - 
\pi(a) = 0$. Then, by Lemma~\ref{lem:tightness}, 
$\Delta\theta(a-v,v) = \theta(a - v) + \theta(v) - \theta(a) = 0$ for all $v\in \intr(J)$.  Since
$a - v = \rho_a(v) \in \intr(I)$ for all $v \in \intr(J)$, 
$\theta(a - v) = c(a - v) + b$ for
all $v \in \intr(J)$.  Then $\theta(v) = -\theta(a-v) + \theta(a) = c v - c a
-b + \theta(a)$ for $v\in
\intr(J)$, and therefore 
$\theta$ is affine in $\intr(J)$ with the same slope.

On the other hand, if $E \subsetneq F$, then $F$ is a $2$-face (triangle)
whose diagonal edge is~$E$. 
Let $\{a\} \subsetneq K = p_3(F)$. 
For all $v\in \intr(J)$, $(a-v,
v)\in F$ and thus by Lemma~\ref{lemma:tight-implies-tight},
\begin{equation}
  \label{eq:limit-xy-case-1}
  \lim_{\substack{(x,y) \to (a-v,v)\\ (x,y) \in
      \relint(F)}}\Delta\theta(x,y) 
  = \lim_{\substack{(x,y) \to (a-v,v)\\ (x,y) \in
      \relint(F)}} \Bigl(\theta(x) + \theta(y) - \theta(x+y)\Bigr) 
  = 0.
\end{equation}
Let $v\in\intr(J)$. Because $v \in \intr(J)$, the set 
$$ Z_v = \{\, z \st (z-v,v) \in \relint(F) \,\} 
= \intr(K) \cap (\intr(I) + v) $$
is a open subinterval of~$K$ with $a$ on its boundary. 
Thus we can specialize the limit~\eqref{eq:limit-xy-case-1} to the points
$(x,y)=(z-v,v)$ with $z\in Z_v$, 
\begin{equation}
  \label{eq:limit-z-case-1}
  \lim_{\substack{z \to a\\ z \in Z_v}} 
    \Bigl(\theta(z - v) + \theta(v) - \theta(z)\Bigr) 
  = 0.
\end{equation}
But since $a - v \in\intr(I)$, $\theta$ is continuous at $a - v$, and so 
$$\lim_{\substack{z \to a\\ z \in Z_v}} \theta(z - v) = \theta(a-v).$$
Since also $\theta(v)$ does not depend on~$z$, we find that 
the limit of~$\theta(z)$ exists and equals
\begin{equation}
  \label{eq:limit-case-1}
  \lim_{\substack{z \to a\\ z \in Z_v}} \theta(z) = \theta(a-v) + \theta(v).
\end{equation}
We claim that the limit~\eqref{eq:limit-case-1} is independent of the choice
of~$v \in\intr(J)$.  Indeed, let $v' \in\intr(J)$, then 
$Z_v$, $Z_{v'}$, and $ Z_v \cap Z_{v'} $ are open (nonempty) subintervals of~$K$
with $a$ on their boundaries, and thus
$$ \lim_{\substack{z \to a\\ z \in Z_v}} \theta(z)
= \lim_{\substack{z \to a\\ z \in Z_v \cap Z_{v'}}} \theta(z)
= \lim_{\substack{z \to a\\ z \in Z_{v'}}} \theta(z). $$
Let $L$ denote this limit. Then \eqref{eq:limit-case-1} implies $\theta(v) = -\theta(a-v) + L = c v - c a
- b + L$ for $v \in \intr(J)$, and therefore $\theta$ is affine in $\intr(J)$
with the same slope.\smallskip

\noindent \textbf{Case 2.} 
$I = p_1(E)$, $\{a\} = p_2(E)$, $J = \tau_a(I) = p_3(E)$ for some $1$-face $E
\in\Delta\P_q$ contained in a face $F\in\Delta\P_q$ with $\Delta\pi_F=0$ on
$\relint(E)$ and some $a\in\frac1q\Z$.  For brevity, we only prove the case
where $E = F$. 
Then 
$\pi(w -  a) + \pi(a) = \pi(w)$ 
and thus, by Lemma~\ref{lem:tightness}, 
$\theta(w -  a) + \theta(a) = \theta(w)$ for all $w \in \intr(J)$. Again,
since $w - a = \tau_a^{-1}(w) \in \intr(I)$ for all $w \in \intr(J)$, we have $\theta(w - a) = c(w - a)
+ b$ for all $w \in \intr(J)$. Then we obtain $\theta(w) = c w - c a + b + 
\theta(a)$ for $w\in\intr(J)$, and therefore $\theta$ is affine in $\intr(J)$
with the same slope.  

\noindent \textbf{Case 3.} 
$J = p_1(E)$, $\{a\} = p_2(E)$, $I = \tau_a(J) = p_3(E)$ for some $1$-face $E
\in\Delta\P_q$ contained in a face $F\in\Delta\P_q$ with $\Delta\pi_F=0$ on
$\relint(E)$ and some $a\in\frac1q\Z$.  For brevity, we only prove the case
where $E = F$. 
Then $\pi(u) + \pi(a) = \pi(u+a)$ and thus, by Lemma~\ref{lem:tightness}, 
$\theta(u) + \theta(a) = \theta(u+a)$ for all $u \in\intr(J)$.  Since $u + a =
\tau_a(u) \in \intr(I)$ for all $u \in \intr(J)$, we have $\theta(u + a) = c(u
+ a) + b$ for all $u\in \intr(J)$.  Then we obtain $\theta(u) = \theta(u + a)
- \theta(a) = c u + c a + b - \theta(a)$ for $u\in\intr(J)$, and therefore $\theta$ is affine in
$\intr(J)$ with the same slope.
\end{proof}

\paragraph{Intervals connected to covered intervals.}
For each $I \in \Iedge$, let $\G_I$ be the connected component of $\G$
containing $\EqClass{I}$.   
We now define the set of all intervals connected to covered intervals in
the graph~$\G$, 
$$
\S^2_{q,\edge} =\bigl\{\,J \in \Iedge \bigst \EqClass{J} \in \G_I \text{ for some } I \in \I^2_{q,\edge}\,\bigr\}.
$$
This is a set of intervals on which $\pi$ is affine imposing.
(Actually, we will see below that it is exactly
the set of intervals on which $\pi$ is affine imposing; this will follow from
Lemma~\ref{lemma:not-extreme}.) 
\begin{theorem}
  \label{theorem:AI}
  \begin{enumerate}[\rm(i)]
  \item $\pi$ is affine imposing on $\S^2_{q,\edge}$.
  \item Moreover, let
    $\pi^1,\pi^2$ be valid functions such that $\pi =
    \frac12(\pi^1+\pi^2)$. Then, for $\theta = \pi, \pi^1, \pi^2$ and for each
    interval $I \in \S^2_{q,\edge}$, the function $\theta$ is affine in $\intr(J)$ with the
    same slope for every interval $J \in \G_I$.
  \end{enumerate}
\end{theorem}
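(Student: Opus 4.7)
The plan is to propagate the affine imposing property from the set~$\I^2_{q,\edge}$ of covered intervals, where it has been established in Corollary~\ref{lemma:affine-imposing-by-patching}, outward along edges of the graph~$\G$, using Lemma~\ref{lemma:pointLemma} as the key propagation step. Since the graph is designed precisely so that its edges correspond to additivity relations transferable via Case~1--3 of Lemma~\ref{lemma:pointLemma}, both (i) and (ii) should follow simultaneously from a single induction along a path in~$\G$.

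Concretely, I would fix valid functions $\pi^1,\pi^2$ with $\pi = \tfrac12(\pi^1+\pi^2)$, and let $\theta\in\{\pi,\pi^1,\pi^2\}$. Given $J\in\S^2_{q,\edge}$, by the definition of $\S^2_{q,\edge}$ there exists $I\in\I^2_{q,\edge}$ with $\EqClass{J}\in\G_I$. Since $I = p_k(F)$ for some two-dimensional face $F\in\Delta\P_q$ with $\Delta\pi=0$ on~$\intr(F)$ and some $k\in\{1,2,3\}$, Lemma~\ref{lemma:patching-interval-lemma}\,(ii) furnishes a common slope $c\in\R$ such that each of $\pi,\pi^1,\pi^2$ is affine with slope~$c$ on~$\intr(I)$.

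Next, I would pick a path $\EqClass{I}=\EqClass{I_0},\EqClass{I_1},\dots,\EqClass{I_n}=\EqClass{J}$ in~$\G$ and argue by induction on~$k$ that $\theta$ is affine on $\intr(I_k)$ with slope~$c$. The base case is handled by the previous paragraph; the inductive step is a direct application of Lemma~\ref{lemma:pointLemma} to the edge $\{\EqClass{I_{k-1}},\EqClass{I_k}\}\in\E$, which preserves both affineness and slope. This transfer is consistent at the level of equivalence classes because $\pi^1,\pi^2$ are minimal by Lemma~\ref{lem:minimality-of-pi1-pi2} and hence periodic modulo~$\Z$ by Theorem~\ref{thm:minimal}; thus affineness and slope on the interior of any representative of an equivalence class determine the same on every representative, so switching representatives along the path introduces no ambiguity.

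Setting $k=n$ yields that each of $\pi,\pi^1,\pi^2$ is affine on $\intr(J)$ with the common slope~$c$. Since $J\in\S^2_{q,\edge}$ was arbitrary, this simultaneously establishes~(i), that $\pi$ is affine imposing on every such~$J$, and~(ii), that the slope is the same on the interior of every interval in the connected component~$\G_I$, independently for each of $\pi,\pi^1,\pi^2$. I do not anticipate a substantive obstacle: the two key ingredients (patching via the Interval Lemma, and transfer along graph edges) are already packaged as Lemmas~\ref{lemma:patching-interval-lemma} and~\ref{lemma:pointLemma}, so the argument reduces to a short induction along a path followed by cosmetic unwinding of the definitions of $\S^2_{q,\edge}$ and $\G_I$.
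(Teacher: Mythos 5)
Your approach is essentially the paper's own proof: establish affineness (for each of $\pi,\pi^1,\pi^2$) on a covered interval via Lemma~\ref{lemma:patching-interval-lemma}, then propagate it, with its slope, along a path in $\G$ by induction, using Lemma~\ref{lemma:pointLemma} at each edge; the remark about periodicity handling the choice of representatives within an equivalence class is a point the paper addresses separately (in the remark following Definition~\ref{def:graph} and the observation that affine imposing transfers within $\EqClass{I}$), and your treatment of it is fine. One correction, though: Lemma~\ref{lemma:patching-interval-lemma}\,(ii) does \emph{not} furnish a single slope $c$ shared by $\pi$, $\pi^1$, and $\pi^2$ on $\intr(I)$; it says that, for each \emph{fixed} $\theta\in\{\pi,\pi^1,\pi^2\}$, $\theta$ has one slope common to $\intr(p_1(F))$, $\intr(p_2(F))$, $\intr(p_3(F))$, and in general this slope depends on $\theta$ (otherwise every affine imposing function would essentially force $\pi^1=\pi^2=\pi$, contradicting non-extreme examples such as the function of Figure~\ref{figure:flatFunction}). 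Accordingly, your concluding claim that each of $\pi,\pi^1,\pi^2$ is affine on $\intr(J)$ ``with the common slope~$c$'' is an over-statement; the induction should be run per~$\theta$, carrying a slope $c_\theta$ along the path, which is exactly what Lemma~\ref{lemma:pointLemma} preserves and all that Theorem~\ref{theorem:AI}\,(ii) asserts. With that reading, your argument is correct and coincides with the paper's.
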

\begin{proof}
From Lemma \ref{lemma:pointLemma}, it follows that if $\pi$ is affine imposing in $I$ and $\{\EqClass{I},\EqClass{J}\} \in \E$, then $\pi$ is affine imposing in $J$.
Let $J \in \S^2_{q,\edge}$.   Then $J$ must be in a connected component containing an interval $I \in \I^{2}_{q,\edge}$.  
By induction on the length of a path between $\EqClass{I}$ and $\EqClass{J}$
in $\G$,
$\pi$ is affine imposing in each interval whose equivalence 
class is a node of the connected component~$\G_I$, and therefore is affine
imposing in $J$.   
We conclude that $\pi$ is affine imposing in $\S^2_{q,\edge}$. 
It follows directly from Lemma \ref{lemma:pointLemma} that for each interval $I \in \S^2_{q,\edge}$, $\theta$ has the same slope in $J$ for every interval $J \in \G_I$ and $\theta = \pi, \pi^1, \pi^2$.
\end{proof}

\begin{corollary}
  \label{cor:AI-everywhere}
  Suppose $\S^2_{q,\edge} = \Iedge$.  Then $\pi$ is affine imposing in $\Iedge$.
\end{corollary}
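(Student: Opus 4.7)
The corollary is an immediate consequence of Theorem~\ref{theorem:AI}(i), which already establishes that $\pi$ is affine imposing on the entire set $\S^2_{q,\edge}$. My plan is therefore to invoke that theorem directly and observe that the hypothesis $\S^2_{q,\edge} = \Iedge$ lets us replace $\S^2_{q,\edge}$ by $\Iedge$ in its conclusion.

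More concretely, I would first recall the definition: $\pi$ is affine imposing in a collection of intervals $\I$ if for every $I \in \I$, and for every pair of valid functions $\pi^1, \pi^2$ with $\pi = \tfrac12(\pi^1 + \pi^2)$, both $\pi^1$ and $\pi^2$ are affine on $\intr(I)$. Then I would apply Theorem~\ref{theorem:AI}(i) to conclude that this condition holds for every $I \in \S^2_{q,\edge}$. Under the hypothesis $\S^2_{q,\edge} = \Iedge$, this is exactly the statement that $\pi$ is affine imposing in $\Iedge$.

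There is no real obstacle here — the work has already been done upstream, in Lemma~\ref{lemma:patching-interval-lemma} (which handles covered intervals via the Interval Lemma together with a patching argument), in the definition of the graph $\G$ and Lemma~\ref{lemma:pointLemma} (which propagates affineness along edges of $\G$), and in Theorem~\ref{theorem:AI} itself (which combines these to cover entire connected components). The corollary just packages the case in which the graph-propagation reaches every node, i.e., $\S^2_{q,\edge}$ exhausts $\Iedge$. The proof is therefore a single sentence citing Theorem~\ref{theorem:AI}(i).
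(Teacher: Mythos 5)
Your proposal is correct and coincides with the paper's own treatment: the corollary is stated there without a separate proof precisely because it follows immediately from Theorem~\ref{theorem:AI}\,(i) by substituting $\Iedge$ for $\S^2_{q,\edge}$ under the hypothesis. Nothing further is needed.
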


The function in Figure \ref{figure:flatFunction} illustrates how intervals can
be connected in~$\G$ and that intervals on which $\pi$ has the same slope are
not necessarily connected. 

\subsection{Non-extremality by equivariant perturbation}
\label{sec:non-extreme-by-perturbation}

In this subsection, we will prove the following result.

\begin{lemma}
  \label{lemma:not-extreme}
  If $\S^2_{q,\edge} \neq \Iedge$, then $\pi$ is not extreme.
\end{lemma}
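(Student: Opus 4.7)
The plan is to exhibit a nonzero perturbation $\bar\pi$ supported on the union of intervals belonging to a connected component of $\G$ that contains no covered interval, and then argue that $\pi^1=\pi+\epsilon\bar\pi$ and $\pi^2=\pi-\epsilon\bar\pi$ are both minimal for sufficiently small $\epsilon>0$. The building block of the perturbation will be the $\Gamma$-equivariant piecewise linear function $\psi$ from Lemma~\ref{lemma:our-equivariant-psi}, where $\Gamma=\langle\rho_{1/q},\tau_{1/q}\rangle$. This $\Gamma$ acts on the complex $\P_q$ by permuting the intervals of $\Iedge$, and every edge of the graph $\G$ corresponds to such a group element.

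First, I would pick $J\in\Iedge\setminus\S^2_{q,\edge}$ and let $C=\G_J$ be the connected component of $\EqClass{J}$; by definition of $\S^2_{q,\edge}$, $C$ contains no class of a covered interval. Next, I would define
\begin{equation*}
  \bar\pi(x)=
  \begin{cases}
    \psi(x) & \text{if } x\in\intr(J') \text{ for some } \EqClass{J'}\in C,\\
    0 & \text{otherwise.}
  \end{cases}
\end{equation*}
Since $\psi$ vanishes on $\tfrac1q\Z$, the function $\bar\pi$ is continuous and piecewise linear with breakpoints in $\tfrac1{4q}\Z$, and it is periodic modulo $\Z$. The key property to establish is that $\Delta\bar\pi_F\equiv 0$ on every face $F\in\Delta\P_q$ with $\Delta\pi_F\equiv 0$.

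The verification breaks by the dimension of $F$. If $F$ is a $2$-face with $\Delta\pi\equiv 0$ on $\intr(F)$, then the three projections $p_i(F)$ are covered, hence lie in $\S^2_{q,\edge}$ and thus outside $C$, so $\bar\pi$ vanishes on their closures and $\Delta\bar\pi\equiv 0$ on $F$. If $F$ is a $1$-face (say diagonal, $p_3(F)=\{r\}$ with $r\in\tfrac1q\Z$), then by Definition~\ref{def:graph} the pair $\{\EqClass{p_1(F)},\EqClass{p_2(F)}\}$ is an edge of $\G$, so either both classes lie in $C$ or neither does; in the former case the reflection equivariance $\psi(r-x)=-\psi(x)$ (valid because $\rho_r\in\Gamma$) yields $\bar\pi(x)+\bar\pi(r-x)=0=\bar\pi(r)$, and in the latter case all three terms are zero. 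The horizontal and vertical $1$-faces are handled identically using $\tau_v$-equivariance (Cases~2 and 3 of the graph). $0$-faces are automatic because $\bar\pi$ vanishes on $\tfrac1q\Z$.

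Finally, I would verify minimality of $\pi\pm\epsilon\bar\pi$ via Theorem~\ref{minimality-check} applied to the refined complex $\Delta\P_{4q}$. The boundary values $\pi^i(0)$, $\pi^i(1)$, $\pi^i(f)$ are correct since $0,1,f\in\tfrac1q\Z$ and $\bar\pi$ vanishes there. Symmetry follows from the fact that the reflection relation $\pi(x)+\pi(f-x)=\pi(f)$ is among those handled in the previous step. For subadditivity at a vertex $(u,v)$ of $\Delta\P_{4q}$, let $F$ be the minimal face of $\Delta\P_q$ containing $(u,v)$; then $\Delta\pi_{F'}(u,v)=\Delta\pi_F(u,v)$ for the containing face $F'$ of $\Delta\P_{4q}$, and affineness plus non-negativity force $\Delta\pi_F\equiv 0$ on $F$ whenever $\Delta\pi_{F'}(u,v)=0$, reducing us to the case handled above; otherwise $\Delta\pi_{F'}(u,v)>0$ and we choose $\epsilon$ smaller than the minimum such positive value divided by $3\|\bar\pi\|_\infty$. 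Non-negativity of $\pi^i$ is the most delicate remaining point: wherever $\pi(x)=0$ at an interior point of some $I\in\Iedge$, affineness forces $\pi\equiv 0$ on $I$, and then $\Delta\pi\equiv 0$ on $F(I,I,K)$ for appropriate $K$ shows $I$ is covered, so $\bar\pi=0$ there. Since $\bar\pi$ coincides with $\psi$ on $\intr(J)$, it is nonzero, giving $\pi^1\neq\pi^2$ and establishing non-extremality. The main obstacle is the careful bookkeeping at step~3 to rule out ``mixed'' tight relations across the boundary of the component $C$; this is precisely what the graph $\G$ was designed to prevent.
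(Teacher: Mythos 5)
Your overall strategy is the same as the paper's: restrict the $\Gamma$-equivariant function $\psi$ of Lemma~\ref{lemma:our-equivariant-psi} to the union of (interiors of) intervals in an uncovered connected component, perturb by a small multiple in both directions, and verify minimality of $\pi\pm\epsilon\bar\pi$ via Theorem~\ref{minimality-check} on the refined complex $\Delta\P_{4q}$, with the graph edges plus equivariance providing the cancellation $\Delta\bar\pi(u,v)=0$ at additive vertices. For \emph{continuous} $\pi$ your argument goes through.

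However, there is a genuine gap in the discontinuous case, which is within the scope of the lemma. In your reduction step you take $F$ to be the minimal face of $\Delta\P_q$ containing $(u,v)$ and assert $\Delta\pi_{F'}(u,v)=\Delta\pi_F(u,v)$. This identity is false when $\pi$ is discontinuous: $\Delta\pi_{F'}(u,v)$ is the limit of $\Delta\pi$ along $\relint(F')$ (Lemma~\ref{lemma:face-limits}), and the correct coarse face to compare with is the face $F\in\Delta\P_q$ with $\relint(F')\subseteq\relint(F)$, whereas $\Delta\pi_E(u,v)$ for the minimal face $E$ (the one with $(u,v)\in\relint(E)$) uses the actual one-sided/pointwise values of $\pi$ at breakpoints and can differ by jump amounts. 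Consequently, from $\Delta\pi_{F'}(u,v)=0$ you may only conclude the \emph{limit} relation $\Delta\pi_F\equiv 0$ on $\relint(E)$ with $E\subsetneq F$, not the value relation $\Delta\pi_E\equiv 0$; but your Step~A only treats faces on which the face's own $\Delta\pi$ vanishes, so these limit-tight $1$-faces are never handled, and your reduction ``to the case handled above'' breaks exactly there. This is precisely why Definition~\ref{def:graph} allows $E\subsetneq F$ and why the paper's argument (Subcases~3a and 3b, together with Lemma~\ref{lemma:tight-implies-tight}) works throughout with $\Delta\pi_F$ on $\relint(E)$ rather than with pointwise additivity; patching your proof requires importing exactly that limit machinery. (Your final worry about non-negativity of $\pi^i$ is unnecessary: subadditivity, symmetry, $\pi^i(0)=0$ and periodicity already yield it for bounded piecewise linear functions, which is how the paper's minimality test is used.)
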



\begin{proof}
Let $I \in \Iedge\setminus \S^2_{q,\edge}$.  
This is an interval for which $\pi$ is not known to be affine imposing.
Consider the set 
\begin{displaymath}
  R = \bigcup_{\substack{J\in \P_{q,\edge}\\ [J] \in \G_I}} \intr(J),
\end{displaymath}
a union over all
intervals whose equivalence classes are connected to~$\EqClass{I}$ in the
graph~$\G$.   
Note that $\pi$ is not known to be affine imposing over any of these
intervals~$J$.  Indeed we will show that we can perturb the function~$\pi$
simultaneously over all these intervals using a piecewise linear function with
breakpoints in~$\frac1{4q}\Z$.

To this end, let $\psi$ be the $\Gamma$-equivariant function of
Lemma~\ref{lemma:our-equivariant-psi}.  This is a continuous piecewise linear
function with 
breakpoints in~$\frac1{4q}\Z$, which is zero on all of the breakpoints in
$\frac1{2q}\Z$. 
Let $\bar \pi = \delta_R  \cdot\psi$ where $\delta_R$ is the indicator
function for the set $R$. 
By multiplying with the indicator function, we restrict the
$\Gamma$-equivariant function to the set~$R$; outside of~$R$, the resulting
function~$\bar\pi$ is zero.  Because the boundary of~$R$ consists of
breakpoints in $\frac1q\Z$, where $\psi$ is zero, the function~$\bar\pi$ is
continuous. 
Let 
$$
\epsilon = \min \Bigl\{\,\Delta\pi_{\hat F}(x,y)\neq 0 \mathrel{\Big|} {\hat
  F} \in \Delta\P_{\frac1{4q}\Z}, \ \hat F\subseteq[0,1]^2, \ (x,y) \in \verts(\hat F)\,\Bigr\}.
$$
Note that $\epsilon$ is well-defined and positive because it is a minimum over
a finite number of positive numbers. Here we consider~$\pi$ as a piecewise
linear function with breakpoints in~$\frac1{4q}\Z$ and use the fact that $\pi$
is subadditive. 
We will show that for 
$$\pi^1 = \pi + \tfrac{\epsilon}{3}\bar \pi, \  \ \pi^2 = \pi - \tfrac{\epsilon}{3}\bar \pi,$$
that $\pi^1, \pi^2$ are minimal valid functions, and hence $\pi$ is not extreme.  We will show this just for $\pi^1$ as the proof for $\pi^2$ is the same.

First, observe that $\pi^1$ is periodic modulo~$\Z$. 

Also, since $\psi(0) =  0$ and $\psi(f) = 0$, we see that $\pi^1(0) =  0$ and $\pi^1(f)= 1$.   

We want to show that $\pi^1$ is symmetric and subadditive. We will do this by
analyzing the function $\Delta\pi^1(x,y) = \pi^1(x) +  \pi^1(y) -
\pi^1(x+ y)$ and showing that $\Delta\pi^1(x,y)\geq0$ for all $x,y\in\R$.  Since $\psi$ is piecewise linear over $\frac1{4q}\Z$,
$\pi^1$ is also piecewise linear with breakpoints in $\frac1{4q}\Z$.  By
Theorem~\ref{minimality-check},
we only need to check $\Delta\pi^1(x,y)\geq0$ for all vertices $(x,y)$ of the
complex $\Delta\P_{4q}$. 

Let $\hat{I}, \hat{J}, \hat{K} \in \Ipoint[4q] \cup \Iedge[4q]$, such that ${\hat F} = \{\,(x,y)
\st x \in \hat{I}, y \in \hat{J}, x+ y \in \hat{K}\,\} \in \Delta\P_{4q}$ is non-empty. Let
$\Delta\pi^1_{\hat F}(u,v) = \pi^1_{\hat{I}}(u) +\pi^1_{\hat{J}}(v) - \pi^1_{\hat{K}}(u+ v)$. Let
$(u,v) \in \verts(\hat F)$.  

In the following, we consider two cases: If $\Delta\pi_{\hat F}(u,v) > 0$ (strict subadditivity), we make use of our
choice of $\epsilon$ to show that $\Delta\pi^1_{\hat F}(u,v) \geq 0$.  On the
other hand, if $\Delta\pi_{\hat F}(u,v)= 0$ (additivity), we show that
$\Delta\pi^1_{\hat F}(u,v) = 0$.  This will prove two things.  First,
$\Delta\pi^1(x,y) \geq 0$ for all $x,y \in \R$, and therefore $\pi^1$ is
subadditive.  Second, since $\pi$ is symmetric,  if ${\hat F} \subset
\{\,(x,y)\st x+ y \equiv f \pmod1\,\}$, then $\Delta\pi_{\hat F}(x,y) = 0$ for all
$(x,y)\in \verts(\hat F)$,  which would imply that $\Delta\pi_{\hat F}^1(x,y)
= 0$ for all $(x,y) \in \verts(\hat F)$, proving that $\pi^1$ is
symmetric.\smallbreak

\noindent \textbf{Strictly subadditive case.} First, if $\Delta\pi_{\hat
  F}(u,v) > 0$, then $\Delta\pi_{\hat F}(u,v) \geq \epsilon$.  
Therefore, since $|\bar\pi|\leq 1$,
$$\Delta\pi_{\hat F}^1(u,v) \geq \pi_{\hat{I}}(u) - \epsilon/3 + \pi_{\hat{J}}(v) -
\epsilon/3 - \pi_{\hat{K}}(u+ v) - \epsilon/3 = \Delta\pi_{\hat F}(u,v) -
\epsilon \geq 0.$$ %

\begin{figure}[t]
\centering
(a)\quad\vcenteredhbox{\includegraphics[scale = 0.8]{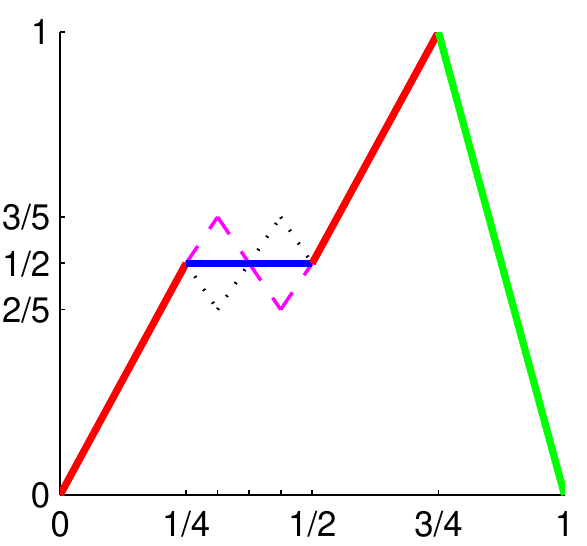}}
(b)\quad\vcenteredhbox{\scalebox{0.65}{\ifpdf
\input{figures/notAInotExtreme4new.pdftex_t}
\else
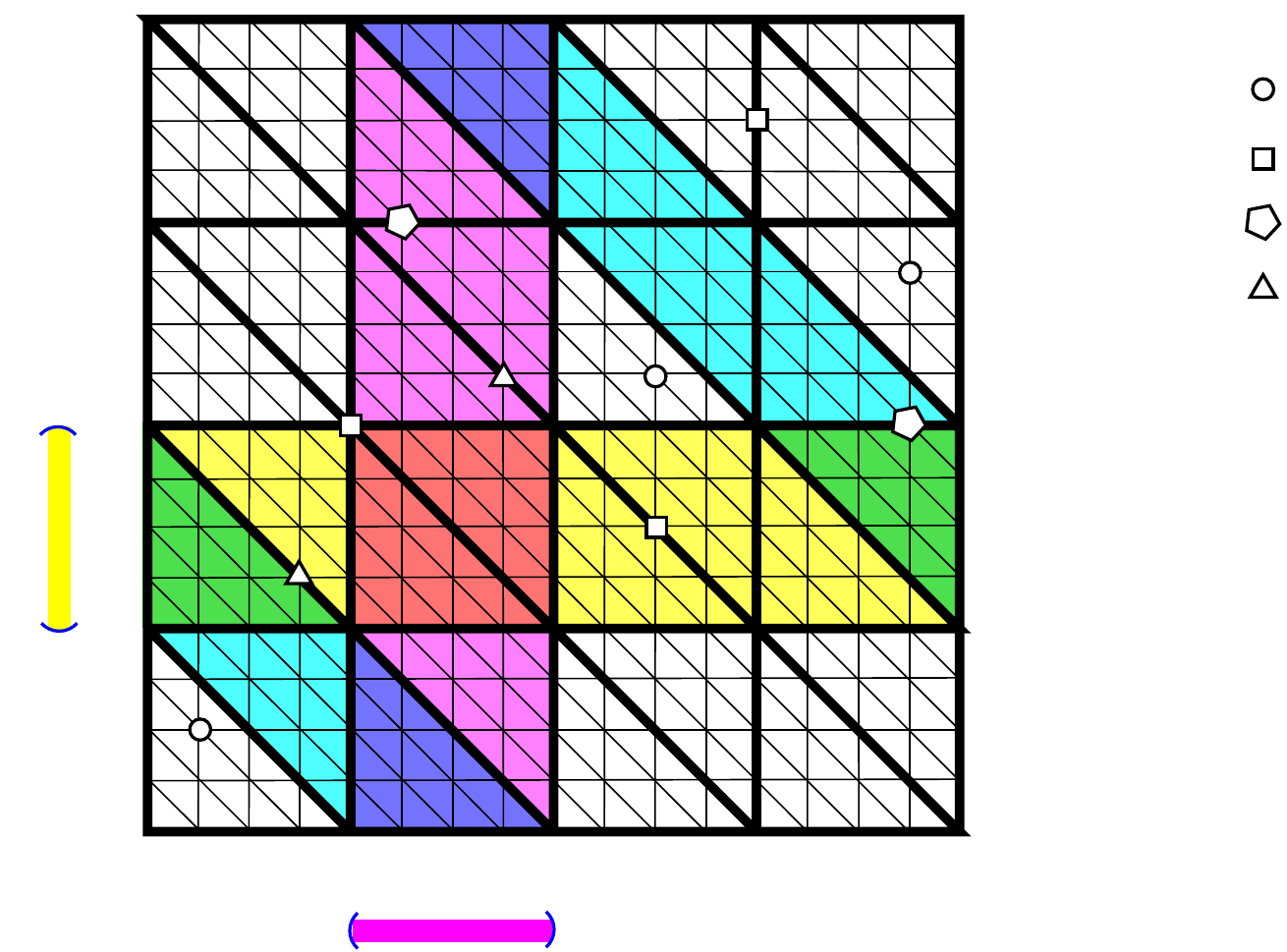
\fi}}
\caption{Cases within the proof of Lemma \ref{lemma:not-extreme} on a sample
  non-extreme function.  
  (a)~The function~$\pi$ is piecewise linear on~$\P_q = \P_4$
  with three slopes (\emph{red, blue, green solid lines}).  We construct the perturbed functions~$\pi^1$ (\emph{dashed lines})
  and $\pi^2$ (\emph{dotted lines}). 
(b)~The
  two-dimensional complex $\Delta \P_q=\Delta\P_4$ (\emph{thick black lines}) and its refinement
  $\Delta\P_{4q}=\Delta\P_{16}$ (\emph{thin black lines}).  The
  union $R$ of open sets is the open interval $(\tfrac14, \tfrac12)$ in this
  example.  The regions of points $(u,v)$ with $u \in R$ (\emph{magenta}), $v \in R$ (\emph{yellow}), or $u + v \in
  R$ (\emph{cyan}) are \emph{shaded} using the subtractive color model; thus we see the points 
  with $u,v\in R$ (\emph{red}), $u, u + v\in R$ (\emph{blue}), $v, u + v\in R$
  (\emph{green}). 
  In Subcase 1 (\emph{circles}), $u,v,u + v \notin
  R$ (\emph{unshaded white regions}).  In Subcase 2 (\emph{squares}), $u,v \in \tfrac{1}{2q} \Z$.  In Subcase 3, we are
  not in Subcases 1 or~2;  we show that $(u,v)$ lies in the relative interior
  of a $1$-dimensional face~$F$. In Subcase 3a (\emph{pentagons}), $F \subset \{\,(x,y) \st y =
  v\,\}$ and $v \in \tfrac{1}{q} \Z$.  
  In Subcase 3b (\emph{triangles}), $F \subset \{\,(x,y) \st x + y = u + v\,\}$ and 
  $u + v \in \tfrac{1}{q} \Z$.
}\label{figure:notAInotExtreme}
\end{figure}

\noindent \textbf{Additive case.} Next, we will show that if $\Delta\pi_{\hat
  F}(u,v) = 0$, then $\Delta\pi_{\hat F}^1(u,v) = 0$.    
Suppose that $\Delta\pi_{\hat F}(u,v) = 0$.  We will proceed by cases.  See
Figure \ref{figure:notAInotExtreme} for an illustration of these
cases.\smallskip

\noindent\textbf{Subcase 1.}  Suppose $u,v, u + v \notin R$.  Then
$\delta_R(u) = \delta_R(v) = \delta_R(u + v) = 0$, and $\Delta\pi^1_{\hat
  F}(u,v) = \Delta\pi_{\hat F}(u,v) \geq 0$.\smallbreak

\noindent\textbf{Subcase 2.} Suppose $u,v \in \tfrac{1}{2q} \Z$.  Then $u + v \in
\tfrac{1}{2q}\Z$ and, by Lemma~\ref{lemma:our-equivariant-psi}~(i),
$\psi(u) = \psi(v) = \psi(u + v) = 0$.  Thus $\Delta\pi_{\hat F}^1(u,v) =
\Delta\pi_{\hat F}(u,v) \geq 0$.\smallbreak

\noindent \textbf{Subcase 3.} Suppose we are not in subcases 1 or 2.  That is, suppose $\Delta\pi_{\hat F}(u,v) = 0$, not both $u,v$ are in $\smash[b]{\tfrac{1}{2q} \Z}$, and at least one of $u,v,u + v$ is in $R$.
Since $\Delta\pi^1(x,y)$ is symmetric in $x$ and~$y$, without loss of generality, since not both $u,v$ are in $\tfrac{1}{2q} \Z$, we will assume that $u \notin \tfrac{1}{2q} \Z$. 

There exists a unique face $F \in
\Delta\P_q$ with $\relint(\hat F) \subseteq \relint(F)$. Then
$\Delta\pi_F(x,y) = \Delta\pi(x,y) = 
\Delta\pi_{\hat{F}}(x,y)$ for all $(x,y)\in\relint(\hat{F})$.  
This implies 
$\Delta\pi_F(x,y) = \Delta\pi_{\hat{F}}(x,y)$ for all $(x,y)\in\hat{F}$.
There is a unique face $E \in \Delta\P_q$ with $E\subseteq F$ and
$(u,v)\in\relint(E)$. 
Since $u \notin \tfrac{1}{2q} \Z$, $(u,v) \notin \verts(\Delta\P_q)$, and thus
$E$ is $1$-dimensional or $2$-dimensional.
Since $\pi$ is subadditive,
$\Delta\pi_{F}(x,y) = \Delta\pi(x,y) \geq 0$ for all $(x,y) \in \relint(F)$,
and thus $\Delta\pi_{F}(x,y) \geq 0$ for all $(x,y) \in F$. 
Now since the affine function $\Delta\pi_{F}$ equals $0$ at the point
$(u,v)\in \relint(E) \subseteq F$,  
we have 
\begin{displaymath}
  \Delta\pi_{F}(x,y) = 0 \quad\text{for all}\quad (x,y)\in\relint(E). 
\end{displaymath}
If $E$ were a $2$-dimensional face, then
$u\in \intr(I)$, $v \in \intr(J)$, and $u + v\in \intr(K)$ for some $I,J,K \in \I^2_{q,\edge}$,
thus $u,v,u + v\notin R$, which is subcase~1. Therefore, we can assume
that $E \in \Delta\P_q$ is a $1$-dimensional face and hence a subset of one of the three following hyperplanes: $x = u$, $y = v$, or $x +y = u  + v$.

Since $u \notin  \tfrac{1}{2q}\Z \supset \tfrac{1}{q} \Z$, $E$ cannot be a
subset of $x = u$ because it is not a defining hyperplane of the polyhedral
complex $\Delta\P_q$. Observe that for any $x \in \R$ with $x \notin
\tfrac{1}{q} \Z$, there is a unique $I_x \in \Iedge$ such that $x \in \intr(I_x)$.
Since $u \notin \tfrac{1}{q} \Z$, there exists a unique interval $I_u \in
\Iedge$ such that $u \in \intr(I_u)$.  There are two possible subcases.   \smallbreak

 \indent \textbf{Subcase 3a.}  $E \subset \{\,(x,y) \st y = v\,\}$ and $v \in \tfrac{1}{q} \Z$.\\
 Since $v \in \tfrac{1}{q} \Z, u \notin  \tfrac{1}{2q}\Z$, it follows that $u
 + v \notin \tfrac{1}{q} \Z$, and there is a unique interval $I_{u + v} \in
 \Iedge$ containing $u + v$ in its interior.   
Then $p_1(E) = I_u$, $p_2(E) = \{v\}$, and $p_3(E) = {\tau}_v(I_u) = I_{u+v}$, and 
$\Delta\pi_{F}(x,y) = 0$ for $(x,y)\in \relint(E)$.
Therefore, by Definition~\ref{def:graph}, $\{\EqClass{I_u}, \EqClass{I_{u + v}}\} \in
\E$ and $\delta_R(u) = \delta_R(u + v)$.  Since $v \in \tfrac{1}{q} \Z$, we have
$\psi(v) = 0$ and $\psi(u) = \psi({\tau}_v(u)) = \psi(u + v)$ by
Lemma~\ref{lemma:our-equivariant-psi}~(iii).  It follows that
$\Delta\bar\pi(u,v) = \bar \pi(u) +
\bar\pi(v) - \bar\pi(u + v) = 0$, and therefore, since $\bar\pi$ is
continuous,
$\Delta\pi_{\hat F}^1(u,v)
= \Delta\pi_{\hat F}(u,v) + \Delta\bar\pi(u,v)
= \Delta\pi_{\hat F}(u,v) = 0$.\smallbreak

\indent \textbf{Subcase 3b.}  $E \subset \{\,(x,y) \st x + y = u + v\,\}$ and $u + v \in \tfrac{1}{q} \Z$.\\
 Since $u + v \in \tfrac{1}{q} \Z, u \notin \tfrac{1}{2q} \Z$, it follows that
 $v \notin \tfrac{1}{q} \Z$, and there is a unique interval $I_{v}\in \Iedge$
 containing $v$ in its interior.   
 Then $p_1(E) = I_u$, $p_2(E) = \rho_{u + v}(I_u) = I_v$, $p_3(E) = \{u+v\}$, 
 and $\Delta\pi_{F}(x,y) = 0$ for $(x,y)\in \relint(E)$.
 Therefore, by Definition~\ref{def:graph}, $\{\EqClass{I_u}, \EqClass{I_{v}}\}
 \in \E$ and $\delta_R(u) = \delta_R(v)$.   
 Since $u + v \in \tfrac{1}{q} \Z$, we have $\psi(u+v) = 0$ and $\psi(u) = -
 \psi({\rho}_{u + v}(u)) = - \psi(v)$ by
 Lemma~\ref{lemma:our-equivariant-psi}~(ii).  It follows that
 $\Delta\bar\pi(u,v) = \bar \pi(u) +
 \bar\pi(v) - \bar\pi(u + v) = 0$, and therefore, since $\bar\pi$ is
 continuous, $\Delta\pi_{\hat F}^1(u,v) = \Delta\pi_{\hat F}(u,v) + \Delta\bar\pi(u,v)
 = \Delta\pi_{\hat F}(u,v) = 0$.\medskip

 We conclude that $\pi^1$ (and similarly $\pi^2$) is subadditive and
 symmetric, and therefore by Theorem~\ref{minimality-check} minimal and hence
 valid.  Therefore $\pi$ is not extreme. 
\end{proof}

\begin{remark}
To show that $\pi$ is not extreme in the above lemma, the perturbation
function $\psi$ need not be piecewise linear.  This choice was  made to
simplify the proof.  In fact, any $\Gamma$-equivariant function
$\tilde\psi\neq 0$ constructed with Lemma \ref{lemma:phi} with $|\tilde\psi| <
|\psi|$ suffices. 
Compare Figures \ref{fig:equivariant-general} and \ref{figure:phi}.
\end{remark}

However, the specific form of our function~$\psi$ as a piecewise linear
function with breakpoints in $\frac1{4q}\Z$ (Lemma~\ref{lemma:our-equivariant-psi}~(iv)) implies the following corollary.

\begin{corollary}
\label{corollary:AIG4}
If $\pi$ is not affine imposing over $\Iedge$, then there exist distinct minimal
$\pi^1, \pi^2$ that are piecewise linear with breakpoints in $\frac1{4q}\Z$ such that $\pi = \tfrac{1}{2}(\pi^1+ \pi^2)$.  
\end{corollary}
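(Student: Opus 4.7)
The plan is to reduce the corollary directly to Lemma~\ref{lemma:not-extreme}, then revisit the construction in that proof to read off the breakpoint structure of $\pi^1$ and $\pi^2$.

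First I would establish the implication at the level of hypotheses. By Theorem~\ref{theorem:AI}\,(i), $\pi$ is affine imposing on every interval in $\S^2_{q,\edge}$. Hence if $\pi$ fails to be affine imposing on some $I \in \Iedge$, we must have $I \notin \S^2_{q,\edge}$, so $\S^2_{q,\edge} \subsetneq \Iedge$. This is exactly the hypothesis of Lemma~\ref{lemma:not-extreme}, which therefore gives us $\pi = \tfrac12(\pi^1+\pi^2)$ with $\pi^1,\pi^2$ distinct minimal valid functions defined by $\pi^1 = \pi + \tfrac{\epsilon}{3}\bar\pi$ and $\pi^2 = \pi - \tfrac{\epsilon}{3}\bar\pi$, where $\bar\pi = \delta_R \cdot \psi$ and $\psi$ is the $\Gamma$-equivariant function of Lemma~\ref{lemma:our-equivariant-psi}.

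Next I would verify the breakpoint claim. By Lemma~\ref{lemma:our-equivariant-psi}\,(iv), $\psi$ is piecewise linear with breakpoints in $\tfrac{1}{4q}\Z$. The set $R$ is a union of open intervals $\intr(J)$ with $J \in \Iedge$, so its endpoints lie in $\tfrac{1}{q}\Z \subseteq \tfrac{1}{4q}\Z$, and the proof of Lemma~\ref{lemma:not-extreme} already observes that $\bar\pi$ is continuous because $\psi$ vanishes on $\tfrac1q\Z$. Consequently $\bar\pi$ is a continuous piecewise linear function with breakpoints in $\tfrac{1}{4q}\Z$. Since $\pi$ itself has breakpoints in $\tfrac{1}{q}\Z \subseteq \tfrac{1}{4q}\Z$, the two functions $\pi^1 = \pi + \tfrac{\epsilon}{3}\bar\pi$ and $\pi^2 = \pi - \tfrac{\epsilon}{3}\bar\pi$ are piecewise linear with breakpoints in $\tfrac{1}{4q}\Z$, as required.

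Finally, distinctness of $\pi^1$ and $\pi^2$ amounts to $\bar\pi \not\equiv 0$. The set $R$ contains at least one interval $J \in \Iedge$, which has length $\tfrac{1}{q}$, and by construction $\psi$ attains the value $\pm 1$ on any interval of length $\tfrac{1}{q}$ (being $\Gamma$-equivariant with fundamental domain $[0,\tfrac{1}{2q}]$ and equal to $1$ at $\tfrac{1}{4q}$). Hence $\bar\pi$ is nonzero on $J$, which gives $\pi^1 \neq \pi^2$. The only real bookkeeping step is the verification that $\bar\pi$ is piecewise linear on $\tfrac{1}{4q}\Z$; everything else is a direct citation of Lemma~\ref{lemma:not-extreme}, so no new technical obstacle arises.
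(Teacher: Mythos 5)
Your proposal is correct and follows the paper's own route: the corollary is obtained exactly by combining the contrapositive of Corollary~\ref{cor:AI-everywhere} (equivalently Theorem~\ref{theorem:AI}\,(i)) with the explicit construction $\pi^{1,2} = \pi \pm \tfrac{\epsilon}{3}\,\delta_R\,\psi$ from the proof of Lemma~\ref{lemma:not-extreme}, noting via Lemma~\ref{lemma:our-equivariant-psi}\,(iv) that $\psi$, and hence $\pi^1,\pi^2$, are piecewise linear with breakpoints in $\tfrac{1}{4q}\Z$. Your added checks (continuity of $\bar\pi$ at the endpoints of $R$ and nonvanishing of $\psi$ on an interval of $R$, giving distinctness) are exactly the bookkeeping the paper leaves implicit.
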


\subsection{Extremality and non-extremality by a system of linear equations}
\label{section:system-use}

Now we are able to prove that the finite system of linear equations,
introduced in section \ref{section:system}, can decide extremality of~$\pi$ if
the set $B$ of breakpoints is chosen appropriately.

\begin{theorem}\label{thm:1/4q test}
Let $\pi$ be a piecewise linear minimal valid function (possibly
discontinuous) whose  
breakpoints are rational with least common denominator~$q$.  
Then $\pi$ is extreme if and only if the system of
equations~\eqref{equation:system} with $\B = \tfrac{1}{4q}\Z$
has a unique solution.
\end{theorem}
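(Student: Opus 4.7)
The plan is to establish both directions of the equivalence using results already developed in the paper. For the forward direction, I would apply Theorem~\ref{theorem:systemNotUnique} with $B = \tfrac{1}{4q}\Z$: since $\tfrac{1}{q}\Z \subseteq \tfrac{1}{4q}\Z$, the function $\pi$ is piecewise linear with breakpoints in $B$ (with some trivial or ``fake'' breakpoints at the added grid points), so non-uniqueness of \eqref{equation:system} would immediately contradict extremality.

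The backward direction I would argue by contrapositive: assuming $\pi$ is not extreme, I exhibit two distinct solutions of \eqref{equation:system} with $B = \tfrac{1}{4q}\Z$, via a case split on whether $\pi$ is affine imposing on $\I_{q,\edge}$ relative to the original grid $\tfrac{1}{q}\Z$. If $\pi$ is affine imposing on $\I_{q,\edge}$, then Corollary~\ref{corollary:iff} applied at the grid $\tfrac{1}{q}\Z$ gives a second solution $\varphi \neq \pi$ of the coarser system \eqref{equation:system} with $B = \tfrac{1}{q}\Z$. Viewing $\varphi$ as piecewise linear with breakpoints in $\tfrac{1}{4q}\Z$, I would verify that $\varphi$ also solves the refined system. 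If $\pi$ is not affine imposing on $\I_{q,\edge}$, then Corollary~\ref{corollary:AIG4} directly provides distinct minimal valid functions $\pi^1, \pi^2$ piecewise linear with breakpoints in $\tfrac{1}{4q}\Z$ such that $\pi = \tfrac12(\pi^1+\pi^2)$. Lemma~\ref{lemma:tight-implies-tight} applied to $\Delta\P_{4q}$ certifies that both satisfy the refined system, and since $\pi^1 \neq \pi^2$, at least one of them differs from $\pi$.

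The main obstacle is the lifting step in the first case. The key observation is that every face $\hat F \in \Delta\P_{4q}$ with $\hat F \subseteq [0,1]^2$ is contained in a unique face $F \in \Delta\P_q$, and the affine functions $\Delta\pi_{\hat F}$ and $\Delta\varphi_{\hat F}$ agree with $\Delta\pi_F$ and $\Delta\varphi_F$, respectively, at every $(u,v) \in \hat F$, because the affine extensions $\pi_{\hat I}$, $\pi_{\hat J}$, $\pi_{\hat K}$ coincide with $\pi_I$, $\pi_J$, $\pi_K$ on the shared evaluation points. Since $\Delta\pi_F$ is affine and non-negative on $F$, its zero set is a face $G$ of $F$ (or empty, or $F$ itself); the coarser equations force $\Delta\varphi_F$ to vanish at every vertex of $G \subseteq \verts(F)$, and hence, by affineness of $\Delta\varphi_F$, on all of $G$. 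Consequently, at any $(u,v) \in \verts(\hat F)$ with $\Delta\pi_{\hat F}(u,v) = 0$, one has $(u,v) \in G$ and therefore $\Delta\varphi_{\hat F}(u,v) = \Delta\varphi_F(u,v) = 0$, confirming that $\varphi$ satisfies \eqref{equation:system} at the refined grid and supplying the required second solution.
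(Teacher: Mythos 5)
Your proposal is correct, and both your forward direction and your second case coincide with the paper's own argument (Theorem~\ref{theorem:systemNotUnique}, respectively Corollary~\ref{corollary:AIG4} combined with minimality and Corollary~\ref{cor:tight-implies-tight-with-pi1F}). Where you genuinely diverge is the affine-imposing case of the backward direction. The paper never descends to the coarse grid: assuming the refined system has a unique solution, it observes that distinct minimal $\pi^1,\pi^2$ with breakpoints in $\tfrac1{4q}\Z$ would give two distinct solutions, so by the contrapositive of Corollary~\ref{corollary:AIG4} the function $\pi$ is affine imposing in $\Iedge$, hence also in $\Iedge[4q]$ (refined intervals are subintervals of coarse ones), and then Corollary~\ref{corollary:iff} is invoked directly with $\B=\tfrac1{4q}\Z$; no transfer of solutions between the two systems is needed. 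You instead invoke Corollary~\ref{corollary:iff} at the grid $\tfrac1q\Z$ and then lift the second coarse solution $\varphi$ to the refined system. Your lifting argument is sound: for $\hat F\in\Delta\P_{4q}$, taking the unique coarse face $F$ with $\relint(\hat F)\subseteq\relint(F)$ (this is the precise form of your ``contained in a unique face'' claim, and one should note this $F$ lies in $[0,1]^2$ so its equations do appear in the coarse system), the affine functions satisfy $\Delta\pi_{\hat F}=\Delta\pi_F$ and $\Delta\varphi_{\hat F}=\Delta\varphi_F$ on all of $\hat F$ since they agree on $\relint(\hat F)$; subadditivity and Lemma~\ref{lemma:face-limits} give $\Delta\pi_F\geq0$ on the polytope $F$, so its zero set is a face $G$ with $\verts(G)\subseteq\verts(F)$, the coarse equations force $\Delta\varphi_F=0$ on $\verts(G)$ and hence on $G$, and every refined vertex $(u,v)$ with $\Delta\pi_{\hat F}(u,v)=0$ lies in $G$. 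The trade-off: the paper's route is shorter because affine imposingness passes trivially from coarse to refined intervals, so only one application of Corollary~\ref{corollary:iff} (at $\tfrac1{4q}\Z$) is needed; your route costs the extra lifting step but yields the mildly useful byproduct that, for functions with breakpoints in $\tfrac1q\Z$, the refined system~\eqref{equation:system} imposes no constraints beyond the coarse one.
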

\begin{proof}
  The forward direction is the contrapositive of
  Theorem~\ref{theorem:systemNotUnique}, applied to the set of breakpoints $\B
  = \frac1{4q}\Z$. 
  For the reverse direction,   
  we assume that the the system of
  equations~\eqref{equation:system} with $\B = \tfrac{1}{4q}\Z$
  has a unique solution.
  Suppose that there exist distinct valid functions
  $\pi^1, \pi^2$ that are piecewise linear with breakpoints in $\frac1{4q}\Z$
  such that $\pi = \tfrac{1}{2}(\pi^1+ \pi^2)$.   
  By Lemma~\ref{lem:minimality-of-pi1-pi2}, $\pi^1$ and $\pi^2$ are
  minimal. 
  Then $\pi^1$ and $\pi^2$ are solutions to~\eqref{equation:system} with $\B =
  \tfrac{1}{4q}\Z$, a contradiction.  
  Thus, by the contrapositive of Corollary~\ref{corollary:AIG4}, $\pi$ is
  affine imposing in $\Iedge$.  
  Then $\pi$ is also affine imposing on $\Iedge[4q]$ since it is a finer interval set.  
  By Corollary~\ref{corollary:iff}, since $\pi$ is affine imposing in
  $\Iedge[4q]$ and the system of 
equations~\eqref{equation:system} with $\smash[t]{\B = \frac1{4q}\Z} $ has a
unique solution, $\pi$ is extreme.   
\end{proof}

\subsection{Proofs of Theorems~\ref{thm:main} and~\ref{thm:finite_group}}
\label{sec:proof-intro-thm}

We are now ready to present the proofs of Theorems~\ref{thm:main} and~\ref{thm:finite_group}.

\begin{proof}[Proof of Theorem~\ref{thm:main}]
  Given the piecewise linear function $\pi$, the algorithm performs the
  following test.  Test if the system~\eqref{equation:system} with $\B = \frac1{4q}\Z$ has a unique solution, where $q$ is the least common
  denominator of the breakpoints of $\pi$. If yes, then report that $\pi$ is
  extreme; else, report that $\pi$ is not extreme.
Theorem~\ref{thm:1/4q test} guarantees the correctness of this algorithm.

Finally, observe that the number of variables and constraints of
system~\eqref{equation:system} is bounded polynomially in~$q$, and it
can be written down and solved in a number of elementary operations over the reals that is
bounded by a polynomial in $q$. 
\end{proof}

\begin{proof}[Proof of Theorem~\ref{thm:finite_group}]
We first show that if $\pi$ is extreme, then $\pi|_{\frac1{4q}\Z}$ is extreme for $R_f(\frac1{4q}\Z, \Z)$. If not, then there exist two distinct minimal functions $\bar \pi^1, \bar \pi^2$, both functions from $\frac1{4q}\Z$ to $\R_+$, such that $\pi|_{\frac1{4q}\Z} = \frac12 (\bar \pi^1 + \bar \pi^2)$. Let $\pi^i$ be the linear interpolation of $\bar\pi^i$, $i = 1,2$. It can be verified that $\pi^i$ is minimal because $\bar \pi^i$ is minimal. This contradicts the extremality of $\pi$.

Now we show that if $\pi|_{\frac1{4q}\Z}$ is extreme for $R_f(\frac1{4q}\Z, \Z)$, then
$\pi$ is extreme. If $\pi$ is not extreme then by Theorem~\ref{thm:1/4q test}
the system of equations~\eqref{equation:system} does not have a unique
solution. Then we can construct $\pi^1$ and $\pi^2$ as in the proof of
Theorem~\ref{theorem:systemNotUnique} using the non-trivial element in the
kernel of~\eqref{equation:system}, such that $\pi = \frac12 (\pi^1 + 
\pi^2)$. Since $\pi$ is a continuous piecewise linear function, $\smash{\limsup_{h\to
  0}\lvert \frac{\pi(h)}{h} \rvert < \infty}$.
Theorem~\ref{Theorem:functionContinuous} then tells us that $\pi^1$ and $\pi^2$ both have to be continuous, and by construction have breakpoints in $\frac1{4q}\Z$. Thus, since $\pi^1 \neq \pi^2$, there exists a breakpoint $v \in \frac1{4q}\Z$ such that $\pi^1(v) \neq \pi^2(v)$. Thus, $\pi^1|_{\frac1{4q}\Z}$ and $\pi^2|_{\frac1{4q}\Z}$ are two distinct minimal valid functions for $R_f(\frac1{4q}\Z, \Z)$. Moreover, since $\pi = \frac12 (\pi^1 + \pi^2)$, we have that $\pi|_{\frac1{4q}\Z} = \frac12 \bigl(\pi^1|_{\frac1{4q}\Z} + \pi^2|_{\frac1{4q}\Z}\bigr)$. This contradicts the extremality of $\pi|_{\frac1{4q}\Z}$.
\end{proof}

\section{The irrational case: A new principle for proving extremality}
\label{sec:irrational}

In this section we investigate the properties of a family of piecewise linear
continuous functions, periodic modulo~$\Z$, which we illustrate in Figure
\ref{figure:irrationalFunction}.  Each of these functions has three slopes, one of which
is zero, and has translation points $a_0, a_1, a_2$ such that $\pi(a_i) +
\pi(x) = \pi(a_i + x)$ for $i=0,1,2$ and $x$ in a certain interval $[A,A_i]$.
When certain parameters are chosen appropriately, we will show that this
function is extreme.  In doing so, we showcase a new type of proof for a
function to be extreme.  

\subsection{Function requirements}
\label{subsection:requirements}
Here we explain restrictions that we require of some of the breakpoints of our
function; see also Figure~\ref{figure:Zoom}. 
\begin{assumption}\label{assum:requirements}
   Let $a_0, a_1, a_2, t_1, t_2, f, A, A_0, A_1, A_2 \in(0,1)$ such that the
   following hold: 
   \begin{enumerate}[\rm(i)]
   \item The numbers $t_1, t_2$ are linearly independent over $\Q$.
   \item We have $a_1 = a_0 + t_1, a_2 = a_0 + t_2$, and $0 < a_0 < a_1 < a_2
     < A < f/2$,  
   \item We have $a_i + A = f-A_i$ and $A_0 > A_1 > A_2 \geq \tfrac{A_0 + A}{2} > A \geq 0$.
  \end{enumerate}
\end{assumption}
Let $x_0$ be the midpoint of $[A,A_0]$, that is, $x_0 = (A + A_0)/2$.
Observe that, for $i=1,2$, 
$$
t_i =a_i - a_0 =  A_0 - A_i < A_0 - \frac{A_0 + A}{2} = \frac{A_0 - A}{2}.
$$
We then have $x_0 \pm t_i \in [A,A_0]$ for $i=1,2$.
\begin{figure}[t]
\centering
\ifpdf
\input{figures/irrationalFigureMediumFixed.pdftex_t}
\else
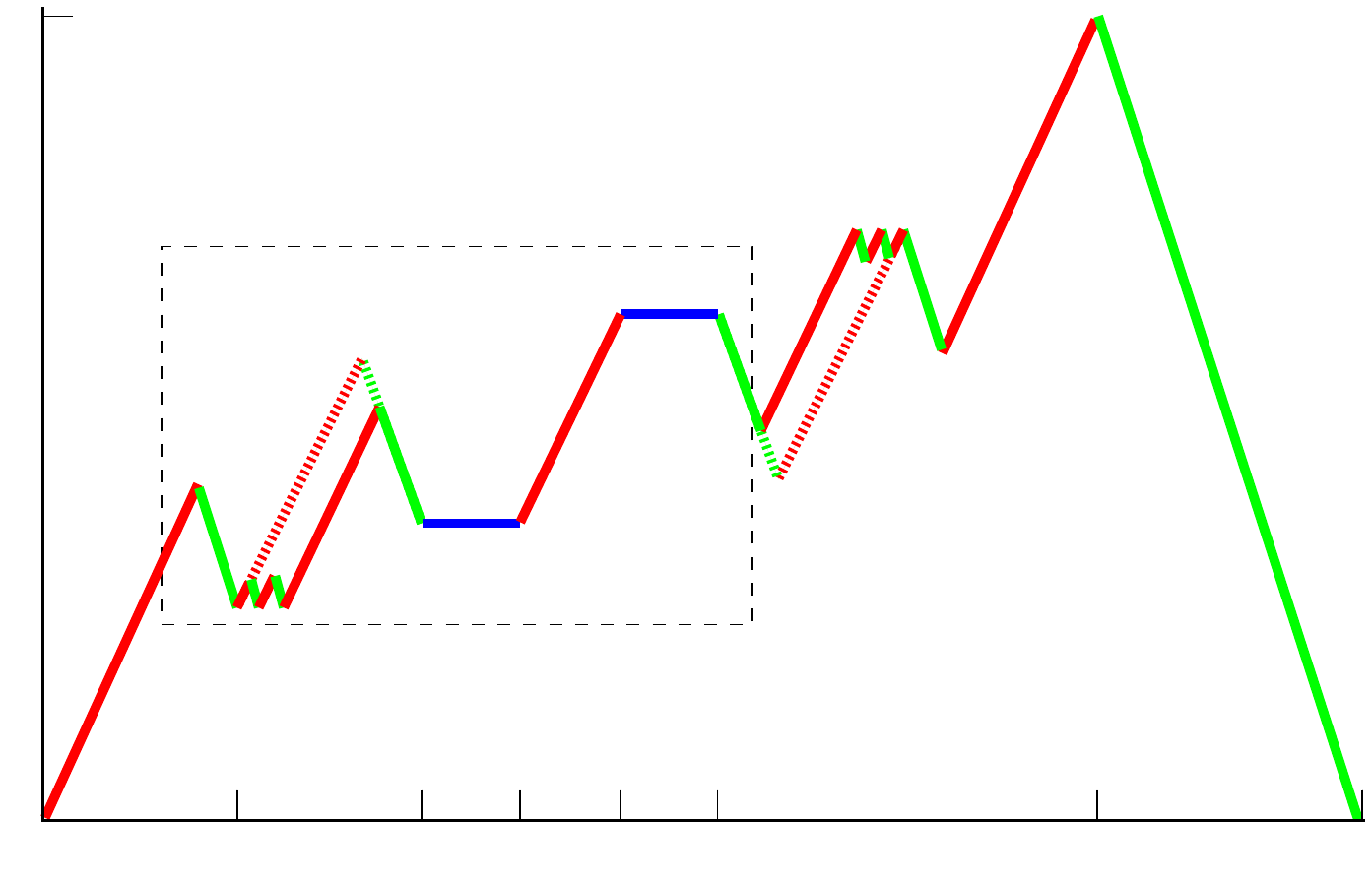
\fi
\caption{This function $\pi$ is extreme.  To show this, we use three
  translations that combine to create a dense set of relations that cause
  $\pi$ to be affine imposing where $\pi$ has slope $0$.  This argument
  crucially relies on the fact that one of the translations is irrational.  Using standard techniques, we show $\pi$ is affine imposing on all other intervals and then show that $\pi$ is the unique solutions to a system of equations.  Surprisingly, if all the points are rational, then this function is not extreme. 
We construct this function in two steps.  In Step 1, we describe the function
above with dotted lines.  In Step 2, we add on the two extra zig-zags.  See
Figure~\ref{figure:Zoom} for more details (dashed box) of this function.}
\label{newExtremeFunction}
\label{figure:irrationalFunction}
\end{figure}

\begin{figure}[t]
\centering
\ifpdf
\input{figures/irrationalFigureZoomArrows.pdftex_t}
\else
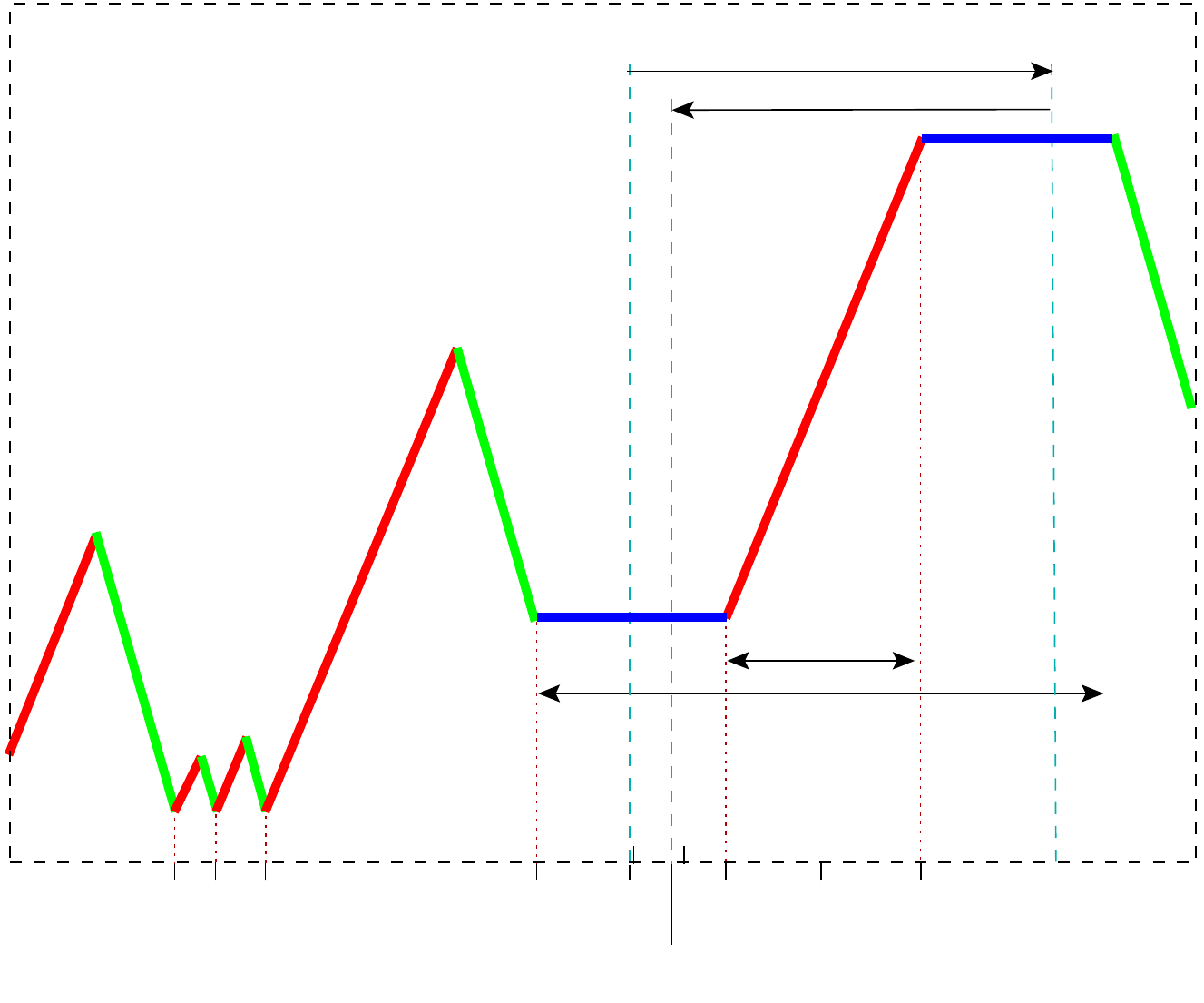
\fi
\caption{A zoomed in picture of Figure \ref{newExtremeFunction} to show in
  further detail the variables discussed in this section.  For our
  construction to work, we choose $a_i$ and $f - a_i$, for either $i=1$ or
  $i=2$, to be irrational numbers, and all other breakpoints to be rational.
  It follows that $t_1 = a_1 - a_0$ and $t_2 = a_2-a_0$ are linearly
  independent over $\Q$.
  The arrows indicate the action of the translations~$\tau_{a_1}^{}$,
  $\tau_{a_0}^{-1}$ and the reflection~$\rho_f$.
}
 \label{figure:Zoom}
\end{figure}

\smallbreak

Let $\Gamma = \langle \rho_f,\tau_{ a_0}, \tau_{a_1}, \tau_{a_2}\rangle$, as
defined in section \ref{section:Reflection-group-lemma},  and consider the
orbit $$X = \Gamma(x_0) = \{\,\gamma(x_0 )\st \gamma \in \Gamma\,\}.$$  
 From Lemma \ref{lemma:semidirect}, $\Gamma = \Tau \rtimes \langle \rho_f
 \rangle$ where $\Tau = \langle \tau_{a_0}, \tau_{a_1}, \tau_{a_2}\rangle
 = \langle  \tau_{a_0}^{},  \tau_{a_0}^{-1} \tau_{a_1}^{}, \tau_{a_0}^{-1}
 \tau_{a_2}^{}\rangle$.   
 Let $\tau_a \in \Tau$ be a translation and observe that
   $$
   \tau_{a_0}(\tau_{a}(x_0)) = a_0 +  (x_0 +  a) = (a_0 +  x_0) +  a = (f-x_0) +  a = f - (x_0 - a) = \rho_f^{}(\tau_{a}^{-1}(x_0)).
   $$
   Therefore, the translation $\tau_{a_0}$ is redundant in this orbit and we
   can rewrite $X$ with one fewer translation as $X = \{\, \gamma(x_0) \st
   \gamma \in \langle \rho_f, \tau_{a_0}^{-1} \tau_{a_1}^{}, \tau_{a_0}^{-1}
   \tau_{a_2}^{}\rangle\,\}$, or more simply as 
$$     
X = (x_0 + \Lambda) \cup (\rho_f(x_0) +\Lambda)
$$
where $\Lambda = {\langle t_1, t_2 \rangle}_\Z$.  The key element here is that $\Lambda$ is dense in $\R$ because $t_1, t_2$ are linearly independent over $\Q$.

For the same reason, there is bijection from $x_0 + \Lambda$ to $\Z^2$ as $x_0 + \lambda_1 t_1 + \lambda_2 t_2 \mapsto (\lambda_1, \lambda_2)$. 
Let $\ell\colon x_0 + \Lambda \to \mathbb{N}$ by $x \mapsto |\lambda_1| + |\lambda_2|$.  This map is well defined because of the bijection between $x_0 + \Lambda$ and $\Z^2$. 

 Recall now that if $\pi$ is a minimal valid function and $\pi =
 \tfrac{1}{2}(\pi^1 + \pi^2)$ with valid functions~$\pi^1,\pi^2$, then by
 Lemma~\ref{lem:minimality-of-pi1-pi2}, $\pi, \pi^1, \pi^2$ are minimal as
 well. Furthermore, from Lemma~\ref{lem:tightness}, if $\pi(x) +
 \pi(y) = \pi(x + y)$, then $\pi^i(x)+ \pi^i(y) = \pi^i(x + y)$ for $i=1,2$.
 Thus the difference $\bar\pi = \pi^1 - \pi$ satisfies also $\bar \pi(x)
 +\bar\pi(y) = \bar\pi(x + y)$.

\begin{lemma}
\label{lemma:barpiShift}
Suppose that for all $x \in [A, A_i]$, $\bar \pi(a_i) + \bar \pi(x) = \bar \pi(a_i + x)$ for $i=0,1,2$.
If $x = x_{0} + \lambda_1 t_1 + \lambda_2 t_2 \in (x_0 + \Lambda) \cap [A, A_0]$ with $\lambda_1, \lambda_2 \in \Z$, then 
$$
\bar \pi(x) - \bar \pi(x_0) = \lambda_1 \left(\bar \pi({a_1}) - \bar \pi({a_0})\right) + \lambda_2 \left(\bar \pi({a_2}) - \bar \pi({a_0})\right).
$$
\end{lemma}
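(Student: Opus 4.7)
The plan is to prove the claim by induction on $\ell(x) = |\lambda_1| + |\lambda_2|$. I first record an \emph{elementary step identity}: whenever $x, x + t_i \in [A, A_0]$ for some $i \in \{1,2\}$, then
$$
\bar\pi(x + t_i) - \bar\pi(x) = \bar\pi(a_i) - \bar\pi(a_0).
$$
Indeed, Assumption~\ref{assum:requirements}(iii) gives $A_i = A_0 - t_i$ (since both $a_i+A_i$ and $a_0+A_0$ equal $f-A$), so the hypothesis $x, x+t_i \in [A,A_0]$ is equivalent to $x \in [A, A_i]$. Then the additivity relations for indices $i$ (at argument $x$) and $0$ (at argument $x+t_i$) apply, and the identity $a_0 + (x + t_i) = a_i + x$ yields the claim. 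The analogous identity with $-t_i$ follows by a shift.

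The base case $\ell(x) = 0$ is trivial: then $x = x_0$, and both sides vanish. For the inductive step with $\ell(x) \geq 1$, I will find $i \in \{1,2\}$ and $\varepsilon \in \{\pm1\}$ such that $x' = x + \varepsilon t_i \in [A, A_0]$ and $\ell(x') = \ell(x) - 1$; applying the inductive hypothesis at $x'$ together with the elementary step identity then telescopes into the desired formula, giving total coefficient $\lambda_1$ in front of $\bar\pi(a_1)-\bar\pi(a_0)$ and $\lambda_2$ in front of $\bar\pi(a_2)-\bar\pi(a_0)$.

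The main technical ingredient is the strict metric inequality $t_1 + t_2 < A_0 - A$, which I derive from Assumption~\ref{assum:requirements}(iii): the chain $A_1 > A_2 \geq (A + A_0)/2$ yields $t_1 < t_2 \leq (A_0 - A)/2$, hence $t_1 + t_2 < 2t_2 \leq A_0 - A$. The existence of the desired reducing neighbor is then handled by cases on the signs of $\lambda_1, \lambda_2$. If both are nonnegative (not both zero), the reducing candidates are $x - t_i$ for each $i$ with $\lambda_i > 0$; if all such candidates fail, then $x < A + \min\{t_i : \lambda_i > 0\} \leq A + (A_0 - A)/2 = x_0$, contradicting $x - x_0 = \lambda_1 t_1 + \lambda_2 t_2 > 0$. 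The both-nonpositive case is symmetric. In the mixed case $\lambda_1 > 0 > \lambda_2$, the two reducing candidates are $x - t_1$ and $x + t_2$; both failing forces $A_0 - t_2 < x < A + t_1$, i.e., $A_0 - A < t_1 + t_2$, contradicting the key inequality. The case $\lambda_1 < 0 < \lambda_2$ is symmetric.

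The main obstacle is ensuring the discrete walk stays inside $[A, A_0]$ while monotonically decreasing $\ell$; this is precisely what the sign-based case analysis handles, and it is the place where the geometric hypothesis $A_2 \geq (A + A_0)/2$ does real work. Once the reduction is justified, the induction proceeds mechanically.
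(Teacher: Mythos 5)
Your proof is correct and takes essentially the same route as the paper: induction on $\ell(x)$, with the one-step identity obtained by subtracting the additivity relations for $a_i$ and $a_0$ via $a_0+(x+t_i)=a_i+x$, exactly as in the paper's argument. The only difference is bookkeeping for keeping the walk inside $[A,A_0]$ --- the paper picks the reducing step according to whether $x>x_0$ or $x<x_0$, so that membership follows directly from $t_i\le\tfrac{A_0-A}{2}$, whereas you case on the sign pattern of $(\lambda_1,\lambda_2)$ and additionally use $t_1+t_2<A_0-A$ in the mixed-sign case; both verifications are valid.
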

\begin{proof}
Our proof will be by induction on $\ell(x)$.
For $\ell(x) = 0$, we have $\lambda_1 = \lambda_2 = 0$, thus $x = x_{0}$ and the result is trivial.

Now suppose $ \ell(x) > 0$.  We will show the result for $x \in (x_0,A_0]$ as the proof for $x \in [A, x_0)$ is similar.  Since $x > x_0$, we must have $\lambda_i > 0$ for either $i=1$ or $i=2$.  Without loss of generality, let $i=1$.  Note that $x \in (x_0,A_0] \subset [A,A_0]$, and therefore 
$$
\bpi(a_0) + \bpi(x) = \bpi(x + a_0).
$$
Consider the point $x - t_1 = x + a_0 - a_1 = x - A_0 + A_1$.  First, since $x \leq A_0$,  we have $x - t_1 \leq A_0 - A_0 + A_1 = A_1$.   
 Second,  since $x > x_0$, w have $x - t_1 > x_0 - t_1 \geq A$.  Therefore $x - t_1 \in [A, A_1]$ and 
$$
\bpi(a_1) + \bpi(x - t_1) = \bpi(x + a_0).
$$
Subtracting these two equations and rearranging terms yields 
$$
\bpi(x)  - \bpi(x-t_1)= \bpi(a_1) - \bpi(a_0).
$$ 
Since $x-t_1 \in [A,A_0]$ and $\ell(x-t_1) = |\lambda_1 - 1| + |\lambda_2| = |\lambda_1|-1 + |\lambda_2| = \ell(x) -1$,  the induction hypothesis holds for $x - t_1$ and hence
$$
\bar \pi(x-t_1)  - \bpi(x_0)= (\lambda_1-1) (\bar \pi(a_1) - \bar \pi({a_0})) + \lambda_2 (\bar \pi({a_2}) - \bar \pi({a_0})).
$$
Therefore
\begin{align*}
\bar \pi(x) - \bar \pi(x_0) 
&=   \big(\bpi(x) - \bar \pi(x-t_1)  \big)+ \big(\bar \pi(x-t_1) - \bpi(x_0)\big)\\
&= \big(\bpi({a_1}) - \bpi({a_0})  \big) + \big((\lambda_1-1) (\bar \pi({a_1}) - \bar \pi({a_0})) + \lambda_2 (\bar \pi({a_2}) - \bar \pi({a_0}))\big)  \\
&= \lambda_1 \left(\bar \pi({a_1}) - \bar \pi({a_0})\right) + \lambda_2 \left(\bar \pi({a_2}) - \bar \pi({a_0})\right).
\end{align*}
As stated before, the proof for $x \in [A,x_0)$ is similar and is done by supposing $\lambda_1 < 0$ and considering the point $x + a_1 - a_0 = x - t_1$.  The calculations are very similar. 
\end{proof}

\subsection{Construction}
We now give a precise definition of the function in Figure \ref{figure:irrationalFunction}, and then apply the above lemma.  This function will have three slopes, $c_1, c_2, c_3$, where we choose $c_2 = 0$.  The construction will be done in two steps.  We leave this construction general because, although we will only show it is extreme for one choice of parameters, it is indeed extreme for many choices. \smallbreak

\noindent\textbf{Step 1.}  We will determine variables $d_j^i$, signifying the $i^{\text{th}}$ interval of slope $c_j$.  The intervals written in order have the following lengths:
$$
d_1^1, d_3^1, d_1^2,  d_3^2, d_2^1, d_1^3, d_2^2,  d_3^3,  d_1^4, d_3^4, d_1^5, 1-f,
$$
where $a_0 = d_1^1 + d_3^1$, $A = a_0 + d_1^2 + d_3^2$, $A_0 = A + d_2^1$, and $f-A = A + d_2^1 + d_1^3$.  Note that the interval $[A, A_0]$ has slope $c_2 = 0$ in this notation.

We begin by picking $f \in (0,1)$.  On the interval $[f,1]$, the function will have only slope $c_3$.  We divide up the length of the interval $[0,f]$ into lengths $d_1, d_2, d_3$ that will be the amount of $[0,f]$ with slopes $c_1, c_2, c_3$, respectively. So $d_1 + d_2 + d_3 = f$.  Therefore we have the following equations:
\begin{align*}
c_1 = \frac{1 - d_2c_2 -  d_3 c_3}{d_1}, \ \ c_2 = 0,  \  \ c_3 = \frac{-1}{1-f},\ \ \ 
d_1 +  d_2 + d_3 = f.
\end{align*}
Now we subdivide each of these lengths into smaller lengths.  Inside $[0,f]$, we want 5 intervals with slope $c_1$, 2 intervals with slope $c_2$ and 4 intervals with slope $c_3$.  Therefore we give ourselves the following positive variables
\begin{align*}
d_1^1 + d_1^2 + d_1^3 + d_1^4 + d_1^5 = d_1,\\
d_2^1 + d_2^2 = d_2,\\
d_3^1 + d_3^2 + d_3^3 + d_3^4 = d_3.
\end{align*}
To preserve symmetry in the function, we restrict ourselves to 
\begin{align*}
d_1^1 = d_1^5, \ \ \
d_1^2 = d_1^4, \ \ \
d_2^1 = d_2^2, \ \ \
d_3^1 =  d_3^4, \ \ \
d_3^2 = d_3^3.
\end{align*}
We choose our parameters such that 
$$\pi(a_0) + \pi(A) = \pi(f-A) \ \  \ \text{ and }\ \ \ a_0 + A = f-A,$$
 which gives the two equations
\begin{align*}
d_1^1 + d_3^1 = d_2^1 + d_1^3,\ \ \ \ \ 
d_1^1 c_1 + d_3^1 c_3 = d_2^1 c_2 + d_1^3 c_3.
\end{align*}
By choosing values for the variables $f, d_1, d_2$, and $d_1^1 + d_3^1$, the remaining variables can be determined  uniquely.
\smallskip

\noindent\textbf{Step 2.}  We will create $2$ more additivity equations: 
$$\pi(a_1) + \pi(A) = \pi(a_1 + A)\ \ \  \text{  and  }\ \ \  \pi(a_2) + \pi(A) = \pi(A+ a_2).$$ 
For this, we pick positive $\delta^1, \delta^{2}$ such that the sum of these
values is less than $d_2^1/2$.   We set $t_1 = \delta^1$,  $t_2 = \delta^1 +
\delta^2$, $a_1 = a_0 + t_1$ and $a_2 = a_0 + t_2$.  For each $\delta^i$, find
$\delta^i_1, \delta^i_3$ such that $\delta^i_1 + \delta^i_3 = \delta^i$ and
$c_1 \delta^i_1 + c_3 \delta^i_3 = 0$.   With this, we force $\pi(a_0) =
\pi(a_1) = \pi(a_2)$, and the desired additivity relations follow.

Now we adjust some previous parameters to make room for new zig-zags:
\begin{align*}
\tilde d_1^2 = d_1^2 - \delta_1^1 - \delta_1^{2},\ \ \
\tilde d_1^4 = d_1^4 - \delta_1^1 -  \delta_1^{2},\ \ \
\tilde d_3^2 = d_3^2 - \delta_3^1 - \delta_3^{2},\ \ \
\tilde d_3^3 = d_3^3 - \delta_3^1 - \delta_3^{2}.
\end{align*}
Let $e_3 = 1-f$.
Now the intervals written in order have the following lengths:
$$
d_1^1, d_3^1, \delta_1^1, \delta_3^1, \delta_1^{2}, \delta_3^{2}, \tilde d_1^2, \tilde d_3^2, d_2^1, d_1^3, d_2^2, \tilde d_3^3, \tilde d_1^4,
\delta_3^{2}, \delta_1^{2}, \dots, \delta_3^{1}, \delta_1^{1},
d_3^4, d_1^5, e_3,
$$
where each interval has slope corresponding to the subscript.  Let $t_1 = \delta^1, t_2 = \delta^1  + \delta^2$, and let $a_1 = a_0 + t_1$, $a_2 = a_0 + t_2$.  The parameters $\delta^1, \delta^2$ will be chosen such that $t_1, t_2$ are linearly independent over $\Q$.

\subsection{Proof of extremality in the irrational case}

\begin{theorem}\label{thm:irrational-extreme}
Let $f = 4/5,  d_1 = 3/5, d_3 = 1/10$, $d_1^1   + d_3^1 = 15/100$, $\delta^1 = 1/200, \delta^2 = \sqrt{2}/200$ and let $\pi$ be the function given by these parameters and the construction above.  Then $\pi$ is extreme.
\end{theorem}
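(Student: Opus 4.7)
Suppose $\pi = \frac{1}{2}(\pi^1+\pi^2)$ for valid functions $\pi^1,\pi^2$, and let $\bar\pi = \pi^1 - \pi$. By Lemma~\ref{lem:minimality-of-pi1-pi2}, both $\pi^1,\pi^2$ are minimal, hence bounded between $0$ and $1$, so $|\bar\pi|\le 1$. Since $\pi$ is piecewise linear and thus Lipschitz, Theorem~\ref{Theorem:functionContinuous} shows that $\pi^1,\pi^2$ and hence $\bar\pi$ are continuous. Subtracting the symmetry relation $\pi^i(x)+\pi^i(f-x)=1$ from its $\pi$-analogue yields $\bar\pi(f-x) = -\bar\pi(x)$; in particular $\bar\pi(0)=\bar\pi(f)=0$. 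Moreover, every additivity relation tight for $\pi$ passes to $\bar\pi$ (Lemma~\ref{lem:tightness}), so the translation equations $\bar\pi(a_i)+\bar\pi(x)=\bar\pi(a_i+x)$ for $x\in[A,A_i]$ and $i=0,1,2$ hold. The goal is to show $\bar\pi\equiv 0$.

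The heart of the argument is a new \emph{irrational equivariance principle} applied on the slope-zero interval $[A,A_0]$. By Lemma~\ref{lemma:barpiShift}, on the dense subset $(x_0+\Lambda)\cap[A,A_0]$ we have
\begin{displaymath}
  \bar\pi(x_0+\lambda_1 t_1+\lambda_2 t_2)-\bar\pi(x_0)=\lambda_1\alpha+\lambda_2\beta,
\end{displaymath}
where $\alpha=\bar\pi(a_1)-\bar\pi(a_0)$ and $\beta=\bar\pi(a_2)-\bar\pi(a_0)$. Because $t_2/t_1=\sqrt 2$ is irrational, the lattice points $(\lambda_1,\lambda_2)\in\Z^2$ satisfying $\lambda_1 t_1+\lambda_2 t_2\in[A-x_0,A_0-x_0]$ form an unbounded strip in $\Z^2$ parallel to the irrational line $\lambda_1 t_1+\lambda_2 t_2=0$. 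Along this strip, $\lambda_1\alpha+\lambda_2\beta$ is bounded if and only if $(\alpha,\beta)$ is parallel to $(t_1,t_2)$; otherwise $\bar\pi$ would be unbounded, contradicting $|\bar\pi|\le 1$. Hence $(\alpha,\beta)=c(t_1,t_2)$ for some $c\in\R$, which gives $\bar\pi(x)-\bar\pi(x_0)=c(x-x_0)$ on the dense set, and by continuity on all of $[A,A_0]$. Symmetrically (via $\bar\pi(f-x)=-\bar\pi(x)$), $\bar\pi$ is affine on $\rho_f([A,A_0])=[f-A_0,f-A]$ with slope $c$ as well.

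On every other $1$-face of the underlying complex $\P_q$, affine behavior of $\bar\pi$ follows from the standard machinery of section~\ref{section:AI}: the three-slope shape of $\pi$ together with the extra zig-zags of Step~2 supplies $2$-face additivity regions (with $\Delta\pi=0$ on the interior) that cover the remaining intervals via Corollary~\ref{lemma:affine-imposing-by-patching}, and the graph~$\G$ of Definition~\ref{def:graph} connects these covered intervals to the rest through the translation/reflection edges induced by the relations at $a_0,a_1,a_2$ and $f$. By Theorem~\ref{theorem:AI} the slope of $\bar\pi$ is constant along each connected component of~$\G$. Therefore $\bar\pi$ is piecewise affine on the same complex as~$\pi$, determined by one scalar per connected component.

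The final step is purely linear-algebraic: writing the system~\eqref{equation:system} for the chosen breakpoint set together with the derived affine-on-$[A,A_0]$ constraint, and plugging in the specific values $f=4/5$, $d_1=3/5$, $d_3=1/10$, $d_1^1+d_3^1=15/100$, one checks that $\bar\pi(0)=\bar\pi(f)=0$, the odd-symmetry relations, and the tight additivities $\pi(a_i)+\pi(A)=\pi(f-A_i)$ together with the zig-zag additivities force every slope parameter of $\bar\pi$, and hence $\bar\pi$ itself, to vanish. Hence $\pi^1=\pi=\pi^2$ and $\pi$ is extreme. The main obstacle is the irrational argument of the second paragraph: making rigorous that an unbounded lattice strip in a direction of irrational slope forces $(\alpha,\beta)\parallel(t_1,t_2)$, and then carefully assembling the remaining affine slopes through $\G$ so that the finite linear system really has only the trivial solution for this choice of parameters.
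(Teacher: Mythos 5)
Your skeleton matches the paper's (perturbation $\bar\pi=\pi^1-\pi$, continuity via Theorem~\ref{Theorem:functionContinuous}, Lemma~\ref{lemma:barpiShift} on the dense orbit, symmetry to transfer to $f-[A,A_0]$, then a finite linear system), but your treatment of the decisive irrationality step is genuinely different. The paper first pins down $\bar\pi$ \emph{outside} the slope-zero intervals: applying the Interval Lemma with patches anchored at $[0,d_1^1]$ (resp.\ at $[f,1]$) it shows $\bar\pi$ has one common slope $\bar c_1$ on all slope-$c_1$ intervals and slope $0$ on all slope-$c_3$ intervals, and then, integrating $\bar\pi'$ from $0$ to $a_i$, computes $\bar\pi(a_i)-\bar\pi(a_0)=\bar c_1(1-c_1/c_3)\,t_i$ for $i=1,2$; so the proportionality of your $(\alpha,\beta)$ to $(t_1,t_2)$ is obtained by explicit computation, and Lemma~\ref{lemma:barpiShift}, density of $\Lambda$, and continuity then give affinity on $[A,A_0]$. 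You instead get the proportionality from boundedness: $|\bar\pi|\le1$, while the lattice points $(\lambda_1,\lambda_2)$ with $x_0+\lambda_1t_1+\lambda_2t_2\in[A,A_0]$ have bounded component along $(t_1,t_2)$ but are infinite in number (density, since $t_2/t_1\notin\Q$), hence unbounded across it, forcing $\alpha t_2=\beta t_1$. This is correct and can be made rigorous exactly as you indicate; it is arguably cleaner in that it uses nothing about $\bar\pi$ away from $[A,A_0]$ beyond boundedness and the hypotheses of Lemma~\ref{lemma:barpiShift}, whereas the paper's route additionally identifies the exact slope of $\bar\pi$ on $[A,A_0]$ (information it never uses).

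Two points need repair before this is a complete proof. First, for the remaining intervals you invoke the machinery of section~\ref{section:AI} (the complex $\P_q$, covered intervals, the graph~$\G$ of Definition~\ref{def:graph}, Theorem~\ref{theorem:AI}); that entire section operates under the standing assumption that the breakpoints are rational with common denominator~$q$, which fails here since $a_1,a_2,A_1,A_2$ are irrational. You must either rerun the patching argument on the general complex $\P_\B$, or, as the paper does, bypass the graph altogether: for each slope-$c_1$ interval apply the Interval Lemma to $U\times[0,\epsilon]$ with $[0,\epsilon]\subset[0,d_1^1]$, and anchor the slope-$c_3$ intervals at $[f,1]$; this simultaneously gives the common-slope statements you need (one unknown slope per slope class, and $c_3$ already forced to $-1/(1-f)$ by $\theta(f)=1$, $\theta(1)=0$), which is what makes the final system finite with only three unknowns. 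Second, the concluding uniqueness claim is asserted rather than verified, yet this is precisely where the numerical parameters enter: the paper reduces to the $3\times3$ system coming from $\pi(f)=1$, $\pi(1)=0$, and $\pi(a_0)+\pi(A)=\pi(f-A)$ and checks nonsingularity for the given data (unique solution $c_1=5/2$, $c_2=0$, $c_3=-5$), whence $\pi^1=\pi^2=\pi$; some such explicit check must be carried out, whether in your \eqref{equation:system} formulation or the paper's reduced one. Finally, record (as the paper does via Theorem~\ref{minimality-check}) that $\pi$ is minimal before using Lemma~\ref{lem:tightness}, the symmetry relation, and Lemma~\ref{lemma:barpiShift}.
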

\begin{proof}
Using Theorem~\ref{minimality-check}, it can be shown that $\pi$ is a
minimal function.  Furthermore, these parameters satisfy Assumption~\ref{assum:requirements} and $\pi$ satisfies the hypotheses of Lemma \ref{lemma:barpiShift}.
Let $\pi^1, \pi^2$ be minimal valid functions such that $\pi = \frac12(\pi^1 + \pi^2)$.  From Theorem \ref{Theorem:functionContinuous}, it follows that $\bar\pi = \pi^1 - \pi$ is continuous, and since $\pi, \pi^1, \pi^2$ are minimal, $\bar \pi(0) = 0$.  

Let $I\in \I_{B,\edge}$ be an interval with slope $c_1$ and let $x \in
\intr(I)$.  Since $\intr(I)$ is open, there exists an $\epsilon > 0$ such that
$U = [x - \epsilon/2, x + \epsilon/2] \subset \intr(I)$ and $V = [0,\epsilon]
\subset [0,d_1^1]$.   Since $\pi(u) + \pi(v) = \pi(u+v)$ for all $u \in U,
v\in V$, by the Interval Lemma, $\pi$ is affine imposing in $U,V,U+V$, 
and $\bar\pi$ has the same slopes on~$U,V,U+V$.  
Repeating this for every point $x$ where $\pi$ has
slope $c_1$ shows that $\pi$ is affine imposing in all intervals where it has slope
$c_1$ and $\bar\pi$ has the same slope, say $\bar c_1$, in these intervals. 
In other words, $\bar \pi'(x) = \bar c_1$ for all
$x$ such that $\pi'(x) = c_1$. 

 Similarly, we find that $\pi$ is affine imposing in all intervals with slope
 $c_3$, and the function~$\bar\pi$ has the same slopes in
 these intervals.  Furthermore, since $\pi$ has slope $c_3$ on the interval $[f,1]$ and $\theta(f) = 1$, $\theta(1) = 0$ for $\theta = \pi,\pi^1$, the slope $c_3$ is fixed for each function to $c_3 = \smash[b]{\tfrac{-1}{1-f}}$.  Therefore $\bar \pi'(x) = 0$ for all $x$ such that $\pi'(x) = c_3$.

 Since $\bar \pi$ is a continuous piecewise linear function and $\bar \pi(0) = 0$, $\bar \pi(x) = \int_0^x \bar \pi'(t)\,\mathrm{d}t$, where the finitely many values of $t$ where $\bar \pi'$ does not exist are ignored.  The following calculations stem from the facts that $\bar \pi'(x) = 0$ wherever $\pi'(x) = c_3$ and that $\pi'$ on $[0, a_2]$ is either of slope $c_1$ or $c_3$.  Therefore, 
\begin{align*}
\bar\pi(a_1) - \bar \pi(a_0) &= \bar c_1 \delta_1^1,\\
\bar\pi(a_2) - \bar \pi(a_0) &= \bar c_1 (\delta_1^1 + \delta_1^2).
\end{align*}
Note that $\delta_1^i  = (1 - c_1/c_3) \delta^i$.  Let $\alpha = 1-c_1/c_3$.  Let $t_1 = \delta^1$ and $t_2 = \delta^1 + \delta^2$. Then
\begin{align*}
\bar\pi(a_1) - \bar \pi(a_0) = \bar c_1 \alpha t_1,\\
\bar\pi(a_2) - \bar \pi(a_0) = \bar c_1 \alpha t_2.
\end{align*}

Recall that $[A, A_0]$ is the first interval with slope $c_2=0$ and let $x \in (x_0 + \Lambda) \cap [A, A_0]$.  By Lemma \ref{lemma:barpiShift}, we have
\begin{align*}
\bar \pi(x) - \bar \pi(x_0) &=\lambda_1 (\bar \pi({a_1}) - \bar \pi({a_0}) + \lambda_2 (\bar \pi({a_2}) - \bar \pi({a_0}))\\
&=   \lambda_1 \bar c_1 \alpha t_1 + \lambda_2 \bar c_1 \alpha t_2\\
&= \alpha \bar c_1 (\lambda_1 t_1 + \lambda_2 t_2)\\
& =\alpha \bar c_1(x - x_0).
\end{align*}
That is 
$$
\bar \pi(x) = \bar \pi(x_0) + \alpha \bar c_1 (x - x_0)
$$
and therefore, since $\Lambda$ is dense in $\R$, $\bar \pi(x)$ is affine on a
dense set in $[A, A_0]$.  Since $\bar \pi $ is continuous and $\bar \pi$ is
affine on all of $[A, A_0]$,  the function  $\pi$ is affine imposing in $[A, A_0]$.  Since $[A, A_0]$ and $f-[A, A_0]$ are connected via the symmetry reflection, $\pi$ is also affine imposing in $f-[A, A_0]$ and these intervals must have the same slope.

Since $\pi$ is continuous with three slopes, we can set up a system of equations on three slopes that is satisfied by $\pi, \pi^1, \pi^2$.   
We will demonstrate this system has a unique solution using just the equations $\pi(f) = 1$, $\pi(1) = 0$, and $\pi(a_0) + \pi(A) = \pi(f-A)$ where $a_0,A,f-A$ are defined above.  These equations yield the following system of equations, which is invariant with respect to $\delta^1, \delta^{2}$:

\begin{equation}
\begin{bmatrix}
d_1 & d_2 & d_3 \\
d_1 & d_2 & d_3 + e_3\\
d_1^1- d_1^3 & - d_2^1 &  d_3^1
\end{bmatrix}
\begin{bmatrix}
c_1 \\ c_2 \\ c_3
\end{bmatrix}
 =
 \begin{bmatrix}
 1 \\ 0 \\ 0
 \end{bmatrix}.
\end{equation}
Plugging the parameters into the system of equations, we have the matrix equation
\begin{equation}
\begin{bmatrix}
3/5 & 1/10 & 1/10\\
3/5 & 1/10& 3/10\\
2/15 - 1/10 & - 1/20 &  1/60
\end{bmatrix}
\begin{bmatrix}
c_1 \\ c_2 \\ c_3
\end{bmatrix}
 =
 \begin{bmatrix}
 1 \\ 0 \\ 0
 \end{bmatrix}.
\end{equation}
This system has a unique solution, which is $c_1 = 5/2, c_2 = 0, c_3 = -5$.  Therefore, $\pi = \pi^1 = \pi^2$ and $\pi$ is extreme.
\end{proof}

\subsection{Proof of non-extremality in the rational case}

\begin{theorem}\label{thm:rational-non-extreme}
Let $f = 4/5,  d_1 = 3/5, d_3 = 1/10$, $d_1^1   + d_3^1 = 15/100$.  Under the construction above, these values uniquely determine the values
$
d_1^1, d_3^1, d_1^2,  d_3^2, d_2^1, d_1^3, d_2^2,  d_3^3,  d_1^4, d_3^4, d_1^5.$   Let $0<\delta^1, \delta^2\in\mathbb{Q}$ be any rational numbers such that $\delta^1 +\delta^2 < d^2_1 + d^2_2$ and let $\pi$ be the function given by these parameters and the construction above.  Then $\pi$ is not extreme.
\end{theorem}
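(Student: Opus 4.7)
The plan is to apply Lemma~\ref{lemma:not-extreme} by exhibiting an interval in $\Iedge$ whose equivalence class lies outside $\S^2_{q,\edge}$. Since all parameters in the construction are now rational, every breakpoint of $\pi$ is rational; let $q \in \N$ denote their least common denominator, so that $\pi$ may be viewed as piecewise linear on $\P_q$. The natural candidate is $I^* = [A, A_0]$, the first interval on which $\pi$ has slope $c_2 = 0$.

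From the proof of Theorem~\ref{thm:irrational-extreme}, $\pi$ has the three slopes $c_1 = 5/2$, $c_2 = 0$, $c_3 = -5$, and the compatibility $c_i + c_j = c_k$ with $c_i, c_j, c_k \in \{c_1, c_2, c_3\}$ holds only for the triples $(c_1, 0, c_1)$, $(0, 0, 0)$, and $(0, c_3, c_3)$. Any two-dimensional face $F \in \Delta\P_q$ on which $\Delta\pi_F \equiv 0$ on $\relint(F)$ must therefore have the slopes of its three defining linear pieces drawn from one of these triples. Direct inspection of $\Delta\P_q$ for this specific $\pi$ shows that the projections of such two-dimensional additivity faces are intervals of slope $c_1$ or $c_3$, and do not include either of the slope-$0$ intervals $[A, A_0]$ or $[f - A_0, f - A]$.

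Next, one traces the connected component $\G_{I^*}$ in the graph $\G$ of Definition~\ref{def:graph}. The edges incident to $[I^*]$ arise from the additivity relations $\pi(a_i) + \pi(x) = \pi(a_i + x)$ for $x \in [A, A_i]$ and $i = 0, 1, 2$, together with the symmetry relation $\pi(x) + \pi(f - x) = 1$. Because $t_1 = a_1 - a_0$ and $t_2 = a_2 - a_0$ are now rational, the subgroup $\Lambda = \langle t_1, t_2 \rangle_\Z \subseteq \R$ is discrete, so only finitely many translates $I^* + \lambda_1 t_1 + \lambda_2 t_2$ fit within the admissible region; together with their images under $\rho_f$, these produce a connected component $\G_{I^*}$ consisting entirely of slope-$0$ intervals. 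Combining with the previous paragraph, $\G_{I^*}$ contains no interval in $\I^2_{q,\edge}$, so $[I^*] \notin \S^2_{q,\edge}$, and Lemma~\ref{lemma:not-extreme} yields that $\pi$ is not extreme.

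The main obstacle is the connectivity step: one must verify that no graph edge from a slope-$0$ interval ever reaches a covered interval, which requires a case analysis accounting for the additivity relations produced by the Step-2 zig-zags (whose slopes are $c_1$ and $c_3$ and whose interaction with the slope-$0$ region must be ruled out). An equivalent and more constructive route bypasses this verification by building the perturbation explicitly: with $\Lambda = \tfrac{1}{q'}\Z$ discrete, Lemma~\ref{lemma:phi} produces a nontrivial continuous $\Gamma$-equivariant function~$\psi$ (for $\Gamma = \langle \rho_f, \tau_{a_0}, \tau_{a_1}, \tau_{a_2}\rangle$), and restricting $\psi$ to the union of intervals in $\G_{I^*}$ gives a perturbation $\bar\pi$ such that $\pi \pm \epsilon \bar\pi$ are both minimal valid functions for sufficiently small $\epsilon > 0$, directly witnessing non-extremality in the spirit of the proof of Lemma~\ref{lemma:not-extreme}.
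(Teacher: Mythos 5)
Your overall strategy is the paper's: show that the slope-$0$ intervals do not lie in $\S^2_{q,\edge}$ and invoke Lemma~\ref{lemma:not-extreme}. But both steps that carry the actual content are left unproved. First, your criterion for a two-dimensional additive face --- that the three slopes satisfy $c_i + c_j = c_k$ --- is not the right condition: writing $\Delta\pi_F(x,y) = (m_I - m_K)x + (m_J - m_K)y + (b_I + b_J - b_K)$ shows that $\Delta\pi_F \equiv 0$ on a two-dimensional face forces $m_I = m_J = m_K$ (this is exactly Lemma~\ref{lemma:patching-interval-lemma}\,(ii)). Hence the only way a slope-$0$ interval could be covered is by a $2$-face all three of whose projections have slope~$0$, and ruling that out is precisely the arithmetic heart of the theorem --- the step you dismiss with ``direct inspection.'' The paper does it by computing the values of $\pi$ on the two slope-$0$ intervals: symmetry gives $\pi(f/2)=1/2$, whence $\pi$ takes the constant value $3/8$ on $[A,A_0]$ and $5/8$ on $f-[A,A_0]$; since both values are $\equiv \tfrac18 \pmod{\tfrac14}$, any expression $\pi(x)+\pi(y)-\pi(x+y)$ with all three points in slope-$0$ intervals is $\equiv \tfrac18 \pmod{\tfrac14}$ and in particular nonzero, so no such additive face exists. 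Without this (or an equivalent) computation, your claim that the slope-$0$ intervals are uncovered is unsupported.

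Second, the connectivity step that you flag as ``the main obstacle'' and leave to an unspecified case analysis is in fact immediate from Lemma~\ref{lemma:pointLemma} applied with $\theta = \pi$: every edge of $\G$ joins intervals on which $\pi$ has the same slope, so slope-$0$ intervals are connected only to slope-$0$ intervals, and combined with the first point no slope-$0$ equivalence class lies in $\S^2_{q,\edge}$. There is no need to enumerate the component $\G_{I^*}$, to use discreteness of $\Lambda$, or to analyze the zig-zag relations. Your alternative ``constructive route'' via Lemma~\ref{lemma:phi} does not repair the gaps either: to verify that $\pi \pm \epsilon\bar\pi$ is minimal one must check \emph{all} additivity relations of $\pi$ (all vertices of the refined complex where $\Delta\pi$ vanishes), not only those generated by $\Gamma = \langle \rho_f,\tau_{a_0},\tau_{a_1},\tau_{a_2}\rangle$; that case analysis is exactly what the proof of Lemma~\ref{lemma:not-extreme} carries out once and for all, which is why the paper concludes by establishing $\S^2_{q,\edge} \neq \Iedge$ and citing that lemma rather than rebuilding the perturbation.
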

\begin{proof}
  Since the choice of $\delta^1, \delta^2$ does not affect the slope on the
  intervals, we know from the proof of Theorem~\ref{thm:irrational-extreme} that the
  slopes of $\pi$ are $c_1 = 5/2, c_2 = 0, c_3 = -5$.  
  By the construction, the function $\pi$ takes slope $c_2=0$ only on two
  intervals, $[A,A_0]$ and $f-[A,A_0]$, which are symmetric about the point
  $f/2$.  Between them is the interval $[A_0, f-A_0]$ of length $d^3_1 =
  1/10$, on which $\pi$ has slope $c_1 = 5/2$.   The midpoint of this interval
  is $f/2$.  Since $\pi$ satisfies the symmetry condition, $\pi(f/2) = 1/2$.  
  We thus find
  \begin{subequations}
    \label{eq:pi-flat-values}
    \begin{align}
      \pi(x) &= \frac12 - \frac12 \cdot \frac1{10} \cdot
      \frac52 = \frac12 - \frac18 = \frac38 \quad \text{for} \quad x\in[A,A_0]\\
      \intertext{and}
      \pi(x) &= \frac12 +  \frac12 \cdot \frac1{10} \cdot \frac52 = \frac12 +
      \frac18 = \frac58 \quad \text{for} \quad x\in f-[A,A_0]
    \end{align}
  \end{subequations}
  as the values of~$\pi$ on the two slope-$0$ intervals.
  
  Since all the input parameters were chosen to be rational, there exists a
  number $q \in \Z_+$ such that the breakpoints of $\pi$ are all in $\frac1q
  \Z$.  We consider the graph $\G = \G(\Iedge/\Z,\E)$ introduced in
  section~\ref{section:AI}.  By Lemma~\ref{lemma:pointLemma}, if the
  equivalence classes $\EqClass{I}, \EqClass{J}$ of two
  intervals $I,J\in \Iedge$ are connected by a path in~$\E$, then $\pi$ has
  the same slope on $I$ and~$J$.  Let $\mathcal Z \subseteq \Iedge$ be the set
  of all intervals where $\pi$ has slope $c_2 = 0$. Then, in particular,
  the intervals $I\in \mathcal Z$ are connected only to other intervals
  in~$\mathcal Z$. 

  Now we will show that none of the intervals in~$\mathcal Z$ is covered,
  i.e., $\mathcal Z \cap \P^2_{q,\edge} = \emptyset$.
  Suppose $I\in\mathcal Z$ is covered. 
  By Lemma~\ref{lemma:patching-interval-lemma}, 
  there exists a two-dimensional face $F \in \Delta\P_q$ 
  with 
  \begin{equation}\label{eq:delta-pi-is-zero}
    \Delta\pi(x,y) = \pi(x) + \pi(y) - \pi(x+
    y) = 0 \text{ for all } (x,y) \in \intr(F).
  \end{equation}
  such that $I$ is one of the intervals $I_1=p_1(F)$, $I_2=p_2(F)$, or
  $I_3=p_3(F)$, and $\pi$ has 
  the same slope on $I_1$, $I_2$, $I_3$.  Thus $\pi$ has slope~$0$ on 
  $I_1$, $I_2$, $I_3$. 
  By~\eqref{eq:pi-flat-values}, however, $\pi(x), \pi(y), \pi(x + y)
  \equiv \frac18 \pmod{\frac14}$ and thus $\Delta\pi(x,y)
  \equiv\frac18\pmod{\frac14}$ for $(x,y) \in \relint(F)$. This is a contradiction
  to~\eqref{eq:delta-pi-is-zero}. 
  
  Thus we have showed that none of the equivalence classes of the intervals in~$\mathcal Z$ is
  connected by a path in~$\G$ to the equivalence class of a covered interval. 
  Therefore $\S^2_{q,\edge} \neq \Iedge$.
  Hence, by Lemma~\ref{lemma:not-extreme}, $\pi$ is not extreme.
\end{proof}

Theorems \ref{thm:irrational-extreme} and \ref{thm:rational-non-extreme} show
that a piecewise linear function with irrational breakpoints can be extreme,
even if any nearby piecewise linear function with rational breakpoints is not
extreme.  This may suggest that an algorithm for deciding the extremality of
functions with rational breakpoints that is oblivious to the sizes of the
denominators of the breakpoints is not possible.

\providecommand\ISBN{ISBN }
\bibliographystyle{../amsabbrvurl}
\bibliography{../bib/MLFCB_bib}

\end{document}